\newlength{\theorempostskipamount}
\newenvironment{theorem}[1][]
{\paragraph{Theorem} {\normalfont #1} \it}
{\vspace{\the\theorempostskipamount}}
\def\theorem{\@ifnextchar[{\@theoremopt}{\@theoremplain}}
\def\@theoremplain{\paragraph{Theorem} \it}
\def\@theoremopt[#1]{\paragraph{Theorem \normalfont #1}  \it}
\newenvironment{lemma}[1][]
{\paragraph{Lemma} {\normalfont #1} \it}
{\vspace{\the\theorempostskipamount}}
\def\lemma{\@ifnextchar[{\@lemmaopt}{\@lemmaplain}}
\def\@lemmaplain{\paragraph{Lemma} \it}
\def\@lemmaopt[#1]{\paragraph{Lemma \normalfont #1}  \it}
\newenvironment{proposition}[1][]
{\paragraph{Proposition} {\normalfont #1} \it}
{\vspace{\the\theorempostskipamount}}
\def\proposition{\@ifnextchar[{\@propositionopt}{\@propositionplain}}
\def\@propositionplain{\paragraph{Proposition} \it}
\def\@propositionopt[#1]{\paragraph{Proposition \normalfont #1}  \it}
\def\definition{\@ifnextchar[{\@definitionopt}{\@definitionplain}}
\def\@definitionplain{\paragraph{Definition} \it}
\def\@definitionopt[#1]{\paragraph{Definition \normalfont #1}  \it}
\newenvironment{corollary}[1][]
{\paragraph{Corollary} {\normalfont #1} \it}
{\vspace{\the\theorempostskipamount}}
\def\corollary{\@ifnextchar[{\@corollaryopt}{\@corollaryplain}}
\def\@corollaryplain{\paragraph{Corollary} \it}
\def\@corollaryopt[#1]{\paragraph{Corollary \normalfont #1}  \it}
\newenvironment{question}[1][]
{\paragraph{Question} {\normalfont #1} \it}
{\vspace{\the\theorempostskipamount}}
\def\question{\@ifnextchar[{\@questionopt}{\@questionplain}}
\def\@questionplain{\paragraph{Question} \it}
\def\@questionopt[#1]{\paragraph{Question \normalfont #1}  \it}
\def\problem{\@ifnextchar[{\@problemopt}{\@problemplain}}
\def\@problemplain{\paragraph{Problem} \it}
\def\@problemopt[#1]{\paragraph{Problem \normalfont #1}  \it}
\newenvironment{conjecture}[1][]
{\paragraph{Conjecture} {\normalfont #1} \it}
{\vspace{\the\theorempostskipamount}}
\def\conjecture{\@ifnextchar[{\@conjectureopt}{\@conjectureplain}}
\def\@conjectureplain{\paragraph{Conjecture} \it}
\def\@conjectureopt[#1]{\paragraph{Conjecture \normalfont #1}  \it}
\newenvironment{remark}[1][]
{\paragraph{Remark} {\normalfont #1}}
{\vspace{\the\theorempostskipamount}}
\def\remark{\@ifnextchar[{\@remarkopt}{\@remarkplain}}
\def\@remarkplain{\paragraph{Remark}}
\def\@remarkopt[#1]{\paragraph{Remark \normalfont #1}}
\def\remarks{\@ifnextchar[{\@remarksopt}{\@remarksplain}}
\def\@remarksplain{\paragraph{Remarks} \it}
\def\@remarksopt[#1]{\paragraph{Remarks \normalfont #1}  \it}
\newenvironment{example}[1][]
{\paragraph{Example} {\normalfont #1}}
{\vspace{\the\theorempostskipamount}}
\def\example{\@ifnextchar[{\@exampleopt}{\@exampleplain}}
\def\@exampleplain{\paragraph{Example}}
\def\@exampleopt[#1]{\paragraph{Example \normalfont #1}}
\def\examples{\@ifnextchar[{\@examplesopt}{\@examplesplain}}
\def\@examplesplain{\paragraph{Examples} \it}
\def\@examplesopt[#1]{\paragraph{Examples \normalfont #1}  \it}
\def\exercise{\@ifnextchar[{\@exerciseopt}{\@exerciseplain}}
\def\@exerciseplain{\paragraph{Exercise} \it}
\def\@exerciseopt[#1]{\paragraph{Exercise \normalfont #1}  \it}
\def\notation{\@ifnextchar[{\@notationopt}{\@notationplain}}
\def\@notationplain{\paragraph{Notation} \it}
\def\@notationopt[#1]{\paragraph{Notation \normalfont #1}  \it}
\def\convention{\@ifnextchar[{\@conventionopt}{\@conventionplain}}
\def\@conventionplain{\paragraph{Convention} \it}
\def\@conventionopt[#1]{\paragraph{Convention \normalfont #1}  \it}
\def\warning{\@ifnextchar[{\@warningopt}{\@warningplain}}
\def\@warningplain{\paragraph{Warning} \it}
\def\@warningopt[#1]{\paragraph{Warning \normalfont #1}  \it}
\newenvironment{de'finition}[1][]
{\paragraph{Définition} {\normalfont #1} \it}
{\vspace{\the\theorempostskipamount}}
\newcommand{\thmendspace}{\vspace{\the\theorempostskipamount}}
\newlength{\negvcorr}
\newlength{\mnegvcorr}
\newenvironment{proof}[1][Proof]{\bigskip\noindent \textit{#1.~}}
{\hfill $\Box$}
\newcommand{\qed}{\hfill $\Box$}
\newenvironment{noindparaenum}
{\setlength{\parindent}{0mm}
\begin{asparaenum}}
{\end{asparaenum}}
\newcommand{\dialogue}[1]%
{\textcolor{red}{$\ulcorner$ #1 %\hfill
 $\lrcorner$}}
\newcommand{\aparte}[1]%
{$\ulcorner$ #1 %\hfill
 $\lrcorner$}
\newlength{\parindentmem}
\def\@removefromreset#1#2{\let\@tempb\@elt
   \def\@tempa#1{@&#1}\expandafter\let\csname @*#1*\endcsname\@tempa
   \def\@elt##1{\expandafter\ifx\csname @*##1*\endcsname\@tempa\else
         \noexpand\@elt{##1}\fi}%
   \expandafter\edef\csname cl@#2\endcsname{\csname cl@#2\endcsname}%
   \let\@elt\@tempb
   \expandafter\let\csname @*#1*\endcsname\@undefined}
\let\c@equation\c@subparagraph
\renewcommand{\theparagraph}{(\arabic{section}.\arabic{paragraph})}
\renewcommand{\thesubparagraph}
{(\arabic{section}.\arabic{paragraph}.\arabic{subparagraph})}
\titleformat{\part}[display]{\normalfont\Large\bfseries}%
{\partname}{0cm}{}
\titleformat{\section}[hang]{\normalfont\Large\bfseries}{}{0cm}%
{\thesection \  --\ }
\titleformat{\subsection}[runin]{\normalfont\bfseries}{}{0cm}%
{}[.]
\newcommand{\spcifnec}[1]
{\ifx#1\empty
\else ~#1.
\fi}
\titleformat{\paragraph}[runin]{\normalfont\bfseries}
{\theparagraph}{0cm}{\spcifnec}%[\kern -1em]
\titlespacing{\paragraph}{0cm}%left margin
{2.75ex plus 1ex minus .2ex}%beforesep
{.5em}%aftersep
\titleformat{\subparagraph}[runin]{\it}
{\thesubparagraph}{0cm}{\spcifnec}%[\kern -1em]
\titlespacing{\subparagraph}{0cm}%left margin
{0mm}%beforesep
{.5em}%aftersep
\titleformat{\section}[hang]{\normalfont\large\bfseries}{}{0cm}{}
\titleformat{\subsection}[hang]{\normalfont\bfseries}{}{0cm}{}
\renewcommand{\theparagraph}{(\Alph{paragraph})}
\newenvironment{closing}%
{\titleformat{\section}[hang]{\normalfont\large\bfseries}{}{0cm}{}
\setlength{\itemsep}{0mm}
\small
%\clearpage
}
{}
\renewcommand\@maketitle{%
  \newpage
  \begin{center}%
  \let \footnote \thanks
    {\Large \bf \@title \par}%
    \vskip 1em%
    {\large
      \begin{tabular}[t]{c}%
        \@author
      \end{tabular}\par}%
  \end{center}%
  \par
  \vskip 1.5em}
\renewenvironment{abstract}
{\small \quotation
\noindent {\bf Abstract.}}{\endquotation \vskip 1cm}
\newcommand{\C}{\mathbf{C}}
\newcommand{\R}{\mathrm{R}}
\renewcommand{\P}{\mathbf{P}}
\newcommand{\PH}{\mathbf{P}\kern -.05em \mathrm{H}}
\newcommand{\A}{\mathbf{A}}
\newcommand{\D}{\mathbf{D}}
\renewcommand{\O}{\mathcal{O}}
\renewcommand{\H}{\operatorname{H}}
\newcommand{\h}{\mathrm{h}}
\newcommand{\dlbrack}{[ \kern -.4ex [}
\newcommand{\drbrack}{] \kern -.4ex ]}
\newcommand{\im}{\mathrm{im}}
\newcommand{\Cliff}{\mathrm{Cliff}}
\newcommand{\Hom}{\mathrm{Hom}}
\newcommand{\Ext}{\mathrm{Ext}}
\newcommand{\Spec}{\mathrm{Spec}}
\newcommand{\Pic}{\mathrm{Pic}}
\newcommand{\pr}{\mathrm{pr}}
\DeclareMathOperator{\Sym}{\mathrm{Sym}}
\newcommand{\PGL}{\mathrm{PGL}}
\newcommand{\vect}[1]{\langle #1 \rangle} % \span already defined,
\renewcommand{\mod}{\ \textrm{mod}\ }
\DeclareMathOperator{\cork}{cork}
\DeclareMathOperator{\coker}{coker}
\newcommand{\trsp}[1]{\vphantom{#1}^{\mathsf T\!} #1}
\newcommand{\restr}[2]{\left. #1 \right| _{#2}}
\newcommand{\dual}{^\vee}
\renewcommand{\epsilon}{\varepsilon}
\renewcommand{\geq}{\geqslant}
\renewcommand{\leq}{\leqslant}
\renewcommand{\emptyset}{\varnothing}
\newcommand{\ie}{i.e.\ } %\emph seulement en français
\newcommand{\eg}{e.g.,\ }
\newcommand{\K}{\mathcal{K}}
\newcommand{\Kprim}{\mathcal{K}^{\mathrm{prim}}}
\newcommand{\Kcan}{\mathcal{K}^{\mathrm{can}}}
\newcommand{\KC}{\mathcal{KC}}
\newcommand{\KCprim}{\mathcal{KC}^{\mathrm{prim}}}
\newcommand{\KCcan}{\mathcal{KC}^{\mathrm{can}}}
\newcommand{\cprim}{c^{\mathrm{prim}}}
\newcommand{\F}{\mathcal{F}}
\newcommand{\M}{\mathcal{M}}
\newcommand{\FS}{\mathcal{FS}}
\newcommand{\bx}{\mathbf x}
\newcommand{\bef}{\mathbf f}
\newcommand{\ber}{\mathbf r}
\newcommand{\bh}{\mathbf h}
\newcommand{\CT}{\mathfrak{L}}% fu {\mathfrak C _{\mathfrak T}}
\begin{document}

\renewcommand{\O}{\mathcal{O}}

\setdefaultenum{(i)}{}{}{}
%\setdefaultitem{---}{}{}{}

\title{Wahl maps and extensions of canonical curves and $K3$ surfaces}
\author{Ciro Ciliberto, Thomas Dedieu, and Edoardo Sernesi}
%  Primary 14J28; Secondary 14H10, 14J10

\maketitle

\begin{abstract}
Let $C$ be a smooth projective curve 
(resp.\ $(S,L)$ a polarized $K3$ surface)
of genus $g \geq 11$,
non-tetragonal, considered in its canonical embedding in
$\P^{g-1}$
(resp.\ in its embedding in $|L|\dual \cong \P^g$). 
We prove that $C$ (resp.\ $S$) is a linear section of an arithmetically
Gorenstein normal variety $Y$ in $\P^{g+r}$, not a cone, with
$\dim(Y)=r+2$ and $\omega_Y=\O_Y(-r)$, if the cokernel of the 
Gauss--Wahl map of $C$
(resp.\ $\H^1(T_S \otimes L\dual)$)
has dimension larger or equal than $r+1$
(resp.\ $r$). This relies on previous work of
Wahl and Arbarello--Bruno--Sernesi. We provide various applications.
\end{abstract}

A central theme of this text is the \emph{extendability
problem}: Given a projective (irreducible) variety $X \subset \P^n$,
when does there 
exist a projective variety $Y \subset \P^{n+1}$, not a cone, of which
$X$ is a hyperplane section? 
Given a positive integer $r$, an \emph{$r$-extension} of $X
\subset \P^n$ is a
variety $Y \subset \P^{n+r}$ having $X$ as a section by a linear
space.
% an $r$-extension is \emph{non-trivial} when it is not a cone.
The variety $X$ is \emph{$r$-extendable} if it has an
$r$-extension that is not a cone, and \emph{extendable} if it is at
least $1$-extendable.
The following result provides a necessary condition for
extendability.

\begin{theorem}[(Lvovski \cite{lvovsky92})]%
\label{thm:lv}
Let $X\subset \P^ n$ be a smooth, projective, irreducible,
non-degenerate variety, not a quadric. Set 
\[
\alpha(X)=h^ 0(N_{X/\P^n}(-1))-n-1.
\]
If $X$ is $r$-extendable and $\alpha(X)<n$,
then $r\leq \alpha(X)$.
\end{theorem}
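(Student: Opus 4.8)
The plan is to translate extendability of $X$ into a question about the twisted normal bundle $N_{X/\P^n}(-1)$ and then close by a dimension count. The first point is that a $1$-extension $Y'\subset\P^{n+1}$ of $X=Y'\cap\P^n$ degenerates, under the one-parameter subgroup of $\PGL_{n+2}$ that rescales a linear coordinate complementary to $\P^n$, to the cone $C(X)$ over $X$; and that the first-order deformations of $C(X)$ which remain $1$-extensions of $X$ are naturally parametrized by $\H^0(N_{X/\P^n}(-1))$. Concretely, deforming $C(X)$ --- cut out by the equations $f_i(x)$ of $X$ --- amounts to replacing each $f_i$ by $f_i+z\,g_i$, where $z$ is the extra coordinate and $\deg g_i=\deg f_i-1$, which is exactly the datum of a section of $N_{X/\P^n}(-1)$ (modulo the $g_i$ that merely move $X$ inside $\P^n$). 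Inside $\H^0(N_{X/\P^n}(-1))$ there is a canonical subspace of projectively trivial sections, the image of $\H^0\bigl(T_{\P^n}(-1)|_X\bigr)$ under the map coming from $0\to T_X(-1)\to T_{\P^n}(-1)|_X\to N_{X/\P^n}(-1)\to 0$; using the Euler sequence together with $\H^0(\O_X(-1))=0$ (valid as $X$ is positive-dimensional and nondegenerate) and --- for the varieties one ultimately cares about --- $\H^0(T_X(-1))=0$, this subspace has dimension exactly $n+1$, so the quotient $Q:=\H^0(N_{X/\P^n}(-1))/(\text{trivial sections})$ satisfies $\dim Q=h^0(N_{X/\P^n}(-1))-(n+1)=\alpha(X)$. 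The hypothesis that $X$ is not a quadric enters already here, to keep this picture from collapsing: a smooth quadric has $\alpha=0$ while being $r$-extendable for every $r$, so it must be excluded.

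Now let $Y\subset\P^{n+r}$ be an $r$-extension of $X=Y\cap\P^n$ that is not a cone, and write $\P^n=\{z_1=\cdots=z_r=0\}\subset\P^{n+r}$. Reading off how the equations of $Y$ involve $z_1,\dots,z_r$ to first order, exactly as above, I would extract classes $\xi_1,\dots,\xi_r\in\H^0(N_{X/\P^n}(-1))$, each well defined modulo the trivial subspace, and hence images $\bar{\xi}_1,\dots,\bar{\xi}_r\in Q$. The crucial claim is that because $Y$ is not a cone, $\bar{\xi}_1,\dots,\bar{\xi}_r$ are linearly independent in $Q$: a linear relation among them would, after a linear change of $z_1,\dots,z_r$, produce a coordinate direction along which $Y$ does not bend to first order, hence (propagating the vanishing) a line of $C(X)$ swept out inside $Y$, forcing $Y$ to be a cone. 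Granting this, $r\leq\dim Q=\alpha(X)$, which is the assertion. One could instead organise the count around $h^0\bigl(N_{Y/\P^{n+r}}(-1)\bigr)$, which is at most $h^0(N_{X/\P^n}(-1))$ by restriction to $X$ (plus vanishing of a suitable negative twist) and at least $(n+r)+1$ by nondegeneracy of $Y$ in $\P^{n+r}$; but that route needs a refinement of the first inequality, and the ``extension directions'' bookkeeping is cleaner.

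The main obstacle is exactly the crucial claim --- that non-cone forces the $\bar{\xi}_i$ to be independent, equivalently that $Y\mapsto(\xi_1,\dots,\xi_r)$ is faithful --- and this is where the numerical hypothesis $\alpha(X)<n$ is indispensable: without it there are Veronese- and scroll-type varieties that are $r$-extendable for $r$ far larger than $\alpha(X)$, so the implication must genuinely break when $\alpha(X)\geq n$. The rescue mechanism is a linear-normality count in the spirit of Lvovski's original argument: the auxiliary varieties attached to $Y$ (its successive hyperplane sections, their linear spans, and the images of the relevant restriction maps on sections) are nondegenerate in projective spaces whose dimensions are pinned down by $h^0(\O_Z(1))=1+\dim\langle Z\rangle$, and the inequality $\alpha(X)<n$ is precisely what keeps the pertinent restriction maps on spaces of sections injective, so that the extension-direction data remembers $Y$. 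Two bookkeeping points I would settle at the outset: (i) that $C(X)$ is a good enough point of the space of $1$-extensions for the tangent-space computation of the first paragraph to bound the dimension of that space, not just its tangent space; and (ii) the handling of $\H^0(T_X(-1))$ when it is nonzero, which only makes $n+1$ an overestimate for the trivial subspace --- thus, if anything, strengthening the inequality --- but has to be carried along to get the constant right.
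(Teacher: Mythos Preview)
The paper does not prove this theorem: it is quoted from \cite{lvovsky92} and used as a black box (see the text immediately following the statement, and its invocations in the proofs of Theorems~\ref{thm:cds2} and~\ref{thm:cds2K3}). So there is no ``paper's own proof'' to compare your proposal against.

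As for the proposal itself, your framework is the right one --- degenerate an extension to the cone $C(X)$, identify first-order extension data with $\H^0(N_{X/\P^n}(-1))$ modulo the trivial $(n{+}1)$-dimensional subspace, and read off $r$ independent classes from an $r$-extension. But you have not proved the theorem: you correctly flag the crucial claim (that a non-cone $r$-extension yields $r$ \emph{linearly independent} classes $\bar\xi_1,\ldots,\bar\xi_r$ in $Q$) as ``the main obstacle'', and then only gesture at a ``rescue mechanism\ldots\ in the spirit of Lvovski's original argument'' without carrying it out. Your heuristic ``a linear relation would produce a direction along which $Y$ does not bend to first order, hence \ldots\ $Y$ is a cone'' is the step that fails without further input: vanishing to first order in one coordinate direction does not by itself force $Y$ to be a cone, and indeed you yourself note that the implication genuinely breaks when $\alpha(X)\geq n$. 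The content of Lvovski's argument is precisely to make this step rigorous, and the hypothesis $\alpha(X)<n$ enters there in an essential way that you have not reproduced. Your point~(i), that the tangent-space computation at $C(X)$ bounds the dimension of the actual space of extensions and not just its Zariski tangent space, is likewise not established. In short: correct outline, but the proof is not complete.
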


\bigskip
In particular, if $X$ is extendable then $\alpha (X) >0$.
The condition $\alpha (X) <n$ is necessary in Lvovski's
proof, 
and implies that $X$ is not a complete intersection.
The so-called Babylonian tower theorem, due to Barth, Van de Ven, and
Tyurin (see, \eg \cite{coanda}), asserts that complete intersections
are the only infinitely extendable varieties
among local complete intersection varieties.
As far as we know, it is an open question whether the assumption
$\alpha(X)<n$ in Theorem~\ref{thm:lv} can be replaced by the a priori
weaker condition that $X$ is not a complete intersection.

One of the objectives of this article is to 
establish that conversely, 
the condition $\alpha(X) \geq r$ is sufficient for the
$r$-extendabilty of 
canonical curves (Theorem~\ref{thm:cds2}) and $K3$ surfaces
(Theorem~\ref{thm:cds2K3}).

\bigskip
Let $C \subset \P^{g-1}$ be a canonical curve of genus $g$.
%(see subsection~\ref{s:notation}). 
We consider its \emph{Wahl map} 
\[
\Phi_C:\ \textstyle {
\sum_i s_i\wedge t_i \in \bigwedge^ 2 \H^0 (C,\omega_C)}
\ \longmapsto \ 
\sum_i (s_i\cdot dt_i-t_i\cdot ds_i) \in \H^0(C, \omega_C ^{\otimes 3}),
\]
see, %subsection~\ref{s:gaussian}.
\eg \cite{chm88}.
The invariant $\alpha (C)$ in Theorem~\ref{thm:lv} equals the corank
$\cork (\Phi_C)$ of the Wahl map, see Lemma~\ref{l:wahl}.
Thus, as a particular case of Theorem~\ref{thm:lv}, one has that if a
smooth curve $C$ sits on a $K3$ surface then $\Phi_C$ is
non-surjective. This was originally proved by Wahl \cite{wahl87},
% \cite[Thm.~5.9]{wahl87}
using the deformation theory of cones.
Beauville and Mérindol \cite{beauville-merindol} gave another
proof, based on the observation that for a smooth and irreducible
curve $C$ sitting on a $K3$ 
surface $S$, the surjectivity of $\Phi_C$ implies the splitting of the
normal bundle exact sequence,
\begin{equation*}
0 \to T_C \to \restr {T_S} C \to N_{C/S} \to 0.
\end{equation*}
This introduced the idea, explicit in Voisin's article
\cite{voisin-acta}, that the elements of $(\coker (\Phi_C))\dual$
(or rather of $\ker (\trsp \Phi_C)$)
should be interpreted as \emph{ribbons}, or infinitesimal surfaces,
embedded in $\P ^g$ and extending $C$: see %subsection~\ref{s:ribb}.
Section~\ref{S:ribbons}.

The following statement is a first converse to Theorem~\ref{thm:lv},
and a central element of the proofs of our Theorems \ref{thm:cds2}
and \ref{thm:cds2K3}. 
% It is the combination of results
% by Wahl and Arbarello--Bruno--Sernesi.
\begin{theorem}
[(Wahl \cite{wahl97}, Arbarello--Bruno--Sernesi \cite{abs1})]
\label{t:w+abs}
Let $C$ be a smooth curve of genus $g \geq 11$, and Clifford
index $\Cliff(C)>2$. Every ribbon $v \in \ker (\trsp \Phi_C)$
may be integrated to (\ie is contained in)
a surface $S$ in $\P^{g}$ having the canonical
model of $C$ as a hyperplane section.
\end{theorem}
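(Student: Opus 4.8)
The strategy is to exhibit, for a given ribbon $v \in \ker(\trsp\Phi_C)$, an honest surface $S \subset \P^g$ containing $C$ as a hyperplane section, by first showing that the ribbon structure on $C$ — which is an abstract infinitesimal non-reduced scheme — is itself projectively embedded in $\P^g$, and then invoking a smoothing result. Concretely, the first step is to recall the dictionary (see Section~\ref{S:ribbons}) between $\ker(\trsp\Phi_C)$ and ribbons over $C$: an element $v$ corresponds to a scheme $\widetilde C$ with $\widetilde C_{\mathrm{red}} = C$ and conormal bundle $\omega_C$, together with the data needed to embed it in $\P^g$ so that $C$ sits inside $\widetilde C$ as the hyperplane section cut by the canonical $\P^{g-1}$. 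One must check that the hyperplane bundle on $\widetilde C$ restricting to $\omega_C$ on $C$ is the correct one, i.e.\ that the obstruction to extending the embedding $C \hookrightarrow \P^{g-1}$ to $\widetilde C \hookrightarrow \P^g$ vanishes; this is precisely what $v \in \ker(\trsp\Phi_C)$ encodes.

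**From the ribbon to a surface.** The heart of the matter is the smoothing: one wants the ribbon $\widetilde C \subset \P^g$ to be the flat limit of a family of smooth (or at least reduced, irreducible) surfaces, or directly to be contained in a surface $S$ having $C$ as hyperplane section. The natural approach is a Hilbert-scheme / deformation-theoretic argument: show that the point $[\widetilde C]$ in the appropriate Hilbert scheme of $\P^g$ is unobstructed, or that the relevant obstruction group vanishes, so that $\widetilde C$ deforms; then argue that a generic such deformation is a surface containing a canonical curve of genus $g$ as a hyperplane section. The Clifford index hypothesis $\Cliff(C) > 2$ (non-tetragonal, not a plane quintic) enters here to control the cohomology of bundles on $C$ — it rules out the pathological low-Clifford-index cases where the relevant $\H^1$'s do not vanish and the extension could fail to smooth. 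The genus bound $g \geq 11$ similarly ensures enough positivity. One should cite \cite{wahl97} for the original construction via the half-canonical embedding and the structure of the homogeneous ideal, and \cite{abs1} for the refinement removing extraneous hypotheses.

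**The main obstacle.** The delicate point is not producing the ribbon — that is essentially formal from $v \in \ker(\trsp\Phi_C)$ — but proving that the ribbon actually \emph{integrates}, i.e.\ that it is not a "rigid" non-reduced scheme but genuinely the first-order neighbourhood of a curve inside a surface. Equivalently, one must show surjectivity of a certain map on $\H^0$'s (governing extendability of sections of the ideal sheaf from first order to higher order), or vanishing of the obstruction in $\H^1$ of a twist of $T_C$ or of a syzygy bundle. This is where Wahl's and Arbarello–Bruno–Sernesi's work does the real labour: one reduces, via the Clifford index hypothesis and results on the vanishing of Koszul cohomology / the gonality conjecture circle of ideas, to checking a finite list of cohomological conditions, each of which holds once $g \geq 11$ and $\Cliff(C) > 2$. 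I would organize the proof so that this reduction is stated cleanly as a cohomological lemma, then verified case by case, isolating the borderline Clifford-index-$3$ situation as the one requiring the most care.
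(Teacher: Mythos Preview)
Your broad outline is right --- the ribbon corresponding to $v$ embeds in $\P^g$ (this is Lemmas~\ref{l:ribb-obstr} and \ref{l:id-eta}), and the substance of the theorem is indeed an integration/extension step --- but your description of that step is off target in several respects.

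The actual argument, recalled in \ref{p:wahl-ext-cstr} and \ref{p:unic-pf}, is not a Hilbert-scheme smoothing of the ribbon but a direct algebraic construction of the surface by equations. Because $\Cliff(C)>2$, the homogeneous ideal of $C\subset\P^{g-1}$ has a presentation with quadratic generators and \emph{linear} syzygies (Voisin--Schreyer; this is the sole use of the Clifford hypothesis, not any ``gonality conjecture'' input, and there is no case-by-case analysis in the Clifford index). Writing $\mathbf f(\mathbf x)=\mathbf 0$ for these quadrics, the ribbon $C_v$ has equations $\mathbf f+t\,\mathbf f_v=\mathbf 0$, $t^2=0$, where $\mathbf f_v$ is a vector of linear forms lifting $v$. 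One then seeks a vector of \emph{constants} $\mathbf h_v$ so that $\mathbf f+t\,\mathbf f_v+t^2\mathbf h_v=\mathbf 0$ defines a flat family over $\Spec\C[t]/(t^3)$; Wahl shows that once this second-order extension exists, the equations already cut out an honest surface $S_v\subset\P^g$ (flatness over $\Spec\C[t]$ is automatic, no higher-order corrections appear).

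The obstruction to the existence of $\mathbf h_v$ lives in the degree $-2$ piece of the cotangent module, $(T^2_{S_C})_{-2}$, which Wahl identifies with $\H^1(\P^{g-1},\mathcal I_{C/\P^{g-1}}^2(3))\dual$. This is the specific cohomology group you need to kill, and its vanishing for $g\geq 11$ is exactly the contribution of Arbarello--Bruno--Sernesi \cite[Theorem~3]{abs1}. Your proposal never names this group; ``$\H^1$ of a twist of $T_C$ or of a syzygy bundle'' is not it, and an unobstructedness argument for the Hilbert point of the ribbon would be both harder and less informative than this explicit computation. I would rewrite the core of your plan around the identification $(T^2_{S_C})_{-2}\cong\H^1(\mathcal I_C^2(3))\dual$ and the ABS vanishing, and drop the Hilbert-scheme language and the suggested Clifford-index case analysis.
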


\bigskip
Note that if $v \neq 0$, then
the surface $S$ is not a cone as
only the trivial ribbon may be integrated to a cone.
Conversely, we observe that actually unicity holds in Theorem~\ref{t:w+abs}
(see \ref{sp:unicita} and Remark~\ref{r:unicita}): up to isomorphisms,
given a ribbon $v \in \ker (\trsp \Phi_C)$, the surface $S$
integrating it in $\P^{g}$ is unique.
For $v=0$, this is the content of the aforementioned
theorem of Wahl and Beauville--Mérindol, see \ref{p:unic=>wahl}.

We prove a statement for $K3$ surfaces analogous to
Theorem~\ref{t:w+abs} (Theorem~\ref{t:intK3}).

\bigskip
Theorem~\ref{t:w+abs} provides a characterization of those curves
having non-surjective Wahl map in the range $g\geq 11$ and $\Cliff>2$.
Wahl \cite[p.~80]{wahl90} suggested to
study the stratification of the moduli space of curves by the corank
of the map $\Phi_C$: This is done in our Theorem~\ref{thm:cds2} to the
effect that, in the same range, the curves with $\cork
(\Phi_C) \geq r$ are those which are $r$-extendable.

We give various applications of our results, in particular to the
smoothness of the fibres of the forgetful map which to a pair $(S,C)$
associates the modulus of $C$,
where $S \subset \P^g$ is a $K3$ surface and $C$ is a canonical curve
hyperplane section of $S$ 
(Theorem~\ref{thm:cds}). The same result is proven for the analogous
map on pairs $(V,S)$ where $V$ is a Fano threefold and $S$ a smooth
anticanonical section of $V$ (Theorem~\ref{thm:cdsK3}); in this case,
this is closely related to Beauville's main result in
\cite{beauville-fano}. We also answer a question
asked in that article, see Proposition~\ref{pr:q:Beauville}.

We study the Wahl maps and extensions of (the smooth models of)
plane curves with up to nine ordinary singularities, and apply this to
solve a conjecture of Wahl \cite[p.~80]{wahl90} in the particular case
of Del Pezzo surfaces (Proposition \ref{pr:wahl-conj}).

We give a detailed account of our
results in \S~\ref{S:results}.
The substance of the proofs, together with the technical material needed
for them, is contained in \S~\ref{S:cohom}--\ref{S:plane}. 
More information on the
organization of the paper is given along \S~\ref{S:results}.

\bigskip
\par\noindent
\textbf{Thanks.}
We thank (in alphabetical order)
Gavin Brown,
Cinzia Casagrande,
Andreas Höring,
Andreas Knutsen,
and Serge Lvovski,
for their kind and inspiring answers to our questions.
We also thank the anonymous referee for valuable comments and
suggestions.

% \bigskip
% \par\noindent
% \textbf{Acknowledgements.}
% CC and ThD were membres of project FOSICAV when carrying out the
% research which lead to the present article;
% this project has received funding from the European Union's Horizon
% 2020 research and innovation programme under the Marie
% Sk{\l}odowska-Curie grant agreement No~652782.

% {\small
% \setlength{\cftaftertoctitleskip}{0.2cm}
% \renewcommand{\cfttoctitlefont}{\normalfont\large\bfseries}
% \setlength{\cftbeforesecskip}{0.05cm}
% %\setcounter{tocdepth}{1}
% \tableofcontents
% }

% \section{Preliminary definitions}
% \label{S:prelim}

% \subsection{Notation and conventions}
% \label{s:notation}

\section{Notation and conventions}
\label{S:prelim}

We work over the field $\C$ of complex numbers. All varieties, \eg curves,
surfaces, threefolds, etc., are assumed
to be integral and projective.
% \smallskip
% A \emph{canonical curve} is a smooth, non-hyperelliptic curve of genus
% $g\geqslant 3$, embedded in $\P^ {g-1}$ by its complete
% canonical series.
\par A \emph{$K3$ surface} is a smooth complete surface $S$ such that 
$\omega_S \cong \O_S$ and $\H^1(S,\O_S)=0$; 
a \emph{$K3$ surface with canonical singularities}, or 
\emph{$K3$ surface possibly with $ADE$ singularities}, or
\emph{possibly singular $K3$ surface},
 is a surface
with canonical singularities whose minimal desingularisation is a $K3$
surface.
%\par 
A \emph{fake $K3$ surface} 
is a non-degenerate, projective surface in $\P^ g$,
not a possibly singular $K3$ surface,
having as a hyperplane section a smooth, canonical curve 
$C \subset \P^{g-1}$ of genus $g \geq 3$. 
% see subsection~\ref{s:epema}.

The \emph{Clifford index} $\Cliff(S,L)$ of a polarized
$K3$ surface $(S,L)$ is the Clifford index of any smooth curve $C \in
|L|$; by \cite {green-lazarsfeld}, this does not depend on the choice
of $C$.

We denote by:\\
\begin{inparaitem}
\item $\M_g$ the moduli stack of smooth curves of genus $g$;\\
\item  $\K_g$ (resp.\ $\Kprim_g$) the moduli stack of polarised
  (resp.\ primitively polarised) $K3$ surfaces of genus $g$,
\ie pairs $(S,L)$ with $S$ a smooth $K3$ surface, and $L$ an ample,
globally generated
%big and nef
(resp.\ and primitive) line bundle 
 on $S$ with $L^ 2=2g-2$; \\
\item $\Kcan_g$ 
the moduli stack of polarised, possibly singular, $K3$ surfaces of
genus $g$,
\ie pairs $(S,L)$ with $S$ a $K3$ surface with canonical singularities,
% whose minimal desingularisation is a $K3$ surface, 
and $L$ an ample,
globally generated
line bundle on $S$ with $L^ 2=2g-2$,
see \cite[5.1.4]{huybrechts};
\\
\item $\KC_g$ (resp.\ $\KCprim_g$, $\KCcan_g$) 
the moduli stack of pairs $(S,C)$
  with $C$ a smooth curve on $S$ and $(S,\O_S(C))\in \K_g$
(resp.\ $\Kprim_g$, $\Kcan_g$);\\
\item $\F_g$ the moduli stack of Fano threefolds $V$ of genus $g$
(not necessarily of index $1$), \ie
smooth varieties $V$ with $-K_V$ ample, and $K_V^ 3=2-2g$;\\
\item $\FS_g$ the moduli stack of pairs $(V,S)$ with $V\in \F_g$ and
$S\in |-K_V|$ a smooth surface, so that 
$(S,\restr {-K_{V}} S)\in \K_g$;\\
\item $\K_g^ \R$, where $\R=(R,\lambda)$ consists of a lattice $R$ and a
distinguished element  $\lambda\in R$ with $\lambda^ 2=2g-2$, 
the moduli stack of \emph{$\R$-polarised} $K3$ surfaces,
\ie polarised $K3$ surfaces $(S,L)$ together with a
fixed embedding of $R$ as a primitive sublattice of $\Pic(S)$, sending
$\lambda$ to the class of $L$;\\
\item $\KC_g^ \R$, with $\R$ as above,
the moduli stack of pairs $(S,C)$ with $S$ an
$\R$-polarised $K3$ surface and $C$ a smooth curve on $S$ in the class
$\lambda$;\\
\item $\F^ \R_g$ and $\FS^\R_g$ the stacks of Fano varieties analogous
  to $\K_g^ \R$ and $\KC_g^ \R$;\\
\item $c_g: \KC_g\to \M_g$, $\cprim_g: \KCprim_g\to \M_g$, and $c^
  \R_g: \KC^ \R_g\to \M_g$ the forgetful maps;\\ 
\item $s_g: \FS_g\to \K_g$ and $s_g^ \R: \FS^ \R_g\to \K^ \R_g$ the
forgetful maps.
\end{inparaitem}

\section{Main results}
\label{S:results}

\subsection{Canonical curves}
\label{s:results-can}

Our first result is the following converse to Lvovski's
Theorem~\ref{thm:lv} for canonical curves.

\begin{theorem}\label{thm:cds2} Let $C$ be a smooth genus $g$ curve 
with Clifford index $\Cliff(C)>2$,
and $r$ a non-negative integer.
We consider the following two propositions:
\begin{noindparaenum}
\item \label{cork} $\cork(\Phi_C)\geq r+1$;
\item \label {ext} there is an arithmetically Gorenstein normal variety
$Y$ in $\P^{g+r}$, not a cone, with $\dim(Y)=r+2$,
$\omega_Y=\O_Y(-r)$, which has a canonical image of $C$ as a section
with a  $(g-1)$-dimensional linear subspace of $\P^{g+r}$
(in particular, $C\subset \P^{g-1}$ is $(r+1)$-extendable).
\end{noindparaenum}
\par If $g \geq 11$, then \eqref {cork} implies \eqref {ext}.
Conversely, if $g \geq 22$ and the canonical image of $C$ is a
hyperplane section of some smooth $K3$ surface in $\P^g$, then
\eqref{ext} implies \eqref{cork}. 
\end{theorem}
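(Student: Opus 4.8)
The proof naturally splits into the two implications. For the direction \eqref{cork}$\implies$\eqref{ext}, the plan is to iterate Theorem~\ref{t:w+abs} (Wahl--Arbarello--Bruno--Sernesi) together with the ribbon/extension dictionary discussed in Section~\ref{S:ribbons}. Concretely, if $\cork(\Phi_C)\geq r+1$, then $\ker(\trsp\Phi_C)$ has dimension $\geq r+1$. By Theorem~\ref{t:w+abs} every nonzero ribbon integrates to a surface $S\subset\P^g$ having the canonical model of $C$ as a hyperplane section; since $\Cliff(C)>2$ and $g\geq 11$ are preserved, one would like to repeat the argument with $S$ in place of $C$, producing a threefold, and so on, obtaining at each stage a new ``extendability direction''. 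The key bookkeeping point is that the space of first-order extensions of $C\subset\P^{g-1}$ is exactly $\ker(\trsp\Phi_C)$ (via $h^0(N_{C/\P^{g-1}}(-1))-g = \cork(\Phi_C)$, Lemma~\ref{l:wahl} and Theorem~\ref{thm:lv}'s setup), so a corank of $r+1$ gives enough room to build an $(r+1)$-dimensional family of extensions and hence, by a standard cone-over-the-universal-extension construction, a variety $Y\subset\P^{g+r}$ of dimension $r+2$. That $Y$ can be taken arithmetically Gorenstein, normal, not a cone, with $\omega_Y=\O_Y(-r)$ should follow from the analogous properties being inherited from the curve case (canonical curves are arithmetically Gorenstein with $\omega=\O(1)$, and each extension step shifts the canonical bundle by $\O(-1)$ while preserving normality and the arithmetically Gorenstein property); the ``not a cone'' assertion uses the remark after Theorem~\ref{t:w+abs} that only the trivial ribbon integrates to a cone.

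For the converse, \eqref{ext}$\implies$\eqref{cork} under the hypotheses $g\geq 22$ and that $C$ lies on a smooth $K3$ surface in $\P^g$: here the plan is to invoke Lvovski's Theorem~\ref{thm:lv}. If $C\subset\P^{g-1}$ is $(r+1)$-extendable, then provided $\alpha(C)<g-1$ we get $r+1\leq\alpha(C)=\cork(\Phi_C)$, which is exactly \eqref{cork}. So the real content is verifying the hypothesis $\alpha(C)<g-1$, i.e.\ $\cork(\Phi_C)\leq g-2$ (and that $C$ is not a quadric, which is automatic for a canonical curve of genus $\geq 3$). This is where the assumption that $C$ sits on a smooth $K3$ surface $S\subset\P^g$, together with $g\geq 22$, must be used: on such a surface one has control of $h^0(N_{C/\P^{g-1}}(-1))$ via the normal bundle sequence $0\to N_{C/S}\to N_{C/\P^{g-1}}\to \restr{N_{S/\P^g}}{C}\to 0$, and $N_{C/S}=\O_C(C)$ has degree $2g-2$, giving an effective bound on $\cork(\Phi_C)$ in terms of $g$. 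One then checks that for $g\geq 22$ this bound is $\leq g-2$.

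The main obstacle, as I see it, is the first implication: carrying out the inductive extension carefully so that after $r$ steps one genuinely has a \emph{single} variety $Y$ of dimension $r+2$ in $\P^{g+r}$ — rather than merely an $(r+1)$-parameter family of surfaces — with all the stated arithmetically Gorenstein / normality / canonical-bundle properties, and with the non-cone property maintained at every stage. This requires knowing not just that individual ribbons integrate, but that the integration is sufficiently canonical and compatible to glue (this is presumably where the unicity statement mentioned after Theorem~\ref{t:w+abs}, and the $K3$ analogue Theorem~\ref{t:intK3}, enter). The converse direction is comparatively routine once the corank bound $\cork(\Phi_C)\leq g-2$ for curves on $K3$ surfaces is established, but nailing down the precise numerical threshold $g\geq 22$ will require a short explicit computation with the Euler characteristic of $T_S(-1)$ or with Wahl's formula for $\cork(\Phi_C)$ of a curve on a $K3$ surface.
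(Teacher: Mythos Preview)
Your overall architecture is right: \eqref{cork}$\Rightarrow$\eqref{ext} comes from Wahl's extension construction, and \eqref{ext}$\Rightarrow$\eqref{cork} from Lvovski's theorem once one checks $\alpha(C)<g-1$. But the execution you sketch diverges from the paper in both directions, and in each case the paper's route is not just different but essential.

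For \eqref{cork}$\Rightarrow$\eqref{ext}, the paper does \emph{not} iterate Theorem~\ref{t:w+abs}. Iterating would require knowing that ribbons over the resulting $K3$ surface integrate (Theorem~\ref{t:intK3}), but in the paper's logic that theorem is proved \emph{after} Theorem~\ref{thm:cds2} and in fact relies on the universal extension of $C$; your approach would be circular. What the paper does instead (Proposition~\ref{prop:vb} and Corollary~\ref{c:univ-ext}) is build all the surface extensions of $C$ at once: the equations $\mathbf f(\mathbf x)+t\mathbf f_v(\mathbf x)+t^2\mathbf h_v=\mathbf 0$ depend linearly, resp.\ quadratically, on $v$, so varying $v$ over $\ker(\trsp\Phi_C)$ yields a flat family $\mathcal S\to\P^r$ inside $\P(\O_{\P^r}^{\oplus g}\oplus\O_{\P^r}(1))$; then the tautological map of this projective bundle to $\P^{g+r}$ contracts $C\times\P^r$ to $C$ and sends $\mathcal S$ to the desired $(r+2)$-fold $X$. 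The arithmetically Gorenstein, normal, and $\omega_X=\O_X(-r)$ properties then follow from the curve-section criterion (Theorem~\ref{t:cm}), and ``not a cone'' from the nontriviality of the ribbons. You do allude to a family construction, but the concrete mechanism (explicit equations in a twisted $\P^g$-bundle, then contraction) is the content here, not an inductive step-by-step extension.

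For \eqref{ext}$\Rightarrow$\eqref{cork}, your plan to bound $\cork(\Phi_C)$ via the normal bundle sequence of $C\subset S\subset\P^g$ does not produce the needed inequality: the identifications $N_{C/\P^{g-1}}\cong \restr{N_{S/\P^g}}{C}$ and $\H^0(N_{S/\P^g}(-2))=0$ give $\cork(\Phi_C)\geq h^1(T_S(-1))+1$ (this is Proposition~\ref{prop:ineq}), which is the wrong direction, and a direct Riemann--Roch on $T_S(-1)$ does not control $h^1$ without also controlling $h^0(\Omega^1_S(1))$. The paper instead proves the absolute bound $\cork(\Phi_C)\leq 20$ (Corollary~\ref{c:bnd-alpha}), independent of $g$: by Theorem~\ref{t:w+abs} and unicity one gets a rational modular map $\P(\ker(\trsp\Phi_C))\dashrightarrow\Kcan_g$, and Proposition~\ref{pr:finite} shows it is generically finite onto its image; hence $\cork(\Phi_C)-1\leq\dim\Kcan_g=19$. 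The threshold $g\geq 22$ is then just $20<g-1$. This moduli-theoretic finiteness (ruling out a positive-dimensional isotrivial family of $K3$ extensions degenerating to a fake $K3$, via Epema's classification and semistable reduction) is the real input you are missing.
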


\paragraph{}
\label{p:univ-ext}
Actually, we prove more than
Theorem~\ref{thm:cds2}, see Corollary~\ref{c:univ-ext}. Let $C$ be a
smooth
curve of genus $g \geq 11$ with $\Cliff (C) >2$, and let
$r := \cork (\Phi_C) -1$.
\subparagraph{}\smallskip
\label{sp:univ-ext}
{\it There is an arithmetically Gorenstein normal variety $X$ of
dimension $r+2$ in $\P^{g+r}$, not a cone,
containing a canonical image $C_0$ of $C$ as a section by a
linear $(g-1)$-space, and satisfying the following 
property: for all $[v] \in \P(\ker(\trsp \Phi_C))$, there is a unique
section of $X$ by a linear $g$-space containing a ribbon over $C_0$
belonging to the isomorphism class $[v]$.
}\\[\the\smallskipamount]
(See Section~\ref{S:ribbons} for background on ribbons and their
relation with the Wahl map $\Phi_C$).
\subparagraph{}\smallskip
\label{sp:unicita}
{\it For all $[v] \in \P(\ker(\trsp \Phi_C))$, there is a
\emph{unique} (up to projectivities pointwise fixing $C$, see
Remark~\ref{r:unicita}) surface $S \subset \P^g$ containing 
a ribbon over a canonical model of $C$ in the isomorphism class $[v]$.
}
\subparagraph{Definition}\smallskip%
\label{def:universal}
We say that an extension $X$ of $C_0$ as in \ref{sp:univ-ext} is
\emph{universal}. By \ref{sp:unicita}, a universal extension
of $C_0$ has as linear sections all possible
surface extensions of $C_0$ but cones.

\bigskip
No matter the genus, Lvovski's Theorem~\ref{thm:lv} tells us that
\eqref{ext} of Theorem~\ref{thm:cds2} implies the inequality 
\[
\cork(\Phi_C)\geq \min(g-1,r+1).
\]
%\par\medskip
When $r=0$, '$\eqref{ext} \Rightarrow \eqref{cork}$' in
Theorem~\ref{thm:cds2} was proved by Wahl
\cite{wahl87} and later independently by Beauville and Mérindol
\cite{beauville-merindol}, and
'$\eqref{cork} \Rightarrow \eqref{ext}$' is Theorem~\ref{t:w+abs}
by Wahl and Arbarello--Bruno--Sernesi. 
To prove $\eqref{cork} \Rightarrow \eqref{ext}$ for arbitrary $r$, we
show that Wahl's extension construction \cite[Theorem~7.1]{wahl97}
(the requirements of which are met thanks to \cite[Theorem~3]{abs1})
works in families, see \S~\ref{S:extension}.

Statement~\ref{sp:unicita} is implicitly contained in the proof of
\cite[Theorem 7.1]{wahl97} as we observe in Remark~\ref{r:unicita},
although it apparently remained unnoticed so far. 

\paragraph{}
\label{p:unic=>wahl}
If $g \geq 11$ and $\Cliff (C) >2$,
the unicity of the extension of the ribbon $v=0$ in
Theorem~\ref{t:w+abs} (see Remark~\ref{r:unicita})
tells us that the cone over a canonical model of $C$ is
the only surface in $\P^g$ containing the trivial ribbon over
$C$. Thus, if $C$ sits on a $K3$ surface, the ribbon over $C$ defined
by $S$ is non-trivial, hence $\Phi_C$ is not surjective: 
this is the theorem of Wahl and Beauville--Mérindol, though a priori
only for curves of genus $g \geq 11$ and Clifford index $\Cliff (C)
>2$; the remaining cases can be dealt with directly:
curves with $g <10$ or $\Cliff (C) \leq 2$ all
have non-surjective Wahl map \cite{wahl90,cm90,cm92,brawner},
and curves of genus $10$ sitting on a $K3$ have non-surjective Wahl
map by \cite{cukierman-ulmer}.

\begin{proof}  [Proof of Theorem~\ref{thm:cds2}]
The fact that \eqref {cork} implies \eqref {ext} provided $g \geq 11$
is the content of Corollary \ref {cor:ext}.
The converse implication is given by Lvovski's Theorem~\ref {thm:lv}
as follows.
Identify $C$ with its canonical model in $\P^{g-1}$. Then 
$\alpha(C)=h^ 0(N_{C/\P^ {g-1}}(-1))-g$ equals $\cork (\Phi_C)$ by
Lemma~\ref{l:wahl}, so one has $\alpha (C) \leq 20$ by
Corollary~\ref{c:bnd-alpha}. It follows that the assumption $\alpha
(C) < g-1$ holds if $g \geq 22$; in this case, Theorem~\ref {thm:lv}
says that \eqref{ext} implies \eqref{cork}.
\end{proof}

\bigskip
We obtain the bound $\alpha (C) \leq 20$ used in the above proof of
Theorem~\ref{thm:cds2} as a corollary of
Proposition~\ref{pr:finite}. The latter Proposition also has the
following consequence, of independent interest.

\begin{proposition}[(Corollary~\ref{cor:finite2})]%
\label{pr:modmap-finite}
Let $(S,C)\in \KCcan_g$ with $g\geq 11$ and $\Cliff(C)>2$.
There are only finitely many members $C'$ of $|\O_S(C)|$ that are
isomorphic to $C$.
\end{proposition}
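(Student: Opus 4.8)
The plan is to reduce the statement to a finiteness property of the linear system $|\O_S(C)|$ on the possibly singular $K3$ surface $S$, by exhibiting the curves $C' \in |\O_S(C)|$ isomorphic to $C$ as lying in the finite-dimensional scheme parametrising surface extensions of $C$, then invoking Proposition~\ref{pr:finite} (which furnishes the bound $\alpha(C) \leq 20$). Concretely, let $C' \subset S$ be a smooth member of $|\O_S(C)|$ with $C' \cong C$, and fix an isomorphism $\varphi\colon C \xrightarrow{\sim} C'$. Since $C'$ is a hyperplane section of $S$ in its embedding $S \subset \P^g = |L|\dual$ (with $L = \O_S(C)$), the pair $(S, C' \hookrightarrow \P^{g-1})$ is a surface extension, in $\P^g$, of the canonically embedded curve $C'$; pulling back along $\varphi$, we obtain a ribbon over the canonical model of $C$, hence an element of $\P(\ker(\trsp\Phi_C))$. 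The key point is that distinct members $C'$ (up to the action of $\Aut(C)$, which is finite since $g \geq 2$) give rise to distinct such surface extensions of a fixed canonical model $C_0$ of $C$, and by the unicity statement~\ref{sp:unicita} these are classified by the projective space $\P(\ker(\trsp\Phi_C))$, which is finite-dimensional of dimension $\cork(\Phi_C) - 1 = \alpha(C) - 1 \leq 19$.

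The central issue is to turn this bijective-up-to-isomorphism correspondence into an actual finiteness assertion, and this is exactly what Proposition~\ref{pr:finite} (whose proof we defer, using it as a black box here) provides: it controls the family of $K3$-type surface extensions of $C$ inside the universal extension $X$ of $C_0$ from~\ref{sp:univ-ext}, showing in particular that only finitely many of its linear-$g$-space sections are (possibly singular) $K3$ surfaces — equivalently, there are only finitely many pairs $(S', C_0)$ with $S' \subset \P^g$ a possibly singular $K3$ containing the fixed canonical model $C_0$ as a hyperplane section. First I would set up, for a fixed smooth $C' \in |L|$ with $C' \cong C$, the identification of the abstract surface $(S, \O_S(C))$ together with the marked curve $C'$; the minimal desingularisation $\tilde S \to S$ being a $K3$ surface, $(S, \O_S(C)) \in \KCcan_g$ as in the hypothesis, so this really is a point of the moduli stack whose fibre over $[C] \in \M_g$ we are trying to bound.

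Next I would argue that the map sending $C' \mapsto (\text{the surface extension of } C_0 \text{ obtained by transporting } C' \subset S \text{ along an isomorphism } C' \cong C \cong C_0)$ has finite fibres: two members $C_1', C_2' \in |L|$ yielding the same (up to the projectivities of Remark~\ref{r:unicita} pointwise fixing $C_0$) surface extension must differ by an automorphism of $S$ inducing a linear map on $\P^g$ fixing $C_0$ pointwise, and the group of such automorphisms is finite — indeed it injects into $\Aut(C_0)$, which is finite for $g \geq 2$. Combined with the finiteness of the target (finitely many surface extensions by Proposition~\ref{pr:finite}, refined to finitely many that are possibly singular $K3$'s, which is the relevant subfamily here), this yields that $\{\,C' \in |L| : C' \cong C\,\}$ is finite. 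I expect the main obstacle to be the careful bookkeeping of the \emph{marked} extensions: a surface $S$ may contain many curves isomorphic to $C$ in $|L|$, but each such choice, together with a choice of isomorphism to $C_0$, gives a well-defined extension, and one must check that the passage from "curves $C' \subset S$" to "abstract extensions of $C_0$" is finite-to-one in a way that is uniform, i.e. does not secretly depend on $S$; this is handled by the $\Aut$-argument above, but it requires knowing that the automorphisms of $S$ preserving $|L|$ and fixing a hyperplane section pointwise form a finite group, which follows from their faithful action on that hyperplane section. Everything else is formal once Proposition~\ref{pr:finite} is in hand.
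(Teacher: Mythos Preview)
Your overall strategy is the paper's: associate to each $C'\in |L|$ with $C'\cong C$ a surface extension of a fixed canonical model $C_0$, then combine Proposition~\ref{pr:finite} with the unicity of Remark~\ref{r:unicita} and finiteness of automorphism groups. However, you misstate what Proposition~\ref{pr:finite} actually provides, and as written this is a genuine gap.

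You assert that Proposition~\ref{pr:finite} shows that ``only finitely many of its linear-$g$-space sections are (possibly singular) $K3$ surfaces'', equivalently that there are only finitely many $K3$ extensions of $C_0$. This is false: when $C$ lies on a possibly singular $K3$, the \emph{general} fibre of the family $p:\mathcal S\to\P^r$ is a possibly singular $K3$ (this is precisely the standing hypothesis in~\ref{p:setup-finite}), so there is an $r$-dimensional family of $K3$ extensions of $C_0$. What Proposition~\ref{pr:finite} says is that the modular map $s:U\to\Kcan_g$ is \emph{finite onto its image}: over each moduli point there are only finitely many extensions. The observation you are missing is that every extension you produce---transporting $C'\subset S$ to $C_0\subset S_\varphi$ via an isomorphism $\varphi$---is abstractly isomorphic to $S$, so all of them lie in the single fibre $s^{-1}([S])$. \emph{That} fibre is finite by Proposition~\ref{pr:finite}, and this is the finiteness of the target you need.

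There is also a slip in your finite-fibre step: an automorphism of $S$ induced by a projectivity fixing $C_0$ \emph{pointwise} restricts to the identity on $C_0$, so it does not ``inject into $\mathrm{Aut}(C_0)$''. The correct conclusion (and this is how the paper finishes) is that distinct curves $C_i$ giving the same ribbon class yield, after transport to $C_0$, distinct projective automorphisms of $S$; a $K3$ surface with canonical singularities has only finitely many of those, which is the desired contradiction. With these two corrections your argument coincides with the paper's.
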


\medskip
In fact it follows from the arguments in the proof of
Corollary~\ref{cor:finite2} that if $\cork(\Phi_C)=1$
(which happens for instance if $g>37$, see
Corollary~\ref{c:bndedness} below), then for all $C' \in |\O_S(C)|$
isomorphic to $C$ there exists an automorphism of the polarized surface
$(S,\O_S(C))$ taking $C$ to $C'$
(because in this case the two curves $C$ and $C'$ have the same ribbon
in $S$).
In particular, if the automorphism group of $S$ is trivial (which
happens for instance if $S$ has Picard number $1$), then the smooth
members of $|\O_S(C)|$ are pairwise non-isomorphic.

%\bigskip
% By the way, let us point out that there is a problem with
% \cite[Proposition~1.2 and Corrigendum]{ck14} by the first-named author
% and Knutsen, which would assert something stronger than the previous
% Proposition~\ref{pr:modmap-finite}. We report about this in
% Remark~\ref{r:corr-sbagl}.

\bigskip
The following is a consequence of \ref{p:univ-ext}:

\begin{corollary}
Let $C$ be a smooth curve of genus $g \geq 11$ with
$\Cliff (C) >2$. 
The curve $C$ cannot sit on two $K3$ surfaces $S$ and $S'$ such that
its respective classes in $\Pic (S)$ and $\Pic (S')$ have different
divisibilities.
\end{corollary}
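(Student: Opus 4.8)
The plan is to deduce this Corollary from the universality statement~\ref{p:univ-ext}, specifically from property~\ref{sp:univ-ext}, which says that for $r = \cork(\Phi_C) - 1$ there is a single arithmetically Gorenstein normal variety $X \subset \P^{g+r}$, not a cone, of dimension $r+2$, containing a fixed canonical model $C_0$ of $C$, whose sections by linear $g$-spaces through $C_0$ realize \emph{all} surface extensions of $C_0$ (other than the cone), each attached to a well-defined ribbon class $[v] \in \P(\ker(\trsp\Phi_C))$. The key point is that the divisibility of the class of $C$ inside $\Pic(S)$ is an invariant of the abstract surface extension $S \supset C_0$, hence—via \ref{sp:unicita} (unicity of the surface integrating a given ribbon)—it is encoded in the datum of the ribbon class $[v] \in \P(\ker(\trsp\Phi_C))$ cut out on $C_0$ by a hyperplane through it in $X$.

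Concretely, suppose $C$ sits on two $K3$ surfaces $S$ and $S'$. Fixing once and for all a canonical model $C_0 \subset \P^{g-1} \subset \P^{g+r}$, the embeddings $C \hookrightarrow S$ and $C \hookrightarrow S'$ give, by Riemann--Roch on the $K3$ surfaces (so that $\O_S(C)$ and $\O_{S'}(C)$ restrict to $\omega_C$), two projective surfaces in $\P^g$, each containing $C_0$ as a hyperplane section and neither a cone (a $K3$ surface is not a cone). By the unicity of the extension~\ref{sp:unicita}, each of these is, up to a projectivity fixing $C_0$ pointwise, one of the linear sections of the universal extension $X$; so each determines a ribbon class $[v], [v'] \in \P(\ker(\trsp\Phi_C))$. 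Now I would argue that the \emph{corank} $r+1 = \cork(\Phi_C)$ already constrains the divisibility: indeed, by the bound $\alpha(C) = \cork(\Phi_C) \leq 20$ (Corollary~\ref{c:bnd-alpha}/Proposition~\ref{pr:finite}) together with the Lvovski inequality $\cork(\Phi_C) \geq \min(g-1, r+1)$ applied to the $(r+1)$-extension furnished by each embedding, one gets quantitative control; but the cleaner route is the following.

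The real mechanism is that $X$ itself knows all the $K3$ extensions at once. If $C$ sat on $K3$ surfaces $S$ and $S'$ with classes of divisibilities $d \neq d'$, then both $S$ and $S'$ appear as hyperplane sections of the \emph{same} variety $X$, cut by hyperplanes $H, H'$. Consider the pencil of hyperplanes spanned by $H$ and $H'$: the generic member cuts $X$ in a surface that deforms $S$ to $S'$ inside $X$. Such a flat family of surface sections of a fixed arithmetically Gorenstein $X$, all containing the fixed curve $C_0$ and all with trivial canonical bundle (being $K3$ by~\ref{sp:unicita} applied to the corresponding ribbon, provided the generic ribbon in the pencil integrates to a genuine, not fake, $K3$), carries a locally constant Picard lattice along the locus where the surface stays smooth and $K3$; in particular the divisibility of the class of $C_0$ in $\Pic$ of the section is constant, forcing $d = d'$. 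The hard part will be controlling the generic member of the pencil: one must rule out that the intermediate sections degenerate (to cones, to fake $K3$ surfaces, or to surfaces with worse-than-canonical singularities) in a way that breaks the constancy of the divisibility, and here one should invoke Theorem~\ref{t:w+abs} together with the hypothesis $\Cliff(C) > 2$, $g \geq 11$—which guarantee that \emph{every} ribbon in $\ker(\trsp\Phi_C)$ integrates to an honest surface hyperplane section of a $K3$-type extension—and the normality and arithmetic Gorenstein-ness of $X$ to keep the general linear section irreducible, normal, and with at worst canonical singularities (Bertini on $X \setminus \operatorname{Sing} X$). Once the general section of the pencil is seen to be a (possibly singular) $K3$ containing $C_0$ with constant Picard divisibility, the equality $d = d'$ follows and the Corollary is proved.
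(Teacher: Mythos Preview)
Your core strategy matches the paper's: use the universal extension from~\ref{p:univ-ext} to place $S$ and $S'$ in a single connected family of surface extensions of $C_0$, then invoke that the divisibility of $[C]$ is constant in such a family. The paper's proof is two sentences: the family of all extensions is irreducible (base $\P^r$), and divisibility is a topological character, hence deformation-invariant.

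Where your write-up goes astray is in the third paragraph. You manufacture a difficulty---controlling \emph{every} member of the pencil between $H$ and $H'$, ruling out cones and fake $K3$ surfaces---and then propose to resolve it by invoking Theorem~\ref{t:w+abs}. That invocation is wrong: Theorem~\ref{t:w+abs} guarantees that each ribbon integrates to \emph{some} surface in $\P^g$, but that surface may perfectly well be a fake $K3$ (this is exactly what happens for the plane-curve extensions in \S\ref{S:plane}). So your proposed mechanism for excluding fake $K3$'s does not work.

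Fortunately the difficulty is illusory. You do not need every member of the pencil to be a genuine $K3$; you only need the locus of \emph{smooth} fibres in the family $p:\mathcal S \to \P^r$ to be connected. Smoothness is a Zariski-open condition on the irreducible base $\P^r$, and by Theorem~\ref{thm:epema} every smooth fibre is automatically a genuine $K3$ surface (fake $K3$'s are all singular). Since both $[v]$ and $[v']$ lie in this open locus, they are joined by a path along which the fibres are diffeomorphic smooth $K3$'s. The divisibility of $[C_0]$ in $\Pic$ coincides with its divisibility in $H^2(\cdot,\Z)$ (because $\Pic$ is primitively embedded in $H^2$ for a $K3$), and the latter is a topological invariant, constant along the path. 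That is the whole argument; the Lvovski digression in your second paragraph is irrelevant and should be deleted.
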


%\vspace{\negvcorr}
\begin{proof}
By \ref{p:univ-ext}, all extensions of the canonical
model of $C$ are packaged together in an irreducible family. The
Corollary thus follows from the fact that the divisibility of $[C]$ in
$\Pic (S)$ is a topological character, hence constant under
deformations of the pair $(C,S)$.
\end{proof}

%\paragraph{}
\bigskip
Next, we study the ramification of the forgetful map 
$c_g: \KC_g \to \M_g$. 
To put our results in perspective, recall that 
\[
\dim (\KC_g) - \dim (\M_g) = (19+g)-(3g-3) = 2(11-g).
\]
The primitively polarised case has been classically studied:
for $g\geq 11$, the map $\cprim_g$ is birational onto its image if
$g\neq 12$,
whereas its generic fibre is irreducible of dimension $1$ if $g=12$
(\cite[\S~5.3]{clm93} and \cite{mukai2});
for $g \leq 11$, the map $\cprim_g$ is dominant if $g \neq 10$
\cite{mukai1}, and onto a hypersurface of $\M _{10}$ if $g=10$
\cite{cukierman-ulmer}, with irreducible general fibre in any case
\cite{cm90,clm93}.
The non-primitively polarised cases have been studied in
\cite{cfgk-adv, kemeny} where it is shown that, if $g \geq 11$ then
$c_g$ is generically finite in all but possibly finitely
many cases.
\par
It turns out that in the range $g \geq 11$, the map $c_g$ has smooth
fibres over the locus of curves with Clifford index greater than $2$.

\begin{theorem}\label{thm:cds}
Let $(S,C)\in \KC_g$ with $g\geq 11$ and $\Cliff(C)>2$. Then
\[
\dim(\ker (dc_g)_{(S,C)})=\dim(c_g^ {-1}(C))=\cork (\Phi_C)-1.
\]
\end{theorem}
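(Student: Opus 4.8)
\textbf{Proof plan for Theorem~\ref{thm:cds}.}
The plan is to analyse the deformation theory of the forgetful map $c_g$ near a point $(S,C)$ by comparing the tangent space to the fibre $c_g^{-1}(C)$ with a suitable cohomology group, and then to identify that cohomology group with $\ker(\trsp\Phi_C)$, whose dimension is $\cork(\Phi_C)$. First I would set up the standard identifications: deformations of the pair $(S,C)$ are governed by $\H^1(S, T_S(-\log C))$, while the map $dc_g$ records the induced deformation of $C$, \ie the composite with $\H^1(S, T_S(-\log C)) \to \H^1(C, T_C)$. Thus $\ker(dc_g)_{(S,C)}$ is the image of $\H^1(S, T_S(-\log C)\otimes\O_S(-C))$ (deformations of the pair keeping $C$ abstractly fixed, to first order) — or more precisely one works with the long exact sequence attached to $0\to T_S(-\log C)(-C)\to T_S(-\log C)\to T_C\to 0$ and tracks the connecting/forgetting maps. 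Along the way one uses $\omega_S\cong\O_S$, $\H^1(\O_S)=0$, and the adjunction $\O_C(C)\cong\omega_C$ to rewrite everything on the $K3$ in terms of $L=\O_S(C)$ and to kill the irrelevant terms.

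The crucial input is to recognise that the relevant group is precisely $\H^1(S, T_S\otimes L\dual)$ — the invariant appearing in the abstract of the paper and in the $K3$-analogue of Theorem~\ref{t:w+abs} (referred to as Theorem~\ref{t:intK3}) — and that this group is canonically dual, or isomorphic, to $\ker(\trsp\Phi_C)$. Concretely, one expects a commutative diagram relating $0\to T_C\to \restr{T_S}{C}\to N_{C/S}\to 0$ twisted by $\omega_C$ to the Gaussian/Wahl machinery: the coboundary $\H^0(N_{C/S}\otimes\omega_C)\to\H^1(T_C\otimes\omega_C)=\H^1(\omega_C^{\otimes 2})\dual{}^\vee$ (via Serre duality, the target is $\H^0(\omega_C^{\otimes 3})\dual$... ) is governed by $\Phi_C$, and a dimension count using Riemann--Roch on $C$ and the vanishing $\H^i(T_S\otimes L\dual)=0$ for $i\neq 1$ (which holds for $g$ large by Kodaira vanishing type arguments, or is part of the cited ABS-type results) pins down $\dim\H^1(T_S\otimes L\dual)=\cork(\Phi_C)-1$. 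Here the "$-1$" comes from the trivial ribbon: the class of $S$ itself, or equivalently the fact that the cone is always an extension, so one dimension of $\ker(\trsp\Phi_C)$ is accounted for by something that does not move $(S,C)$ in moduli beyond rescaling. This is exactly the bookkeeping reflected in the statements \ref{sp:univ-ext}--\ref{sp:unicita} and in $r=\cork(\Phi_C)-1$.

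For the second equality, $\dim(c_g^{-1}(C))=\cork(\Phi_C)-1$, I would argue that the fibre is smooth of the expected dimension. Smoothness follows once one shows the obstruction space to deforming $(S,C)$ with $C$ fixed vanishes, \ie $\H^2$ of the appropriate sheaf on $S$ is zero (again $\H^2(T_S\otimes L\dual)=0$ by Serre duality and the same vanishing, since $\H^0(\Omega^1_S\otimes L)=0$ for an ample $L$ on a $K3$ — this uses $\Omega^1_S\cong T_S$ up to twist by $\omega_S\cong\O_S$). Then the fibre, being a fibre of a morphism of stacks whose relative tangent space has constant dimension $\cork(\Phi_C)-1$ (constant by Theorem~\ref{thm:cds2}/Corollary~\ref{c:univ-ext}, which packages all surface extensions of the canonical model of $C$ into one irreducible family $X$), is smooth of that dimension; the geometric content is that the $K3$ surfaces on which $C$ sits are exactly the smooth $K3$ linear sections of the universal extension $X$ through $C_0$, parametrised by $\P(\ker(\trsp\Phi_C))$ modulo the trivial ribbon, hence of dimension $\cork(\Phi_C)-1$.

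The main obstacle I anticipate is the precise identification of $\ker(dc_g)$ with $\H^1(T_S\otimes L\dual)$, including getting the "$-1$" correct: one must carefully distinguish first-order deformations of the \emph{pair} $(S,C)$ inducing the trivial deformation of $C$ (these form $\ker(dc_g)$) from first-order deformations of $C$ \emph{as a subscheme of a fixed $S$} (which is $\H^0(N_{C/S})$, irrelevantly large), and correctly handle the automorphisms/rescalings so that the count lands on $\cork(\Phi_C)-1$ rather than $\cork(\Phi_C)$ or $\cork(\Phi_C)+1$. This is where the ribbon interpretation of Section~\ref{S:ribbons} and the unicity statement \ref{sp:unicita} do the real work: they guarantee that the map from $\P(\ker(\trsp\Phi_C))\setminus\{[\text{trivial}]\}$ to $K3$-extensions of $C_0$ is a bijection, so no further quotient is needed and the dimension is exactly $\cork(\Phi_C)-1$.
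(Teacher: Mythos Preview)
Your plan contains a genuine gap. The smoothness argument for the fibre relies on the vanishing $\H^2(T_S\otimes L\dual)=0$, which you justify by Serre duality and the claim $\H^0(\Omega^1_S\otimes L)=0$. This claim is false: for a polarized $K3$ surface $(S,L)$ of genus $g$ one has $\chi(\Omega^1_S\otimes L)=L^2-20=2g-22$ by Riemann--Roch, and $\H^2(\Omega^1_S\otimes L)=0$ (Serre dual to $\H^0(T_S\otimes L\dual)=0$), so $h^0(\Omega^1_S\otimes L)\geq 2g-22>0$ for $g\geq 12$. Thus $\H^2(T_S\otimes L\dual)\neq 0$ in general, and your obstruction-space argument collapses. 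Likewise, your proposed direct identification $h^1(T_S\otimes L\dual)=\cork(\Phi_C)-1$ via ``a dimension count using Riemann--Roch on $C$'' is not substantiated; in the paper this equality is a \emph{consequence}, not an input (Corollary~\ref{c:intrsting-rmk}).

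The paper bypasses both difficulties with a sandwich argument:
\[
\cork(\Phi_C)-1 \;\leq\; \dim(c_g^{-1}(C)) \;\leq\; \dim(\ker(dc_g)_{(S,C)}) \;=\; h^1(T_S(-1)) \;\leq\; \cork(\Phi_C)-1.
\]
The first inequality is the construction of the universal family of extensions (your lower-bound idea, made precise in Corollary~\ref{cor:palese-vero}); the equality is Lemma~\ref{l:sernesi}, which you correctly identified. The last inequality (Proposition~\ref{prop:ineq}) is the step you are missing: it comes not from any vanishing of $\H^2$ on $S$, but from the vanishing $\H^0(N_{S/\P^g}(-2))=0$ (Lemma~\ref{l:fighissimo}, using $\Cliff>2$) together with the identification $N_{C/\P^{g-1}}\cong \restr{N_{S/\P^g}}{C}$, which yields $h^0(N_{S/\P^g}(-1))\leq h^0(N_{C/\P^{g-1}}(-1))$ and hence, via Lemmas~\ref{l:wahl} and~\ref{l:wahl1}, the desired bound. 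No smoothness or unobstructedness statement is ever needed.
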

\vspace{\mnegvcorr}

Over curves with 
% Clifford index smaller or equal than $2$ 
$\Cliff(C)\leq 2$
the situation is more complicated, if only because then the spaces 
$\H^0(N_{C/\P^{g-1}}(-k))$, $k \geq 2$, don't necessarily vanish 
(equivalently the higher Gaussian maps 
$\Phi_{\omega_C ^{\otimes k}, \omega_C}$,
$k \geq 2$, are not necessarily surjective \cite{wahl88}),
contrary to what happens when $\Cliff (C)>2$, compare
Lemma~\ref{l:fighissimo}. 
See \cite[Cor.~4.4]{cm90} for the situation over the general curve of
genus $g \leq 6$.

\begin{remark}
Beauville \cite[Sec.~5]{beauville-fano} observed that the map $c_g$ is
not everywhere unramified, as it has positive dimensional fibres at
those points $(S,C)$ such that $S$ is an anticanonical divisor of some
smooth Fano threefold $V$.
Theorems~\ref{thm:cds} and \ref{thm:cds2} together say that,
in the range of application of Theorem~\ref{thm:cds}, all the
ramification of $c_g$ is accounted for by this phenomenon.
\par This reflects the fact that for $g \leq 12$, $g \neq 11$, the
positive dimensionality of the generic fibre of $\cprim_g$ is explained by
the existence of Fano varieties with coindex $3$ and Picard number $1$ 
(see, e.g., \cite[Chap.~5]{encyclopedia}).
\end{remark}

\begin{proof}[Proof of Theorem~\ref {thm:cds}]
%In the subsequent sections we prove intermediate results which
%provide
We have
the following chain of (in)equalities:
\begin{alignat*}{2}
\cork (\Phi_C)-1 &\leq   \dim(c_g^ {-1}(C))
&\qquad& \text{by Corollary~\ref {cor:palese-vero}} \\
&\leq \dim(\ker (dc_g)_{(S,C)})
&\qquad& \text {obviously}\\
&= h^ 1(T_S(-1))
&\qquad& \text{by Lemma~\ref {l:sernesi}} \\
&\leq \cork (\Phi_C)-1
&\qquad& \text{by Proposition~\ref {prop:ineq}.}
\end{alignat*}
\end{proof}

\bigskip
The following result is a straightforward but noteworthy consequence
of the proof of Theorem~\ref{thm:cds}. It says in particular that the
corank of the Wahl map is the same for all smooth hyperplane sections
of a given $K3$ surface. 
% We refer to the next subsection (right before
% Theorem~\ref{t:intK3}) for the definition of the Clifford index of a
% polarized $K3$ surface.

\begin{corollary}
\label{c:intrsting-rmk}
Let $(S,L) \in \K_g$, 
and assume that $g \geq 11$ and $\Cliff (S,L) >2$.
For every smooth member $C \in |L|$, one has
\[
\cork (\Phi_C) = h^1 (T_S(-1)) +1.
\]
\end{corollary}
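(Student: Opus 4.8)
The plan is to derive Corollary~\ref{c:intrsting-rmk} directly from the chain of (in)equalities established in the proof of Theorem~\ref{thm:cds}. Fix $(S,L) \in \K_g$ with $g \geq 11$ and $\Cliff(S,L) > 2$, and let $C \in |L|$ be a smooth member. Since $\Cliff(S,L)$ is by definition $\Cliff(C)$ for any smooth $C \in |L|$ (this does not depend on the choice of $C$ by \cite{green-lazarsfeld}), we have $\Cliff(C) > 2$, so the hypotheses of Theorem~\ref{thm:cds} are satisfied for the pair $(S,C) \in \KC_g$.

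First I would recall that the proof of Theorem~\ref{thm:cds} exhibits the chain
\[
\cork(\Phi_C) - 1 \;\leq\; \dim(c_g^{-1}(C)) \;\leq\; \dim(\ker(dc_g)_{(S,C)}) \;=\; h^1(T_S(-1)) \;\leq\; \cork(\Phi_C) - 1,
\]
so that \emph{all} the terms are equal; in particular the third and the last give $h^1(T_S(-1)) = \cork(\Phi_C) - 1$, which rearranges to $\cork(\Phi_C) = h^1(T_S(-1)) + 1$. The only point requiring care is that the quantity $h^1(T_S(-1))$ appearing in the middle of the chain is intrinsic to the polarized surface $(S,L)$ — it is $\H^1(T_S \otimes L\dual)$ — and does not involve the chosen curve $C$; this is clear from the notation, and it is precisely what makes the statement meaningful. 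Hence the value $\cork(\Phi_C)$, computed from the right-hand side $h^1(T_S(-1)) + 1$, is the same for every smooth $C \in |L|$.

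I do not anticipate a genuine obstacle here: the corollary is essentially a bookkeeping observation, extracting the equality of the two ends of an already-proven chain of inequalities and noting that one end depends only on $(S,L)$. The one thing to double-check is that Lemma~\ref{l:sernesi} — which supplies the identification $\dim(\ker(dc_g)_{(S,C)}) = h^1(T_S(-1))$ — indeed yields the cohomology group of $S$ alone (with no hidden dependence on $C$ beyond the class $L = \O_S(C)$), and that Proposition~\ref{prop:ineq} gives the final inequality $h^1(T_S(-1)) \leq \cork(\Phi_C) - 1$ under exactly the stated hypotheses $g \geq 11$, $\Cliff(C) > 2$. Granting these, the proof is immediate.
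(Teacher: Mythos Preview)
Your proposal is correct and is exactly the approach the paper has in mind: the corollary is stated as ``a straightforward but noteworthy consequence of the proof of Theorem~\ref{thm:cds}'', and you have spelled out precisely that consequence, namely that the chain of (in)equalities forces $h^1(T_S(-1)) = \cork(\Phi_C)-1$, together with the observation that the left-hand side depends only on $(S,L)$.
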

% \vspace{\negvcorr}
% \vspace{\negvcorr}

\paragraph{}
\label{p:prokhorov}
It is a known fact that a threefold $V$ in
$\P^{g+1}$ having as hyperplane section a $K3$ surface $S$,
possibly with $ADE$ singularities, is an arithmetically Gorenstein 
%(\ie projectively normal and Gorenstein) 
normal Fano threefold with canonical
singularities, see Corollary~\ref{C:ext}.
%{cor:ext} and \ref{cor:ext2}.
% If moreover $(S,\restr{-K_V}S)$ is primitively
% polarized, then $V$ has index $1$.
Consequently, in the setting of Theorem~\ref{thm:cds2}, 
if we assume in addition that there exists an extension of $C$ to a
surface $S$ with at worst $ADE$ singularities
(so that $S$ is a $K3$ surface, possibly singular),
then the sections of $Y$ with linear subspaces of dimension $g+1$
containing $S$ are Fano threefolds of
genus $g$, with canonical singularities.
We may thus use the boundedness of Fano varieties to derive 
the following corollary from our previous results.

\begin{corollary}
\label{c:bndedness}
Let $C$ be a smooth curve of genus $g>37$, and Clifford index $\Cliff
(C) >2$. If the canonical model of
$C$ is a hyperplane section of a $K3$ surface $S$,
possibly with $ADE$ singularities,
then
$\cork (\Phi_C) = 1$.
\end{corollary}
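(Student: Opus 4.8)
The plan is to combine Corollary~\ref{c:bndedness}'s hypotheses with the boundedness of Fano threefolds, via the chain of results already assembled. First I would invoke Theorem~\ref{thm:cds2}: since $g\geq 11$ and $\Cliff(C)>2$, setting $r:=\cork(\Phi_C)-1$ we obtain an arithmetically Gorenstein normal variety $Y\subset \P^{g+r}$, not a cone, of dimension $r+2$, with $\omega_Y=\O_Y(-r)$, having a canonical model $C_0$ of $C$ as a linear section by a $(g-1)$-space. Next I would use the extra hypothesis: $C_0$ extends to a surface $S$ with at worst $ADE$ singularities, so $S$ is a (possibly singular) $K3$ surface. By statement~\ref{sp:unicita} (unicity of surface extensions) this $S$ is, up to projectivity, the unique non-cone surface extension of $C_0$, hence it appears as a linear section of $Y$ by some $g$-space $\Lambda_S$.

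The key step is then to iterate the linear-section structure upward. Cutting $Y$ with a linear $(g+1)$-space containing $\Lambda_S$ produces a threefold $V\subset\P^{g+1}$ having $S$ as a hyperplane section. By the fact recalled in~\ref{p:prokhorov} (Corollary~\ref{C:ext}), any such $V$ is an arithmetically Gorenstein normal Fano threefold with canonical singularities, and its genus is $g$ (since $-K_V$ restricts to the $K3$ polarization on $S$, which has degree $2g-2$). Now the point: if $r=\cork(\Phi_C)-1\geq 1$, then $Y$ has dimension $\geq 3$, so $V$ can be taken to be a genuine threefold section, and in fact by the universality in~\ref{sp:univ-ext} all such Fano threefolds $V$ of genus $g$ with canonical singularities containing $S$ as an anticanonical section are parametrized by (an open part of) a projective space of linear $(g+1)$-subspaces through $\Lambda_S$ inside $\P^{g+r}$; crucially, no matter how large $r$ is, these are genuine (non-cone) Fano threefolds, so we obtain Fano threefolds of genus $g$ for every $g>37$ in the hypothesis.

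The main obstacle, and the heart of the argument, is to derive a contradiction from $r\geq 1$ when $g>37$. For this I would appeal to the boundedness of Fano threefolds with canonical singularities (Kawamata / Birkar): there is a uniform bound $N$ on the degree $(-K_V)^3=2g-2$, equivalently on the genus $g$, of a Fano threefold with canonical Gorenstein singularities of Picard number $1$. Combining the classification results for canonical Gorenstein Fano threefolds — where the maximal genus is known to be $g=37$ (the threefold $V_{72}\subset\P^{38}$, a degeneration of $\P(3,1,1,1)$ or related to the deformation of cones over a quartic del Pezzo, in the Prokhorov classification) — one concludes that a Fano threefold of genus $g>37$ with canonical singularities cannot exist. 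Since the hypothesis $g>37$ together with $r\geq 1$ forced the existence of such a $V$, we must have $r=0$, i.e.\ $\cork(\Phi_C)=1$.

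I expect the delicate point to be confirming that $V$ genuinely has \emph{canonical} (not worse) singularities and Picard number $1$ so that Prokhorov's bound $g\leq 37$ applies: canonicity is guaranteed by~\ref{p:prokhorov}/Corollary~\ref{C:ext}, while the relevant numerical input is simply that $(-K_V)^3=2g-2>72$ when $g>37$, which exceeds the maximum degree $72$ of a canonical Gorenstein Fano threefold; thus one does not even need the Picard number hypothesis, only the sharp degree bound. The remaining routine check is that the section $V$ of $Y$ by a $(g+1)$-space through $\Lambda_S$ is not a cone — but a cone over $S$ would force $S$ itself, and hence $C_0$, to extend to a cone, contradicting $r\geq 1$ via~\ref{sp:unicita} and~\ref{p:unic=>wahl}.
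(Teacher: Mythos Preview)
Your approach is essentially the paper's: assuming $\cork(\Phi_C)\geq 2$, produce via Corollary~\ref{C:ext} a Fano threefold of genus $g$ with canonical Gorenstein singularities, then invoke Prokhorov's sharp bound $g\leq 37$ to reach a contradiction. You over-elaborate where the paper cites Corollary~\ref{C:ext} in one line; note also that your claim that $S$ is the \emph{unique} non-cone surface extension of $C_0$ is false when $\cork(\Phi_C)>1$ (there is one per ribbon class), but the conclusion you need---that $S$ occurs as a linear section of $Y$---is exactly the universality in~\ref{sp:univ-ext}, and the non-cone and canonicity properties of $V$ are already packaged in~\ref{cor:ext} and~\ref{cor:ext2}.
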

% \vspace{\negvcorr}
%\vspace{\negvcorr}

\begin{proof}
If $C$ is a hyperplane section of a $K3$ surface $S$
and $\cork (\Phi_C) > 1$, then by
Corollary~\ref{C:ext} there is an
arithmetically Gorenstein Fano threefold of genus
$g$, with canonical singularities, and having $C$ as a curve section.
By \cite[Thm.~1.5]{prokhorov} all such threefolds have genus $g \leq
37$.
\end{proof}

\paragraph{Remark}
Based on the above statement, one may be tempted to speculate that
all smooth curves $C$ of genus $g>37$ with $\Cliff (C)>2$ have
$\cork (\Phi_C) \leq 1$; this is not true.

If one drops the assumption that the curve $C$ lies on a $K3$ surface
in Corollary~\ref{c:bndedness}, one has to deal with the possibility
that all surface extensions of the curve $C$ may have singularities
worse than $ADE$ singularities. In such a situation, threefolds
extending $C$ are no longer Fano, and there is no boundedness result
in this case.

As a matter of fact, plane curves provide examples of curves of
arbitrarily large genus, having Clifford index greater than $2$, and
for which the Wahl map has corank $10$ \cite[Thm.~4.8]{wahl90}.

\paragraph{}
\label{p:plane-intro}
In Section~\ref{S:plane} 
we study the extensions of the canonical models of 
%curves having a (possibly singular) plane model, 
the normalizations of plane curves,
continuing a long story contributed to by numerous authors
(see at least \cite{epema, abs1} and the references therein).
% This provides examples of curves
% of arbitrarily high genus, with Clifford index greater than $2$,
% that have Gauss maps of rather high corank and are indeed extendable,
% but nevertheless have no smooth extension.
Such surface extensions are rational, hence not $K3$,
and have indeed an elliptic (in general non-smoothable) singularity.
\par
We give an explicit construction of the universal extensions of
such curves.
These extensions are not Fano, and provide an unbounded family of
``fake Fano'' varieties,
% mimicking the definition of fake $K3$ surfaces,
% given in subsection~\ref{s:notation}, 
% a \emph{fake Fano variety} is
i.e.\ 
irreducible varieties $X$ of dimension $r+2$ in $\P^ {g+r}$
($r>0$), with non-canonical singularities,
and with curve sections canonical curves of genus $g$.
Whereas fake $K3$ surfaces are fairly well understood (for
instance, there is a classification \cite{epema}), understanding fake
Fano varieties is a wide open problem.

\par We use the precise relation between extensions and cokernel of
the Gauss map to 
% obtain a lower bound on the corank of the Gauss map
% of plane curves with up to nine ordinary singular points. This gives a
prove a conjecture of Wahl
\cite[p.~80]{wahl90} in the case of Del Pezzo surfaces, see
Proposition~\ref{pr:wahl-conj}; 
the case of the projective plane was already
handled in \cite{wahl90}.

% \bigskip
% The following remark belongs to the same circle of ideas, and also
% relies on the fact that Fano threefolds are bounded, as it is based on
% the classification of smooth Fano threefolds.

\begin{remark}
\label{rk:classif}
\emph{All canonical curves in smooth Fano threefolds $V$ with 
Picard number $\rho (V) \geq 2$ are Brill--Noether special.}
\par\medskip
 More precisely, we claim that if $V$ is a smooth Fano threefold
$V$ with $\rho (V) \geq 2$, then the smooth curves in $V$
complete intersections of two elements of $|-K_V|$ are Brill--Noether
special.  To see this, one has to consider one by one all the elements
in the list \cite[Chap.~12]{encyclopedia}, and check that in each case
there is a 
line bundle on $V$ that gives Brill--Noether special linear series 
%(\ie linear series $g^r_d$  with $\rho(g, r, d)<0$) 
on the canonical curves contained in $V$. 
We do not dwell on this here. 
% \par By the way, note that the main result in
% \cite{green-lazarsfeld} asserts that it is a necessary
% condition for a curve $C$ as above to be Brill--Noether special, that
% there is a line bundle on $V$ giving a special linear series on $C$.
\end{remark}

Since all smooth Fano threefolds with Picard number $1$ have genus $g
\leq 12$, $g \neq 11$,
Remark~\ref{rk:classif} together with \ref{p:prokhorov} leads to the
following question.

\begin{question}
\label{q:BN-cork>1}
Does there exist any Brill--Noether general curve of genus $g \geq
11$, $g \neq 12$, such that
$\cork (\Phi_C) > 1$?
\end{question}

\medskip
We cannot answer this question so far, for the following two reasons:
(i) as far as we know, no classification of Fano threefolds with
Gorenstein canonical singularities is available, 
and (ii) there is the possibility that all surface extensions of
a given curve have singularities worse than $ADE$ singularities.

Note however that the singularities of a surface extension of a
Brill--Noether--Petri general curve cannot be too bad: it is proven in
\cite{abs1} that such an extension is smoothable to a $K3$ surface
if $g \geq 12$.

The so-called \emph{du Val curves} 
(a particular instance of the curves we study in
\S~\ref{S:plane},
see \cite{abfs})
are an interesting example with regard to this problematic.
% given an integer $g>1$, a du Val curve is
% the normalisation of a plane curve of degree $3g$, with eight points
% of multiplicity $g$ and one point of multiplicity $g-1$, all nine
% lying on a smooth cubic, and no other (even infinitely near)
% singularities; such a curve has genus $g$. 
%, see \ref{p:plane-intro} above.
Under suitable generality assumptions, 
a du Val curve is Brill--Noether--Petri
general \cite{treibich,abfs};
its Wahl map has corank 
$1$ if $g$ is odd \cite{ab}, and is unknown otherwise; this leaves
Question~\ref{q:BN-cork>1} open. 
Note that when $g$ is odd, the canonical model of a general du Val
curve has a unique surface extension, which is a rational surface with
a smoothable elliptic singularity (see
Proposition~\ref{pr:plane-curves}).

\paragraph{}
\label{p:plane-curves}
In Corollary~\ref{c:bnd-alpha}, we prove as a consequence of
Proposition~\ref{pr:modmap-finite} (Proposition~\ref{pr:finite})
%Remark~\ref{r:maxmod} 
that $\cork(\Phi_C) \leq 20$ for any 
smooth curve of genus $g \geq 11$ and $\Cliff (C) >2$ lying on a
smooth $K3$ surface.
% (we have already mentioned this upper bound before, as it is used in
% the proof of Theorem~\ref{thm:cds2}).
We suspect that this bound is far from being sharp.
% \par
% The corank of the Wahl map is known for curves having a plane model,
% smooth \cite{wahl90} or with various singularities \cite{kang, ab,
% edoardo-DP1}, the maximal known value being $10$, which is reached for
% smooth plane curves.
\par
The corank of the Wahl map of a general tetragonal curve of genus $g
\geq 7$ equals $9$ \cite{brawner}. 
On the other hand, the corank of the Wahl map of a hyperelliptic
(resp.\ trigonal) curve of genus $g$ is $3g-2$ \cite{wahl90, cm92}
(resp.\ $g+5$ \cite{brawner,cm92}).
Note that \cite{wahl90, cm92} also assert that $3g-2$ is the maximal
possible value for the corank of the Gauss map of a curve of genus
$g$, and is attained only for hyperelliptic curves.
\begin{question}
Does there exist a universal, genus independent, bound on $\cork
(\Phi_C)$ for curves $C$ with Clifford index $\Cliff (C) > 2$?
\end{question}

\subsection{$K3$ surfaces}
\label{s:results-K3}

Generally speaking, the results about canonical curves discussed above
pass to their smooth extensions in $\P^g$, namely $K3$ surfaces.
First of all, we prove the following result for $K3$ surfaces,
analogous to Theorem~\ref{t:w+abs}.
% in its complete form including the
% unicity statement~\ref{sp:unicita}.
Given $(S,L) \in \K_g$, we consider $S$ in its embedding in
$\P^g=|L|\dual$. 
% By definition, the Clifford index of a polarized $K3$ surface $(S,L)$
% is the Clifford index of any smooth curve $C \in |L|$; 
% by \cite {green-lazarsfeld}, this does not depend on the choice of $C$.
\begin{theorem}
\label{t:intK3}
Let $(S,L) \in \K_g$ be a polarized $K3$ surface of genus
$g \geq 11$, such that $\Cliff(S,L)>2$. Every ribbon $v \in
\H^1(T_S \otimes L\dual)$ may be integrated to a unique threefold $V$ in
$\P^{g+1}$, up to projectivities.
\end{theorem}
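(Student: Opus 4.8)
The plan is to mimic, in one dimension up, the strategy underlying Theorem~\ref{t:w+abs} and the universal extension result \ref{sp:univ-ext}. First I would set up the ribbon interpretation: a class $v \in \H^1(T_S \otimes L\dual)$ corresponds, via the extension-class formalism, to a ribbon $\tilde S$ over $S$ embedded in $\P^{g+1}$ (an infinitesimal thickening of $S$ in the normal direction), exactly as elements of $\ker(\trsp\Phi_C) \cong \H^1(T_C \otimes \omega_C\dual)$ give ribbons over the canonical curve $C$; this is the content of Section~\ref{S:ribbons} applied to $S$ in place of $C$. The existence half of the statement — that $v$ integrates to an honest threefold $V \subset \P^{g+1}$ having $S$ as a hyperplane section — should come from the family version of Wahl's extension construction that the paper develops in \S~\ref{S:extension}: one takes a smooth hyperplane section $C \in |L|$, notes that by Corollary~\ref{c:intrsting-rmk} (or its proof) one has $\cork(\Phi_C) = h^1(T_S(-1))+1 \geq 2$ as soon as $v \neq 0$, forms the universal extension $X$ of $C_0$ from \ref{sp:univ-ext}, and extracts $V$ as the linear section of $X$ by the $(g+1)$-space corresponding to the pair (the ribbon over $C_0$ cut out by $S$, together with the extra direction prescribed by $v$). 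One must check that the restriction-of-ribbons map $\H^1(T_S \otimes L\dual) \to \H^1(T_C \otimes \omega_C\dual)$ behaves well enough — injectivity up to the relevant scalars, via the normal bundle sequence of $C \subset S$ tensored appropriately — so that distinct $v$'s land in distinct ribbons over $C_0$ and hence in distinct linear sections of $X$; for $v = 0$ the only integration is the cone, by \ref{p:unic=>wahl}.

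The unicity (up to projectivities) is the real heart of the matter and the step I expect to be the main obstacle. The clean way is to reduce it to the already-established unicity for curves: if $V$ and $V'$ both integrate the same $v$, intersect each with a general hyperplane containing $S$ to get surfaces $S_V, S_{V'} \subset \P^g$; these both contain a ribbon over $C_0 = C \cap (\text{general }\P^{g-1})$ in the isomorphism class determined by the image of $v$, so by \ref{sp:unicita} (the curve-level unicity, \ie Remark~\ref{r:unicita}) $S_V$ and $S_{V'}$ are projectively equivalent, in fact both equal to the unique such surface — which is $S$ itself, since $S$ is one such surface. Thus every hyperplane section of $V$ through $S$ equals $S$... which is absurd unless we argue more carefully: rather, one shows $S_V \cong S_{V'} \cong S$ forces $V$ and $V'$ to have the same ideal sheaf in a formal neighbourhood of $S$, and then invokes the arithmetically Gorenstein / normal structure (the threefold is a Gorenstein canonical Fano when the singularities are mild, cf.\ \ref{p:prokhorov}, but here one wants the purely extension-theoretic statement). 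The cleanest route is to observe that Wahl's construction \cite[Thm.~7.1]{wahl97}, as used in \S~\ref{S:extension}, already produces the extension as the \emph{unique} one with prescribed ribbon, and that this uniqueness, being a statement about the extension functor of $S \subset \P^g$, is inherited from the curve case by the general-hyperplane-section argument together with the fact that $\H^0(N_{S/\P^g}(-1)) \to \H^0(N_{C/\P^{g-1}}(-1))$ is injective (which itself follows from $h^1(N_{S/\P^g}(-2)) = h^1(\dots)$ vanishings, compare Lemma~\ref{l:fighissimo} and the cohomological computations of \S~\ref{S:cohom}).

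Concretely I would organize it as: (1) recall the ribbon/extension-class dictionary for $S$ and the restriction map to $C$; (2) prove the restriction map $\H^1(T_S \otimes L\dual) \hookrightarrow \ker(\trsp\Phi_C)$ is injective (cohomology of the conormal sequence of $C \subset S$); (3) deduce existence of $V$ by cutting the universal extension $X$ of $C_0$ with the appropriate linear space, checking the numerical data $\dim V = 3$, $V$ arithmetically Gorenstein, $\omega_V = \O_V(-1)$ pass down from those of $X$; (4) prove unicity by the general-hyperplane-section reduction to curve-level unicity plus the injectivity of $\H^0(N_{S/\P^g}(-1)) \to \H^0(N_{C/\P^{g-1}}(-1))$, so that a surface extension of $C_0$ determines the threefold extension of $S$ uniquely. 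The delicate points are the two injectivity statements in steps (2) and (4), which require the Clifford-index hypothesis $\Cliff(S,L) > 2$ precisely to kill the relevant $\H^1$'s (and to guarantee $C$ is non-tetragonal so that Theorem~\ref{t:w+abs} applies), and the bookkeeping to ensure the linear section of $X$ we pick really does have $S$ — not just some ribbon over $C_0$ — as its hyperplane section, which is where one uses that $S$ is itself smooth and appears among the linear sections of $X$.
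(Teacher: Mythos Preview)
Your overall strategy --- use the universal extension $X$ of a smooth hyperplane section $C$, together with Corollary~\ref{c:intrsting-rmk} to match dimensions, and reduce unicity to the curve case via Remark~\ref{r:unicita} --- is exactly the paper's approach. But you are missing the one geometric observation that makes both halves go through cleanly, and without it your steps (2) and (4) do not work as written.

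The missing ingredient is this (paragraph~\ref{p:hyp-princ} in the paper): a ribbon $\tilde S$ over $S$ in $\P^{g+1}$, when cut by the \emph{pencil} of hyperplanes through $\langle C\rangle$, produces not a single ribbon over $C$ but a \emph{line} in $\P(\ker(\trsp\Phi_C))$ passing through $[2C_S]$. Your ``restriction-of-ribbons map $\H^1(T_S\otimes L\dual)\to\ker(\trsp\Phi_C)$'' is therefore not a well-defined single map: it depends on the choice of hyperplane, and the conormal sequence of $C\subset S$ gives a map $\H^1(T_C(-1))\to\H^1(T_S|_C(-1))$ in the wrong direction anyway. The correct target is $\P(\ker(\trsp\Phi_C))/[2C_S]$, which has projective dimension $r-1=h^1(T_S(-1))-1$. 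With this in hand, the paper considers the composite
\[
\psi_S:\ \P^{g+r}/\langle S\rangle\ \longrightarrow\ \P(\H^1(T_S(-1)))\ \longrightarrow\ \P(\ker(\trsp\Phi_C))/[2C_S],
\]
notes all three spaces are $(r-1)$-dimensional, proves $\psi_S$ surjective directly from the universality of $X$ (given $[\tilde C]\neq[2C_S]$, take $\Gamma$ with $2C_{X\cap\Gamma}=\tilde C$ and set $\Lambda=\langle\Gamma,S\rangle$), and concludes both arrows are isomorphisms --- in particular the first, which is existence. No separate injectivity lemma is needed.

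For unicity your first attempt fails for the reason you saw: the only hyperplane of $\P^{g+1}$ containing $S$ is $\langle S\rangle$ itself. The paper instead cuts $V$ and $V'$ by hyperplanes through $\langle C\rangle$: since $2S_V$ and $2S_{V'}$ are proportional they determine the same point of $\P(\ker(\trsp\Phi_C))/[2C_S]$, hence the same pencil of ribbons over $C$, hence by Remark~\ref{r:unicita} the same pencil of surfaces; two threefolds sharing a pencil of hyperplane sections coincide. Your alternative via injectivity of $\H^0(N_{S/\P^g}(-1))\to\H^0(N_{C/\P^{g-1}}(-1))$ is true (it is how Proposition~\ref{prop:ineq} is proved) and could be made into a proof --- indeed combined with Corollary~\ref{c:intrsting-rmk} it gives that this map is an \emph{isomorphism}, from which one can rerun the Wahl-type argument of \ref{p:unic-pf} directly on $S$ using $\H^0(N_{S/\P^g}(-2))=0$ --- but that is a different route from the paper's and you would have to spell it out rather than gesture at it.
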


\bigskip
As in Theorem~\ref{t:w+abs}, if $v \neq 0$ in the above statement,
then $V$ is not a cone. In particular, a polarized $K3$ surface
$(S,L)$ with $g \geq 11$ and $\Cliff(S,L)>2$
lies on a Fano threefold (with canonical Gorenstein
singularities, see \ref{p:prokhorov}) if and only if
$\H^1(T_S \otimes L\dual) \neq 0$.

The necessary background on ribbons is given in
\S~\ref{S:ribbons}, and the proof of Theorem~\ref{t:intK3}
in \S~\ref{S:intK3};
it relies on the existence of a universal extension for canonical
curves (see \ref{p:univ-ext}) and on Corollary~\ref{c:intrsting-rmk}.
Next, Theorems~\ref{thm:cds2K3} and \ref{thm:cdsK3} are the exact
analogues for $K3$ surfaces of Theorems~\ref{thm:cds2} and
\ref{thm:cds}.

\begin{theorem}
\label{thm:cds2K3}
Let $(S,L) \in \K_g$ be a polarized $K3$
surface 
with Clifford index $\Cliff(S,L)>2$. We consider the following two
propositions: 
\begin{noindparaenum}
\item \label{corkK3} $h^1\bigl( T_S \otimes L\dual \bigr) \geq r$;
\item \label {extK3} there is an arithmetically Gorenstein normal
variety $X$ in $\P^ {g+r}$, with $\dim(X)=r+2$, $\omega_X=\O_X(-r)$,
$X$ not a cone, having the image of $S$ by the linear system $|L|$ as
a section with a linear subspace of dimension $g$.
\end{noindparaenum}
\par If $g \geq 11$, then \eqref {corkK3} implies \eqref {extK3}.
Conversely, if $g \geq 22$
then
\eqref{ext} implies \eqref{cork}. 
\end{theorem}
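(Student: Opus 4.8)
The strategy is to follow the template of Theorem~\ref{thm:cds2}, transposing everything from canonical curves to $K3$ surfaces. For the implication \eqref{corkK3}$\Rightarrow$\eqref{extK3}, one proceeds inductively on $r$, the base case $r=1$ being Theorem~\ref{t:intK3} (integration of a single ribbon $v \in \H^1(T_S \otimes L\dual)$ to a threefold $V \subset \P^{g+1}$). The key point is again that the relevant extension construction works in families: the space $\H^1(T_S \otimes L\dual)$ parametrises infinitesimal threefold extensions of $S$, and assuming $\dim \H^1(T_S\otimes L\dual)\geq r$, we want to assemble $r$ independent such ribbons into a genuine $(r+1)$-dimensional arithmetically Gorenstein extension $X$. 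Concretely, I would invoke the $K3$-analogue of Wahl's extension theorem (the analogue of \cite[Theorem~7.1]{wahl97} combined with \cite{abs1} that underlies Theorem~\ref{t:intK3}), and run it relatively over a base parametrising a flag of linear subspaces, exactly as sketched in \S~\ref{S:extension} for curves; the normality, the Gorenstein property, and the computation $\omega_X = \O_X(-r)$ are then inherited from the curve case via iterated hyperplane sections (each hyperplane section of $X$ through $S$ is, by induction, the corresponding extension of a hyperplane section of $S$, which is a canonical curve, and one uses \ref{p:univ-ext} and Corollary~\ref{c:intrsting-rmk} to match up the numerical data).

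For the converse, \eqref{extK3}$\Rightarrow$\eqref{corkK3} when $g \geq 22$: here the statement as written in the excerpt literally says ``\eqref{ext} implies \eqref{cork}'', which is a typo for ``\eqref{extK3} implies \eqref{corkK3}''. The argument is a direct application of Lvovski's Theorem~\ref{thm:lv} to $S \subset \P^g$, in parallel with the proof of Theorem~\ref{thm:cds2}. One computes $\alpha(S) = h^0(N_{S/\P^g}(-1)) - g - 1$ and identifies it with $h^1(T_S \otimes L\dual)$: indeed, for a $K3$ surface the normal bundle sequence $0 \to T_S \to T_{\P^g}|_S \to N_{S/\P^g} \to 0$ twisted by $\O_S(-1)=L\dual$, together with the Euler sequence and the vanishing $\H^i(S,\O_S)=\H^i(S,L\dual)=0$ for $i\leq 1$ (Kodaira vanishing / $K3$ cohomology) and $\H^0(T_S)=0$, yields $h^0(N_{S/\P^g}(-1)) = h^0(T_{\P^g}|_S(-1)) + h^1(T_S\otimes L\dual) = (g+1) + h^1(T_S\otimes L\dual)$, hence $\alpha(S) = h^1(T_S \otimes L\dual)$. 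One then needs the bound $\alpha(S) \leq$ something like $20$ (the analogue for surfaces of Corollary~\ref{c:bnd-alpha}), so that the hypothesis $\alpha(S) < g$ of Theorem~\ref{thm:lv} holds as soon as $g \geq 22$; this bound should follow from the same finiteness input (Proposition~\ref{pr:finite}) used for the curve case, or can be deduced from Corollary~\ref{c:intrsting-rmk} which ties $h^1(T_S(-1))$ to $\cork(\Phi_C)-1 \leq 19$ for a smooth hyperplane section $C$. Then Lvovski gives: if $S$ is $(r+1)$-extendable and $\alpha(S)<g$, then $r+1 \leq \alpha(S)$, i.e.\ $h^1(T_S\otimes L\dual) \geq r+1 \geq r$, which is \eqref{corkK3}.

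\textbf{Main obstacle.} The genuinely hard direction is \eqref{corkK3}$\Rightarrow$\eqref{extK3}: checking that the extension construction globalises correctly, that the iterated hyperplane sections of the putative $X$ through $S$ are precisely the universal extensions of the canonical curves $C \in |L|$ furnished by \ref{p:univ-ext}, and that the cohomological hypothesis $h^1(T_S \otimes L\dual)\geq r$ (rather than $\cork(\Phi_C)\geq r$) is the right normalisation --- note the ``off by one'' between Theorem~\ref{thm:cds2}\eqref{cork} ($\cork(\Phi_C)\geq r+1$) and Theorem~\ref{thm:cds2K3}\eqref{corkK3} ($h^1\geq r$), which is exactly the shift recorded in Corollary~\ref{c:intrsting-rmk}. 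The converse direction is comparatively routine once the identification $\alpha(S)=h^1(T_S\otimes L\dual)$ and the bound $\alpha(S)\leq 20$ are in hand; I expect no surprises there. In the write-up I would therefore state the forward implication as a corollary of the family version of the $K3$ extension theorem (the surface-analogue of Corollary~\ref{cor:ext}), and dispatch the converse in a few lines as above.
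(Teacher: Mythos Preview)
Your converse direction is fine and essentially matches the paper's, modulo one slip: condition \eqref{extK3} makes $S$ only $r$-extendable (the dimension jumps from $2$ to $r+2$), not $(r+1)$-extendable; Lvovski then gives $r\leq\alpha(S)=h^1(T_S\otimes L\dual)$, which is exactly \eqref{corkK3}. Your identification $\alpha(S)=h^1(T_S\otimes L\dual)$ is the content of Lemma~\ref{l:wahl1}, and the bound via Corollary~\ref{c:intrsting-rmk} is correct. (In fact your direct application of Lvovski to $S\subset\P^g$ would yield the converse already for $g\geq 21$; the paper's stated bound $g\geq 22$ comes from routing through the curve case.)

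For the forward direction, however, you are proposing to rebuild the whole extension machinery on the $K3$ side (a relative version of Wahl's construction over a base parametrising ribbons in $\H^1(T_S\otimes L\dual)$, with Theorem~\ref{t:intK3} as base case). This is not wrong in spirit, but it is a substantial detour: the paper does \emph{not} redo any construction for $K3$ surfaces. Instead, the proof is a two-line reduction to the curve case. Pick a smooth hyperplane section $C\in|L|$; by Corollary~\ref{c:intrsting-rmk}, $\cork(\Phi_C)=h^1(T_S\otimes L\dual)+1\geq r+1$. Theorem~\ref{thm:cds2} (via \ref{p:univ-ext}) then produces the universal extension $X\subset\P^{g+r}$ of $C$, of dimension $r+2$, not a cone, with the required canonical class. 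The crucial observation --- which you invoke only as a consistency check rather than as the engine --- is that by the universality in \ref{sp:univ-ext}, \emph{every} surface extension of $C$ other than a cone already sits in $X$ as a linear section; in particular $S$ itself does. So $X$ is the sought extension of $S$, and nothing further needs to be built. Your plan would eventually reconstruct this same $X$, but at the cost of redoing \S\ref{S:extension} one dimension up; the paper sidesteps this entirely.
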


\bigskip
% Under the assumptions of Theorem~\ref{thm:cds2K3},
% we see $S$ as embedded in $\P^g$ by the complete linear system $|L|$;
% then, 
By Lemma~\ref{l:wahl1},
one has
\[
\alpha(S) = h^1\bigl( T_S(-1) \bigr)
= h^1\bigl( T_S \otimes L\dual \bigr)
\]
Similar to the curve case,
if \eqref{extK3} holds then
$h^1\bigl( T_S(-1) \bigr) \geq \min(g,r)$
by Lvovski's Theorem~\ref{thm:lv};
in particular, $S$ is extendable if and only if 
$h^1\bigl( T_S(-1) \bigr) >0$.

\begin{proof}[Proof of Theorem~\ref{thm:cds2K3}]
Let $C$ be a smooth hyperplane section of $S \subset \P^g$: it
is a canonical curve of genus $g$ and Clifford index
$\Cliff(C)=\Cliff(S,L)$, and one has
$\cork (\Phi_C) = h^1 (T_S(-1))+1$
by Corollary~\ref{c:intrsting-rmk}.
Then Theorem~\ref{thm:cds2K3} follows at once from
Theorems~\ref{thm:cds2} and \ref{p:univ-ext}.
\end{proof}

\begin{theorem}\label{thm:cdsK3}
Let $(V,S)\in \FS_g$ with $g \geq 11$ and $\Cliff(C)>2$. Then
\[
\dim(\ker (ds_g)_{(V,S)})=\dim(s_g^ {-1}(S))
= h^1\bigl( T_S(-1) \bigr) -1.
\]
\end{theorem}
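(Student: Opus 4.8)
The plan is to mimic, for the pair $(V,S)$, the chain of (in)equalities used in the proof of Theorem~\ref{thm:cds}, replacing the forgetful map $c_g\colon \KC_g \to \M_g$ by $s_g\colon \FS_g \to \K_g$, the curve $C$ on a $K3$ surface $S$ by the $K3$ surface $S$ on a Fano threefold $V$, and the canonical embedding $C \subset \P^{g-1}$ by the embedding $S \subset \P^g = |{-}K_V|$. The target is the string of relations
\[
h^1\bigl(T_S(-1)\bigr) - 1 \ \leq\ \dim\bigl(s_g^{-1}(S)\bigr)\ \leq\ \dim\bigl(\ker(ds_g)_{(V,S)}\bigr)\ =\ h^1\bigl(T_V(-1)\bigr)\ \leq\ h^1\bigl(T_S(-1)\bigr) - 1,
\]
so that all four quantities coincide; note $h^1(T_S \otimes L\dual) = h^1(T_S(-1))$ by Lemma~\ref{l:wahl1}. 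First I would establish the rightmost inequality $h^1(T_V(-1)) \leq h^1(T_S(-1)) - 1$: restrict to $S \in |{-}K_V|$ and use the normal bundle sequence $0 \to T_V(-K_V)\dual \to \cdots$, i.e.\ the sequence $0 \to T_V(-1) \otimes \O_V(-S) \to T_V(-1) \to (T_V(-1))|_S \to 0$ together with $\O_V(-S) = \O_V(K_V) = \O_V(-1)$ and Kodaira--Akizuki--Nakano-type vanishing $H^1(T_V \otimes \O_V(-2)) = H^2(T_V \otimes \O_V(-2)) = 0$ on the Fano threefold $V$, to identify $H^1(T_V(-1))$ with a subspace of $H^1((T_V(-1))|_S)$; then the sequence $0 \to T_S(-1) \to (T_V)|_S(-1) \to N_{S/V}(-1) = \O_S \to 0$ (using $N_{S/V} = \O_S(-K_V)|_S = L$, so $N_{S/V}(-1) = \O_S$) and $h^0(\O_S) = 1$, $h^1(\O_S) = 0$ give $h^1((T_V)|_S(-1)) \leq h^1(T_S(-1)) + 1$ — but one must be careful, the $+1$ must be absorbed by the kernel of $H^1(T_V(-1)) \to H^1((T_V)|_S(-1))$ being all of it, or equivalently by showing the connecting map $H^0(\O_S) \to H^1(T_S(-1))$ hits the class that would otherwise contribute; the cleanest route is to compare directly with the curve statement, intersecting with a further hyperplane to land on a canonical curve $C = S \cap H$ and invoking Proposition~\ref{prop:ineq} as cited in the proof of Theorem~\ref{thm:cds}, since $h^1(T_V(-1)) \leq h^1(T_S(-1)) \leq \cork(\Phi_C) - 1$ and $h^1(T_S(-1)) = \cork(\Phi_C) - 1$ by Corollary~\ref{c:intrsting-rmk}.

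Next, the equality $\dim(\ker(ds_g)_{(V,S)}) = h^1(T_V(-1))$ is the Fano analogue of Lemma~\ref{l:sernesi}: the fibre of $s_g$ over $S$ parametrizes, near $(V,S)$, the deformations of the pair $(V, S \in |{-}K_V|)$ that keep $S$ fixed, so its tangent space is identified with $H^1$ of the sheaf of vector fields on $V$ tangent to (in fact, the question is deformations of $V$ together with the anticanonical section, fixing $S$); by the standard deformation theory of a variety with a fixed divisor this is $H^1(T_V \langle S\rangle)$, the log tangent sheaf, and since $S \in |{-}K_V|$ one has an exact sequence relating $T_V\langle S\rangle$ to $T_V \otimes \O_V(-S) = T_V(-1)$ up to the twist by $N_{S/V}$; more precisely $T_V\langle -\log S\rangle = T_V(-S)$ twisted appropriately gives $H^1(T_V\langle -\log S\rangle) \cong H^1(T_V(-1))$ after the vanishing of the relevant $H^0$ and $H^1$ of restriction terms on $S$ (again $H^0(\O_S)$ and $H^1(\O_S)$ computations). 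I would cite Lemma~\ref{l:sernesi} in its natural generality here, or the reference it points to, rather than redoing it. The middle inequality $\dim(s_g^{-1}(S)) \leq \dim(\ker(ds_g)_{(V,S)})$ is automatic: the dimension of a scheme at a point is at most the dimension of its Zariski tangent space there.

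Finally, the leftmost inequality $h^1(T_S(-1)) - 1 \leq \dim(s_g^{-1}(S))$ — equivalently $\cork(\Phi_C) - 2 \leq \dim(s_g^{-1}(S))$ — should follow, as in the curve case where Corollary~\ref{cor:palese-vero} supplies $\cork(\Phi_C) - 1 \leq \dim(c_g^{-1}(C))$, from the universal extension: by \ref{p:univ-ext} applied to the canonical curve $C \subset \P^{g-1}$, there is an arithmetically Gorenstein normal variety $X$ of dimension $r+2$ in $\P^{g+r}$ with $r = \cork(\Phi_C) - 1$, not a cone, containing $C$ as a linear section; when $S$ is a $K3$ hyperplane section of $X$, the sections of $X$ by linear subspaces of dimension $g+1$ containing $S$ are Fano threefolds of genus $g$ (see \ref{p:prokhorov}), and they sweep out a family of dimension $r - 1 = \cork(\Phi_C) - 2 = h^1(T_S(-1)) - 1$ inside $s_g^{-1}(S)$ (the Grassmannian of $(g+1)$-planes through the fixed $g$-plane $\langle S\rangle$ inside the fixed $(g+r)$-space has dimension $r - 1$, and distinct such planes give non-isomorphic pairs $(V,S)$ up to the unicity in \ref{p:univ-ext}). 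The main obstacle is precisely this last step: making rigorous that these linear sections are genuinely non-isomorphic as pairs and form a subvariety of the expected dimension of the fibre $s_g^{-1}(S)$, i.e.\ transporting the curve-level statement Corollary~\ref{cor:palese-vero} to the $K3$/Fano level; but this is exactly parallel to how Theorem~\ref{thm:cds} was proved, so I expect it to go through by the same argument with $(C \subset S \subset X)$ in place of $(C \subset X)$ and invoking Corollary~\ref{c:intrsting-rmk} to match the Wahl corank with $h^1(T_S(-1)) + 1$.
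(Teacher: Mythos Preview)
Your overall strategy---the four-term chain of (in)equalities, invoking Corollary~\ref{cor:palese-vero}, Lemma~\ref{l:sernesi}, and Proposition~\ref{prop:ineq}---is exactly the paper's proof; indeed, each of these three auxiliary results is stated in the paper with both the curve/$K3$ and the $K3$/Fano versions side by side, so the proof of Theorem~\ref{thm:cdsK3} is literally the same chain with the ``resp.'' parts substituted in.

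There is one wobble worth flagging: your treatment of the rightmost inequality $h^1(T_V(-1)) \leq h^1(T_S(-1)) - 1$ is muddled. Your tangent-sheaf restriction argument only yields $h^1(T_V(-1)) \leq h^1(T_S(-1)) + 1$ (as you yourself notice), and your fallback via a curve section $C$ and Corollary~\ref{c:intrsting-rmk} only gives $h^1(T_V(-1)) \leq h^1(T_S(-1))$, still losing the crucial $-1$. The fix is simply to cite Proposition~\ref{prop:ineq} as stated: it already contains the Fano inequality $h^1(T_V(-1))+1 \leq h^1(T_S(-1))$ directly, and its proof goes via the \emph{normal} bundle identification $N_{S/\P^g} \cong \restr{N_{V/\P^{g+1}}}{S}$ together with the vanishing $\H^0(N_{V/\P^{g+1}}(-2))=0$ from Lemma~\ref{l:fighissimo} and the translation of Lemma~\ref{l:wahl1}, not via tangent sheaves.
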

\vspace{\mnegvcorr}

% \begin{proof}%[Proof of Theorem~\ref{thm:cdsK3}]
% It is exactly the same as that of Theorem~\ref{thm:cds}.
%  with
% Theorem~\ref{t:intK3} instead of Theorem~\ref{t:w+abs}.
% One has:
% \begin{alignat*}{2}
% h^ 1(T_S(-1))-1 &\leq   \dim(s_g^ {-1}(S))
% &\qquad& \text{by Corollary~\ref{cor:palese-veroK3}} \\
% %Theorem~\ref{t:intK3} and Lemma~\ref{lem:sciocco}} \\
% &\leq \dim(\ker (ds_g)_{(V,S)})
% &\qquad& \text {obviously}\\
% &= h^ 1(T_V(-S))
% &\qquad& \text{by Lemma~\ref {l:sernesi}} \\
% &\leq h^ 1(T_S(-1))-1
% &\qquad& \text{by Proposition~\ref {prop:ineq2}.}
% \end{alignat*}
% \end{proof}

The proof of Theorem~\ref{thm:cdsK3}
is exactly the same as that of Theorem~\ref{thm:cds}.
In analogy with Corollary~\ref{c:intrsting-rmk}, it gives 
\begin{equation}
\label{eq:intrsting-rmkK3}
h^1(T_S(-1))-1 = h^ 1(T_V(-S))= h^{2,1}(V) = b_3(V)/2.
\end{equation}
Theorem~\ref{thm:cdsK3} is closely related to the following
result. 

\begin{theorem}[(Beauville, \cite{beauville-fano})]%
\label{t:beauville}
The morphism $s_g^\R: \FS_g^\R \to \K_g^\R$ is smooth and 
dominant. Its relative dimension at the point $(V,S)$ is 
$b_3(V)/2$.
\end{theorem}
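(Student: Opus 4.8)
\textbf{Proof proposal for Theorem~\ref{t:beauville}.}

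The plan is to realise this as a consequence of deformation theory applied fibrewise to the forgetful map $s_g^\R$, essentially following Beauville but phrasing it in the language already set up above. The object $(V,S)\in\FS_g^\R$ gives a short exact sequence $0\to T_V(-S)\to T_V \to \restr{T_V}{S}\to 0$ on $V$, and the normal bundle exact sequence $0 \to T_S \to \restr{T_V}{S} \to N_{S/V}\to 0$ on $S$, with $N_{S/V}\cong \O_S(S)\cong \restr{-K_V}{S}\cong L$. The tangent space to $\FS_g^\R$ at $(V,S)$ is $\H^1$ of the sheaf of infinitesimal automorphisms of the pair, and the differential $(ds_g^\R)_{(V,S)}$ to the tangent space $\H^1(T_S)$ of $\K_g^\R$ at $(S,L)$ fits into an exact sequence whose relevant connecting maps are governed by the cohomology of $T_V(-S)$ and $\restr{T_V}{S}$. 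First I would record the vanishing statements: since $V$ is Fano with $-K_V$ ample and $S\in|-K_V|$, Kodaira--Nakano (and the fact that $S$ is a $K3$ surface with $\omega_S\cong\O_S$) give $\H^i(T_V)=0$ for $i\ge 1$, $\H^0(T_V(-S))=\H^1(T_V(-S))=0$ for the first because $-S=K_V$ is anti-ample (so $T_V(-S)=T_V\otimes\omega_V$ has no sections, its $\H^2$ dual being $\H^0(\Omega^1_V)=0$ for Fano), and the key surviving group is $\H^2(T_V(-S))\cong \H^1(\Omega^1_V)\dual$ by Serre duality — wait, more precisely $\H^2(T_V\otimes\omega_V)\cong \H^1(\Omega^1_V)\dual\cong \H^{1,1}(V)\dual$; I will need instead the piece measuring deformations of $V$ fixing the polarisation, which is $\H^1(T_V)=0$, and the obstruction-type term $\H^2(T_V(-S))$ which by Serre duality is dual to $\H^1(\Omega^1_V(S))=\H^1(\Omega^1_V(-K_V))$, and this is what contributes $b_3(V)/2$.

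The cleaner route, and the one I would actually take, is to argue as follows. Restricting the Euler-type sequence $0\to T_V(-S)\to T_V\to \restr{T_V}{S}\to 0$ and taking cohomology, using $\H^i(T_V)=0$ for $i\ge 1$ and $\H^0(T_V)\twoheadrightarrow \H^0(\restr{T_V}{S})$ (which one checks, or which follows since $\H^1(T_V(-S))=0$), one gets $\H^1(\restr{T_V}{S})\cong \H^2(T_V(-S))$ and $\H^2(\restr{T_V}{S})\cong \H^3(T_V(-S))\cong \H^0(\Omega^1_V)\dual=0$. Next, from the normal bundle sequence $0\to T_S\to\restr{T_V}{S}\to L\to 0$ on $S$ and the vanishing $\H^0(L)\ne 0$ but rather using $\H^1(L)=0$ (Kodaira vanishing on the $K3$, or $L$ ample and $L^2>0$, $g\ge 2$) one obtains $\H^1(T_S)\hookrightarrow \H^1(\restr{T_V}{S})$ with cokernel inside $\H^2(T_S)$; and $\H^0(\restr{T_V}{S})\to\H^0(L)$ has image and kernel to be analysed — but $\H^0(T_S)=0$ since $S$ is $K3$, so $\H^0(\restr{T_V}{S})\hookrightarrow\H^0(L)$. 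The deformation theory of the pair then says $\ker(ds_g^\R)_{(V,S)}$ is the quotient of $\H^1$ of the pair by $\H^1(T_S)$, and its dimension equals $\dim\H^1(\restr{T_V}{S}) - \dim\H^1(T_S) + (\text{contribution from }\H^0)$; assembling the above isomorphisms this collapses to $\dim\H^2(T_V(-S)) - \dim(\text{image of }\H^1(T_S)) = h^1(\Omega^1_V(-K_V)\dual\text{-type term})$, which one identifies with $h^{2,1}(V)=b_3(V)/2$ via Hodge theory on the Fano threefold $V$ (the Hodge numbers of a Fano threefold satisfy $h^{1,0}=h^{2,0}=h^{3,0}=0$, so $b_3(V)=2h^{2,1}(V)$). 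Smoothness of $s_g^\R$ is then the statement that the obstruction space for deforming the pair over that for deforming $(S,L)$ vanishes, i.e.\ that $\H^2$ of the relative complex vanishes; this reduces to $\H^2(T_V(-S))$ being exactly the tangent direction (no obstruction beyond) together with $\H^3(T_V(-S))=0$, which we established. Dominance follows because the generic $(S,L)\in\K_g^\R$ in the image has, by the $\R$-polarisation constraint, a model sitting on a Fano $V$; more honestly, dominance in Beauville's setup is built into the choice of $\R$ so that the lattice forces the existence of the Fano extension, and I would simply cite that this is how $\FS_g^\R$ and $\K_g^\R$ are set up (or invoke Theorem~\ref{t:intK3}: a polarised $K3$ with $\H^1(T_S\otimes L\dual)\ne0$ integrates to a Fano threefold, and the $\R$-polarisation guarantees this nonvanishing).

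The main obstacle I anticipate is the bookkeeping of the $\H^0$ terms in the deformation sequence of the pair $(V,S)$ — that is, correctly tracking which infinitesimal automorphisms of $V$ are killed upon restriction to $S$ and how this shifts the count — and, relatedly, giving a clean proof of the surjectivity $\H^0(T_V)\twoheadrightarrow\H^0(\restr{T_V}{S})$ and the vanishing $\H^0(\restr{T_V}{S})\to\H^0(L)$ having the expected kernel; these are the standard but delicate points in pair-deformation arguments. Once those are pinned down, the identification of $\H^1(\restr{T_V}{S})/\H^1(T_S)$ with $\H^2(T_V(-S))\cong H^{2,1}(V)$ and hence with $b_3(V)/2$ is routine Hodge theory on Fano threefolds, and smoothness is immediate from the vanishing $\H^2(T_V)=\H^3(T_V(-S))=0$ already in hand. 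Alternatively — and this may be the shortest path of all — I would note that Theorem~\ref{thm:cdsK3} together with equation~\eqref{eq:intrsting-rmkK3} gives exactly $\dim(s_g^{-1}(S)) = h^1(T_V(-S)) = b_3(V)/2$ fibrewise, and since the $\R$-polarised version only rigidifies the Picard lattice without changing the infinitesimal picture along the fibres of $s_g$, Theorem~\ref{t:beauville} follows from Theorem~\ref{thm:cdsK3} by restricting to $\R$-polarised families and observing that the generic fibre dimension, smoothness, and dominance are all inherited; the one genuinely new input over Theorem~\ref{thm:cdsK3} is dominance, which I would handle by the lattice-theoretic surjectivity of periods (Torelli for $K3$ surfaces) combined with Theorem~\ref{t:intK3}.
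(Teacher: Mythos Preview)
The paper does not prove Theorem~\ref{t:beauville}: it is stated with attribution to Beauville \cite{beauville-fano} and used as an external input, so there is no ``paper's own proof'' to compare against. That said, your proposal deserves comment on its own merits.

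Your direct deformation-theoretic approach contains a genuine error: you assert $\H^i(T_V)=0$ for all $i\geq 1$, but $\H^1(T_V)$ is the tangent space to deformations of $V$ and is typically nonzero for Fano threefolds (indeed $h^1(T_V)=\dim\F_g^\R$ at a general point). This breaks the isomorphism $\H^1(\restr{T_V}{S})\cong\H^2(T_V(-S))$ you rely on. The correct bookkeeping must track $\H^1(T_V)$ through the long exact sequence, and the identification of the relative tangent space with $h^{2,1}(V)$ is more delicate than you suggest --- Beauville's actual argument in \cite{beauville-fano} handles this carefully. Your identification $\H^2(T_V(-S))\cong\H^1(\Omega^1_V)\dual$ by Serre duality is also off: $T_V(-S)=T_V\otimes\omega_V$, so $\H^2(T_V\otimes\omega_V)\cong\H^1(\Omega^1_V)\dual$ gives $h^{1,1}(V)$, not $h^{2,1}(V)$; the group you want is $\H^1(T_V(-S))$, whose Serre dual is $\H^2(\Omega^1_V)=\H^{2,1}(V)$.

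Your alternative route via Theorem~\ref{thm:cdsK3} and \eqref{eq:intrsting-rmkK3} is logically sound as far as it goes --- the paper does derive \eqref{eq:intrsting-rmkK3} from Theorem~\ref{thm:cdsK3}, independently of Beauville --- but it only yields the fibre-dimension statement under the extra hypotheses $g\geq 11$ and $\Cliff(S,-\restr{K_V}{S})>2$, whereas Beauville's theorem has no such restriction. Moreover, Theorem~\ref{thm:cdsK3} gives equality of fibre dimension and kernel dimension of the differential, which is weaker than smoothness of the morphism; and your proposed argument for dominance (via Theorem~\ref{t:intK3} plus Torelli) again imports the $g\geq 11$, $\Cliff>2$ hypotheses. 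So this route recovers only a restricted version of Beauville's result.
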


% Note that, as Beauville observes, a corollary of
% Theorem~\ref{t:beauville} is that a general $K3$ surface with Picard
% lattice $\R$ is an anticanonical divisor in a Fano threefold if and
% only if $\R \cong (\Pic(V), -K_V)$ for some Fano threefold $V$
% \cite[Cor.~4.1]{beauville-fano};
% however, in most cases the map $s_g^\R$ is not surjective
% \cite[(4.2)]{beauville-fano}.

% \vspace{-4mm}

% \bigskip
% \begin{remark}
% The previous proof gives the equality
% \setcounter{equation}{\value{subparagraph}}
% \begin{equation}
% \label{eq:intrsting-rmkK3}
% h^ 1(T_S(-1))-1 = h^ 1(T_V(-S)),
% \end{equation}
% in analogy with Corollary~\ref{c:intrsting-rmk}.
% On the other hand, the two sheaves 
% $T_V(-S)$ and $\Omega^2_V$ are isomorphic, so that 
% \begin{equation}
% \label{eq:hdg-nmbrsFano}
% h^ 1(T_V(-S)) = h^{2,1}(V) = b_3(V)/2.
% \end{equation}
% \end{remark}

\bigskip
Beauville {\cite[(4.4)]{beauville-fano}}
%\label{q:beauville} 
asked:
{\itshape For those families of Fano threefolds for which $b_3=0$, the map
$s_g^\R$ is étale; is it an isomorphism
onto an open substack of $\K_g^\R$?}
We give the following answer.
% under our usual
% assumptions on the genus and Clifford index.  

\begin{proposition}
\label{pr:q:Beauville}
Let $(V,S) \in \FS_g$ be such that $g \geq 11$, 
$\Cliff (S, -\restr {K_V} S) >2$, and $b_3(V)=0$.
The fibre $(s_g^\R)^{-1}(S)$ is reduced to a point 
if and only if there is no non-trivial automorphism of $V$ induced by
a projectivity of $|-K_V|$.
\end{proposition}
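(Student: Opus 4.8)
\textbf{Proof strategy for Proposition~\ref{pr:q:Beauville}.}
The plan is to combine Beauville's Theorem~\ref{t:beauville} with the infinitesimal and global analysis of the fibre of $s_g$ provided by Theorem~\ref{thm:cdsK3}. Since $b_3(V)=0$, equation~\eqref{eq:intrsting-rmkK3} gives $h^1(T_S(-1))=1$, so by Theorem~\ref{thm:cdsK3} the fibre $s_g^{-1}(S)$ is zero-dimensional, and moreover $\ker(ds_g)_{(V,S)}=0$, i.e.\ $s_g$ is unramified at every point of this fibre; by Theorem~\ref{t:beauville} the same holds for $s_g^\R$, which is moreover étale over $\K_g^\R$ in this situation. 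Thus $(s_g^\R)^{-1}(S)$ is a finite reduced set of points, and the question is purely about when it is a singleton.

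The key reduction is that two pairs $(V,S)$ and $(V',S')$ lying in the same fibre over a fixed $\R$-polarized $K3$ surface $(S,L)$ with $L=-\restr{K_V}S=-\restr{K_{V'}}{S'}$ are, by the uniqueness part of Theorem~\ref{t:intK3}, both obtained by integrating a ribbon $v\in \H^1(T_S\otimes L\dual)$; since that space is one-dimensional, all such threefolds arise from the one-dimensional family of ribbons $\{[v]\}$, which is a single point of $\P(\H^1(T_S\otimes L\dual))$, and Theorem~\ref{t:intK3} says the threefold integrating it in $\P^{g+1}$ is unique up to projectivities of $\P^{g+1}=\P(\H^0(S,L)\oplus\C)$ fixing $S\subset\P^g$ (equivalently, up to projectivities of $|-K_V|$ restricting to the identity on $|L|$, cf.\ Remark~\ref{r:unicita}). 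Hence any two points of $(s_g^\R)^{-1}(S)$ are represented by $(V,S)$ and $(\varphi(V),\varphi(S))$ for such a projectivity $\varphi$; the fibre is therefore a torsor-like quotient: distinct points correspond exactly to distinct elements of the group $G$ of projectivities of $|-K_V|$ fixing $S$, modulo those that extend to automorphisms of $V$ (which do not change the point of the moduli stack). So $(s_g^\R)^{-1}(S)$ is a single point if and only if every projectivity of $|-K_V|$ preserving $S$ actually comes from an automorphism of $V$.

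Finally I would close the loop with the observation that, since $S$ is an anticanonical section and $g\geq 11$ so that $-K_V$ is very ample with $V$ ideal-theoretically cut out by its quadrics (or at least embedded by $|-K_V|$), an automorphism of $V$ induced by a projectivity of $|-K_V|$ that restricts to the identity on $S$ must be the identity on $V$ (a threefold containing a surface $S$ which is not contained in a hyperplane and on which the automorphism is the identity, with the automorphism linear, is forced to be trivial). Thus the map from $G$ to the set of points of $(s_g^\R)^{-1}(S)$ has kernel exactly the image of $\mathrm{Aut}(V)\cap\PGL(|-K_V|)$ and is surjective by the uniqueness argument above; the fibre is a point precisely when $G$ equals that image, i.e.\ when there is no non-trivial projectively induced automorphism of $V$ — which is the stated condition, upon noting that a projectivity of $|-K_V|$ fixing $S$ is the same as a non-trivial automorphism of $V$ arising from a projectivity of $|-K_V|$ in the presence of the rigidity just discussed.

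\textbf{Main obstacle.} The delicate point is the bookkeeping of automorphisms and projectivities: one must be careful that "projectivity of $\P^{g+1}$ fixing $S\subset\P^g$" in Theorem~\ref{t:intK3} matches "projectivity of $|-K_V|$" in the statement, and that passing to the stack $\K_g^\R$ (rather than a coarse space) does not introduce or suppress points — i.e.\ that the reducedness and the finiteness from Theorems~\ref{t:beauville} and~\ref{thm:cdsK3} genuinely identify the fibre set with the orbit set $G/(\mathrm{Aut}(V)\cap \PGL)$. Verifying that the uniqueness in Theorem~\ref{t:intK3} is exactly uniqueness up to this group, and that no extra identifications occur at the level of the moduli stack, is where the real care is needed.
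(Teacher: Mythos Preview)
Your approach is essentially the paper's: deduce $h^1(T_S(-1))=1$ from \eqref{eq:intrsting-rmkK3}, then invoke the uniqueness in Theorem~\ref{t:intK3} to conclude that $V$ is the only Fano threefold containing $S$, so that points of the fibre correspond to anticanonical divisors in $V$ isomorphic to $S$, and these in turn are governed by projective automorphisms of $V$.

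The paper's proof is considerably leaner than yours, however. It does not invoke Theorem~\ref{thm:cdsK3} or Theorem~\ref{t:beauville} at all: once $h^1(T_S(-1))=1$ is known, Theorem~\ref{t:intK3} alone gives that $V$ is the unique Fano containing $S$, and the paper passes directly from ``fibre has more than one point'' to ``there are several anticanonical divisors in $V$ isomorphic to $S$'' to ``there is a non-trivial projectivity of $\P^{g+1}$ preserving $V$''. Your detour through finiteness and reducedness of the fibre is harmless but unnecessary.

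Where your write-up loses clarity is the group-theoretic bookkeeping. You introduce a group $G$ of ``projectivities of $|-K_V|$ fixing $S$'' without specifying pointwise versus setwise, and then describe the fibre as a quotient $G/(\mathrm{Aut}(V)\cap\PGL)$; this formulation is hard to parse and does not obviously match the statement. The paper sidesteps this entirely: it argues directly that two copies of $S$ inside $V$ force a projective automorphism of $V$ carrying one to the other, which is exactly the condition in the statement. Your rigidity observation (a projectivity fixing $S$ pointwise and preserving $V$ must be trivial) is correct and relevant, but the paper does not spell it out. If you want to keep your more detailed analysis, you should clean up the definition of $G$ and state precisely which quotient set is in bijection with the fibre.
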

%\vspace{\negvcorr}

\begin{proof}
One has
$h^1(T_S(-1)) = 1$
by \eqref{eq:intrsting-rmkK3}.
% \eqref{eq:hdg-nmbrsFano}.
From this we deduce by Theorem~\ref{t:intK3}
that up
to isomorphism $V$ is the only Fano threefold that 
may contain $S$ as an anticanonical divisor.
As a consequence, the fibre $s_g^{-1} (S)$ has cardinality greater
than $1$ if and only if there are several anticanonical divisors in $V$
isomorphic to $S$.
The latter property implies the existence of an automorphism of $V$,
induced by a projectivity of $\P^{g+1}$, that transforms one copy of
$S$ as an anticanonical divisor into another.
\end{proof}

%\subsubsection{Organization of the text}

\section{Gaussian maps and twisted normal bundles}
\label{S:cohom}

% This Section is devoted to generalities 
% % on global sections of  
% % twisted normal bundles of varieties in projective spaces, that will be
% useful throughout the text.

\paragraph{}
Let $X$ be a smooth
%irreducible, projective 
variety and $L$ a line
bundle on $X$.  We consider the multiplication map
\begin{equation}
\label{eq:mult-map}
\mu_{L,\omega_X}: \H^ 0(L)\otimes \H^ 0(\omega_X)\to 
\H^ 0(L\otimes \omega_X),
\end{equation}
whose kernel we denote by $R(L,\omega_X)$. 
If $X$ is a curve $C$, one defines the
$(L,\omega_C)$-\emph{Gaussian map} 
\begin{equation}
\label{eq:gaussian}
\textstyle
\Phi_{L,\omega_C}: R(L,\omega_C)\to \H^ 0(L\otimes \omega_C^ {\otimes 2}) 
\quad \text{by} \quad 
\sum_i s_i \otimes t_i \mapsto \sum_i (s_i \cdot dt_i- t_i\cdot ds_i),
\end{equation}
% % where the expression $\sum_i (s_i \cdot dt_i - t_i\cdot ds_i)$ is
% % defined in local coordinates and makes sense globally if $\sum_i s_i
% \otimes t_i\in R(L,\omega_C)$, see, e.g., \cite {chm88}. 
see \cite {chm88} for more details.
The map $\Phi_{\omega_C,\omega_C}$ 
restricted to $\bigwedge^2 \H^0(\omega_C)$
identifies with the Wahl map $\Phi_C$.

% In particular, if
% $L=\omega_C$, this boils down to considering the \emph{Wahl map}
% \begin{equation*}
% \Phi_C: \textstyle {\bigwedge^ 2 \H^0 (C,\omega_C)}
% \to \H^0(C, \omega_C ^{\otimes 3}),
% \quad \text{defined by} \quad 
% s \wedge t \mapsto s\cdot dt - t \cdot ds.
% \end{equation*}

\begin{lemma}[({\cite{wahl88}}, see also {\cite[Prop.~1.2]{cm90}})]
\label{l:wahl}
Let $C$ be a smooth
% irreducible projective
curve of positive genus and
$L$ a very ample line bundle on $C$. We consider $C\subset \P^
r:=\P(\H^0 (L) \dual)$. Then one has the exact sequence
\begin{equation}
\label{w-exctsq}
0 \to 
\H^0 (L) ^\vee 
\to \H^0 \bigl(N _{C/\P^{r}} \otimes L^ \vee \bigr)
\to \coker (\Phi_{L,\omega_C}) ^\vee
\to 0.
\end{equation}
\end{lemma}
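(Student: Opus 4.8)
The plan is to extract the exact sequence in \eqref{w-exctsq} from the Euler sequence on $\P^r$ restricted to $C$ and twisted by $L\dual$, by identifying each term and map via Gaussian maps. First I would write down the conormal-to-Euler chain. Pulling back the Euler sequence $0 \to \O_{\P^r} \to \H^0(L)\otimes \O_{\P^r}(1) \to T_{\P^r} \to 0$ to $C$ and twisting by $L\dual \cong \O_{\P^r}(-1)|_C$, one gets
\begin{equation*}
0 \to L\dual \to \H^0(L)\otimes \O_C \to \restr{T_{\P^r}}{C}\otimes L\dual \to 0.
\end{equation*}
Taking $\H^0$ and using $\H^0(L\dual)=0$ (positive genus, $L$ very ample so $\deg L>0$), the connecting map lands in $\H^1(L\dual)\cong \H^0(L)\dual$ by Serre duality, and one checks — this is the crux of the identification — that the cokernel of $\H^0(L)\otimes \H^0(\O_C)=\H^0(L)\to \H^0(\restr{T_{\P^r}}{C}\otimes L\dual)$ injects into $\H^0(L)\dual$. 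Actually I would run it the other way: the long exact sequence gives $0\to \H^0(\restr{T_{\P^r}}{C}\otimes L\dual)\to \H^0(L)\otimes\H^1(L\dual)$-type data; more cleanly, $\H^0(\O_C)\otimes\H^0(L)\dual$ appears and the normal bundle sequence $0\to \restr{T_{\P^r}}{C}\otimes L\dual \to N_{C/\P^r}\otimes L\dual \to$ (stuff) has to be spliced in.

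The cleaner route, which I would actually follow, is the standard one (as in \cite{wahl88}, \cite[Prop.~1.2]{cm90}): combine the two exact sequences
\begin{equation*}
0 \to N_{C/\P^r}\dual \to \restr{\Omega^1_{\P^r}}{C} \to \omega_C \to 0,
\qquad
0 \to \Omega^1_{\P^r}(1) \to \H^0(L)\dual\otimes \O_{\P^r} \to \O_{\P^r}(1) \to 0,
\end{equation*}
the second being the dual Euler sequence. Restricting the second to $C$, twisting appropriately, and dualizing, one identifies $\restr{T_{\P^r}}{C}\otimes L\dual$ and then $N_{C/\P^r}\otimes L\dual$ in terms of $\H^0(L)$, $\omega_C$, and $\Omega^1_{\P^r}(1)|_C\otimes L\dual$. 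The key computation is that $\H^0$ of the twisted restricted cotangent-type bundle is exactly $R(L,\omega_C)=\ker\mu_{L,\omega_C}$ — this is where the multiplication map enters — while the boundary map $\H^0(N_{C/\P^r}\otimes L\dual)\to \H^1(\text{something})$ is precisely (the transpose of) the Gaussian map $\Phi_{L,\omega_C}$ composed with Serre duality, so that its cokernel is $\coker(\Phi_{L,\omega_C})\dual$ and the image of the preceding term is $\H^0(L)\dual$ sitting inside via $R(L,\omega_C)\hookrightarrow \H^0(L)\otimes\H^0(\omega_C)$ modulo the diagonal-type relations.

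The main obstacle I expect is the bookkeeping in the second half: correctly threading the conormal sequence through the (restricted, twisted, dualized) Euler sequence so that the connecting homomorphism is literally $\Phi_{L,\omega_C}$ and not merely "a Gaussian-type map", and checking the identification $\H^0(\Omega^1_{\P^r}(1)|_C\otimes L\dual)\cong R(L,\omega_C)$ including that the relevant $\H^1$ vanishings hold (here $\H^1(L\dual)$ is dual to $\H^0(L)$, and one needs $\H^0(L\dual)=0$, both fine for very ample $L$ on a positive-genus curve). Once the diagram is set up, the exactness of \eqref{w-exctsq} is a diagram chase. Since the statement cites \cite{wahl88} and \cite[Prop.~1.2]{cm90}, I would present this as a recollection of that argument, giving the two structural exact sequences and the identification of the maps, and leaving the diagram chase to the reader.
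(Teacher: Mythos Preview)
Your proposal is correct, and in fact the approach you sketch in your first paragraph and then abandon is exactly what the paper does; you should not have switched to the conormal picture. The paper twists the Euler sequence for $T_{\P^r}$ by $L^\vee$, restricts to $C$, and takes cohomology: Serre duality identifies the connecting map $\H^1(L^\vee)\to \H^0(L)^\vee\otimes\H^1(\O_C)$ with $\trsp\mu_{L,\omega_C}$, yielding both an exact sequence $0\to\H^0(L)^\vee\to\H^0(\restr{T_{\P^r}}{C}\otimes L^\vee)\to\ker(\trsp\mu_{L,\omega_C})\to 0$ and an isomorphism $\H^1(\restr{T_{\P^r}}{C}\otimes L^\vee)\cong R(L,\omega_C)^\vee$. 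Then the normal bundle sequence $0\to T_C\to\restr{T_{\P^r}}{C}\to N_{C/\P^r}\to 0$ twisted by $L^\vee$ gives, in cohomology, the sequence $0\to\H^0(\restr{T_{\P^r}}{C}\otimes L^\vee)\to\H^0(N_{C/\P^r}\otimes L^\vee)\to\ker(\trsp\Phi_{L,\omega_C})\to 0$, once one checks that the induced map $\H^1(T_C\otimes L^\vee)\to\H^1(\restr{T_{\P^r}}{C}\otimes L^\vee)$ \emph{is} $\trsp\Phi_{L,\omega_C}$ under the previous identification --- this is the ``bookkeeping'' you anticipate, and it is immediate from the definition of $\Phi_{L,\omega_C}$. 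The positive-genus hypothesis enters only at the end: it makes $\mu_{L,\omega_C}$ surjective, so $\ker(\trsp\mu_{L,\omega_C})=0$ and $\H^0(\restr{T_{\P^r}}{C}\otimes L^\vee)\cong\H^0(L)^\vee$, which collapses the two sequences into \eqref{w-exctsq}.

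Your ``cleaner route'' via the conormal sequence and the dual Euler sequence is just the Serre-dual of this argument and is equally valid, but no cleaner; in particular the identification $\H^0(\restr{\Omega^1_{\P^r}(1)}{C}\otimes\omega_C)\cong R(L,\omega_C)$ you allude to is the dual of the paper's $\H^1(\restr{T_{\P^r}}{C}\otimes L^\vee)\cong R(L,\omega_C)^\vee$.
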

\vspace{\negvcorr}
% \vspace{\negvcorr}

\paragraph{}
\label{p:identifications}
In order to state some identifications worth keeping in mind, 
valid for a curve of arbitrary genus, we
sketch the proof of Lemma~\ref{l:wahl}.
% We let $C$ be a smooth irreducible projective curve of any genus,
% $L$ a very ample line bundle on $C$, 
% %$E=\H^0(C,L)$, 
% and consider $C\subset \P^r:=\P(\H^0 (L) \dual)$.
The Euler exact sequence twisted by $L\dual$, together with Serre
duality, gives the exact sequence
% \[
% 0 \to
% E\dual \to
% \H^0 \bigl(T_{\P^r} \otimes L\dual \bigr) \to
% \H^0(L \otimes \omega_C)\dual 
% \xrightarrow{\trsp {\mu_{L,\omega_C}}}
% E\dual \otimes \H^0(\omega_C)\dual
% \to \H^1 \bigl(T_{\P^r} \otimes L\dual \bigr) 
% \to 0,
% \]
\[
0 \to
\H^0 (L)\dual \to
\H^0 \bigl(T_{\P^r} \otimes L\dual \bigr) \to
\H^0(L \otimes \omega_C)\dual 
\xrightarrow{\trsp {\mu_{L,\omega_C}}}
\H^0 (L)\dual \otimes \H^0(\omega_C)\dual
\to \H^1 \bigl(T_{\P^r} \otimes L\dual \bigr) 
\to 0,
\]
from which it follows that:
\begin{gather}
\label{H^0T_P(-1)}
0 \to \H^0 (L)\dual \to 
\H^0 \bigl( \restr {T_{\P^r}} C \otimes L\dual \bigr)
\to \ker \bigl(\trsp\mu_{L,\omega_C}\bigr) \to 0
\qquad \text{is an exact sequence;} \\
\H^1 \bigl( \restr {T_{\P^r}} C \otimes L\dual \bigr)
\cong
\coker \bigl(\trsp\mu_{L,\omega_C}\bigr)
\cong R(L,\omega_C)\dual.
\end{gather}
Then, $\trsp \Phi_{L,\omega_C}$ identifies with the map
\begin{equation}
\H^1 \bigl( T_C \otimes L\dual \bigr) 
\to \H^1 \bigl( \restr {T_{\P^r}} C \otimes L\dual \bigr)
\end{equation}
induced by the inclusion $T_C \subset \restr {T_{\P^r}} C$.
Eventually, if $C \subset \P^r$ is neither a line nor a conic, 
the normal bundle exact sequence twisted by $L\dual$ gives the exact
sequence
\begin{equation}
\label{exsq-H^0N(-1)-gnl}
0 \to 
\H^0 \bigl( \restr {T_{\P^r}} C \otimes L\dual \bigr)
\to \H^0 \bigl( N_{C/\P^r} \otimes L\dual \bigr)
\to \ker (\trsp \Phi_{L,\omega_C})
\to 0.
\end{equation}
When $C$ has positive genus, the map $\mu_{L,\omega_C}$ is surjective
\cite {arbsern,ciliberto83, green84},
and Lemma~\ref{l:wahl} follows from \eqref{H^0T_P(-1)} and
\eqref{exsq-H^0N(-1)-gnl}.

\paragraph{}
\label{p:inf-autom}
%Consider the situation of Lemma~\ref{l:wahl}.
The space
$\H^0 (N _{C/\P^{r}} \otimes L^ \vee )$
 is the Zariski tangent space to the space of
deformations of $C$ in $\P^r$ fixing a given hyperplane section
$H\cap C$.
The inclusion of $\H^0 (L) ^\vee$ in this space, 
at the left-hand-side of \eqref{w-exctsq}, 
comes from the isomorphism
$\H^0 (L) ^\vee
\cong \H^0(T_{\P^r} \otimes L^\vee)$
given by \eqref{H^0T_P(-1)},
which identifies $\H^0 (L) ^\vee$ with a space of
infinitesimal automorphisms inside 
$\H^0 (N _{C/\P^{r}} \otimes L^ \vee )$.

It is useful in our setup
to express this identification in coordinates.
Fix homogeneous coordinates $(x_0:\ldots:x_r)$ such that $H$ has
equation $x_0=0$. The space $\H^0(T_{\P^r})$ is the tangent space at
the origin to $\PGL_{r+1}$, and
$H^ 0(T_{\P^r} \otimes L^ \vee)$ is the tangent space at
the origin to the subgroup
\[
\C^r \rtimes \C^* < \PGL_{r+1}
\]
of projectivities fixing $H$ point by point.
The latter are given in the affine chart $x_0=1$ by
$x_i\mapsto \lambda x_i+a_i$ ($1 \leq i \leq r$),
with $(a_1,\ldots,a_r) \in \C^r$ and $\lambda \in \C^*$. 
The elements
of $H^ 0(T_{\P^ r}\otimes L^ \vee)$ thus identify with the
infinitesimal automorphisms given in affine coordinates by
$x_i\mapsto (1+t\epsilon_0) x_i+t\epsilon_i$ ($1 \leq i \leq r$), where $t^
2=0$ and $\boldsymbol\epsilon = 
(\epsilon_0,\epsilon_1,\ldots,\epsilon_r) \in \C^{r+1}$.
The isomorphism $\H^0 (L) ^\vee \cong \H^0(T_{\P^r} \otimes L^\vee)$
maps the linear form
$\mathbf{x} \mapsto \boldsymbol \epsilon \cdot \mathbf{x}$ to the latter
infinitesimal automorphism.

\bigskip
The following statement is a higher dimensional version of
Lemma~\ref{l:wahl}, and the proofs of the two go along the same lines.

\begin{lemma}
\label{l:wahl1} Let $X$ be a smooth 
% irreducible projective 
variety of dimension $n\geq 2$ with $h^ 1(\O_X)=0$, and $L$ a very
ample line bundle on $X$, with $(X,L)$ different from $(\P^n, \O_{\P^
n}(1))$.
If $X$ is a surface, we assume in addition that the multiplication map
$\mu_{L,\omega_X}$
% \H^ 0(L)\otimes \H^ 0(\omega_X)\to 
% \H^ 0(L\otimes \omega_X)$ 
% %(see \eqref{eq:mult-map}) 
is surjective.
We consider $X\subset
\P^ r:=\P(\H^0 (L)^ \vee)$. Then one has the exact sequence
\begin{equation}
\label{w-exctsq2}
0 \to 
\H^0 (L) ^\vee 
\to \H^0 \bigl(N _{X/\P^{r}} \otimes L^ \vee \bigr)
\to \H^ 1(T_X\otimes L^ \vee)
\to 0.
\end{equation}
If $n=2$ and $X$ is a K3 surface, then $\mu_{L,\omega_X}$ is surjective,
and moreover
\[
\H^1 \bigl(N _{X/\P^{r}} \otimes L^ \vee \bigr)\cong \H^ 2(T_X\otimes
L^ \vee).
\]
\end{lemma}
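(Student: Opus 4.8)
\textbf{Proof plan for Lemma~\ref{l:wahl1}.}
The plan is to run the same three-step dévissage as in \ref{p:identifications}, replacing the curve $C$ by the higher-dimensional $X$ and keeping track of the extra cohomology. First I would write down the Euler sequence
\[
0 \to \O_X \to \H^0(L)\otimes L\dual \to T_{\P^r}\otimes L\dual \to 0
\]
restricted to $X$ and twisted by $L\dual$. Taking cohomology and using $h^1(\O_X)=0$, together with $h^0(L\dual)=0$ and $h^0(\O_X)=1$ (so the coboundary $\H^0(\O_X)\to\H^0(L)\otimes\H^0(L\dual)=0$ is automatically handled), gives
\[
0 \to \H^0(L)\dual \to \H^0(T_{\P^r}\otimes L\dual) \to \H^1(\O_X) = 0,
\]
hence $\H^0(T_{\P^r}\otimes L\dual)\cong \H^0(L)\dual$, and an injection $\H^1(T_{\P^r}\otimes L\dual)\hookrightarrow \H^1(\O_X)\otimes\H^0(L)$ — but the latter vanishes, so $\H^1(T_{\P^r}\otimes L\dual)=0$ as well when $n\geq 2$ and $X$ is not $(\P^n,\O(1))$. (For a surface one must instead feed in the surjectivity of $\mu_{L,\omega_X}$ exactly as in the curve case, via Serre duality $\H^1(L\otimes\omega_X)\dual\cong\H^0$ of something and $\H^1(T_{\P^r}\otimes L\dual)\cong\coker(\trsp\mu_{L,\omega_X})$.)

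Second, I would plug this into the tangent-bundle sequence $0\to T_X\to \restr{T_{\P^r}}X\to N_{X/\P^r}\to 0$ twisted by $L\dual$. Since $X$ is not a linear space, the conormal sequence is a genuine short exact sequence of bundles, and the long exact cohomology sequence reads
\[
0 \to \H^0(T_X\otimes L\dual) \to \H^0(\restr{T_{\P^r}}X\otimes L\dual) \to \H^0(N_{X/\P^r}\otimes L\dual) \to \H^1(T_X\otimes L\dual) \to \H^1(\restr{T_{\P^r}}X\otimes L\dual) \to \cdots
\]
The first term $\H^0(T_X\otimes L\dual)$ vanishes because $T_X\otimes L\dual$ is a subsheaf of $\restr{T_{\P^r}}X\otimes L\dual$ whose sections are the infinitesimal projectivities fixing a hyperplane section, and these are nowhere tangent to $X$ (equivalently, $X$ non-degenerate forces $\H^0(T_X\otimes L\dual)=0$ — this is where one uses that $X$ is cut out by quadrics-or-worse, i.e.\ not a linear space, and is the argument already implicit in the curve case). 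With $\H^0(\restr{T_{\P^r}}X\otimes L\dual)\cong\H^0(L)\dual$ from the first step and $\H^1(\restr{T_{\P^r}}X\otimes L\dual)=0$, the long exact sequence collapses to exactly \eqref{w-exctsq2}.

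Third, for the K3 case, the surjectivity of $\mu_{L,\omega_X}=\mu_{L,\O_X}$ is classical (it is the statement that the coordinate ring of a projectively normal K3 surface — which $(S,L)$ is, by Saint-Donat — is generated in degree one, or more directly follows from $\H^1(L\dual)=0$ and projective normality). For the last isomorphism, I continue the same long exact sequence one step further: the twisted tangent-bundle sequence gives $\H^1(N_{X/\P^r}\otimes L\dual)$ squeezed between $\H^1(\restr{T_{\P^r}}X\otimes L\dual)=0$ and $\H^2(T_X\otimes L\dual)$, and the twisted Euler sequence on a K3 gives $\H^2(\restr{T_{\P^r}}X\otimes L\dual)\hookrightarrow \H^2(\O_X)\otimes\H^0(L)\dual$; one checks via Serre duality on the K3 (where $\omega_X\cong\O_X$, so $\H^2(T_X\otimes L\dual)\cong\H^0(\Omega^1_X\otimes L)\dual$) that the relevant connecting map is zero, yielding $\H^1(N_{X/\P^r}\otimes L\dual)\cong\H^2(T_X\otimes L\dual)$.

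The main obstacle I anticipate is not the dévissage itself — which is formally identical to the curve case — but rather the surjectivity hypothesis on $\mu_{L,\omega_X}$ for surfaces and its automatic verification for K3 surfaces: one must cite the right projective-normality / $H^1$-vanishing input (Saint-Donat for K3, and the general surface statement being genuinely an \emph{assumption} rather than a theorem), and be careful that all the "obvious" vanishings ($\H^0(T_X\otimes L\dual)=0$, $\H^1(\O_X)=0$ feeding into $\H^1(T_{\P^r}\otimes L\dual)$) really do hold under the stated hypotheses and in particular use the exclusion of $(\P^n,\O(1))$.
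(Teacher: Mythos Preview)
Your overall strategy---Euler sequence twisted by $L\dual$ to control $\H^i(\restr{T_{\P^r}}X\otimes L\dual)$, then the twisted normal sequence---is exactly the paper's. But several steps are mis-executed.

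\emph{The Euler sequence.} What you wrote, $0\to\O_X\to\H^0(L)\otimes L\dual\to T_{\P^r}\otimes L\dual\to 0$, is not the twisted Euler sequence; the correct form is $0\to L\dual\to \H^0(L)\dual\otimes\O_X\to \restr{T_{\P^r}}X\otimes L\dual\to 0$. More importantly, your claim of an injection $\H^1(\restr{T_{\P^r}}X\otimes L\dual)\hookrightarrow \H^1(\O_X)\otimes\H^0(L)$ has the arrow backwards: the long exact sequence gives $\H^0(L)\dual\otimes\H^1(\O_X)\to \H^1(\restr{T_{\P^r}}X\otimes L\dual)\to\H^2(L\dual)$, so $h^1(\O_X)=0$ alone is not enough. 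For $n\geq 3$ one needs $\H^2(L\dual)=0$, which is Kodaira vanishing---this is what the paper invokes and you omit. For $n=2$ you correctly identify that $\mu_{L,\omega_X}$ enters (its cokernel is Serre-dual to that $\H^1$). The same reversal of arrows afflicts your $\H^2$ argument in the K3 case: $\H^2(\restr{T_{\P^r}}X\otimes L\dual)$ is a \emph{quotient} of $\H^0(L)\dual\otimes\H^2(\O_X)$, not a subspace; its vanishing follows from surjectivity (indeed, bijectivity) of the preceding map $\H^2(L\dual)\to\H^0(L)\dual\otimes\H^2(\O_X)$, which is dual to the injectivity of $\mu_{L,\omega_X}$.

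\emph{The vanishing $\H^0(T_X\otimes L\dual)=0$.} Your justification (``infinitesimal projectivities fixing a hyperplane are nowhere tangent to $X$ because $X$ is not a linear space'') is not a proof. This vanishing is precisely the Mori--Sumihiro--Wahl cohomological characterization of $\P^n$: for smooth projective $X$ and ample $L$, $\H^0(T_X\otimes L\dual)\neq 0$ forces $(X,L)\cong(\P^n,\O(1))$. The paper cites this explicitly; you should too, as it is the only place the hypothesis $(X,L)\neq(\P^n,\O(1))$ is used.

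\emph{The K3 multiplication map.} You over-engineer this: since $\omega_X\cong\O_X$, the map $\mu_{L,\omega_X}\colon \H^0(L)\otimes\H^0(\O_X)\to\H^0(L)$ is trivially an isomorphism. No projective normality or Saint-Donat is needed here.
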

\vspace{\negvcorr}
\vspace{\negvcorr}

\begin{proof}
The Euler exact sequence twisted by $L^\vee$,
% is
% \begin{equation}\label{eq:euler1}
% 0\to L^\vee \to H^ 0(L)^ \vee\otimes \O_X\to  \left.T_{\P^ r} \right|_X\otimes L^ \vee\to 0. 
% \end{equation}
% By the Kodaira vanishing theorem, one has $h^i(L^ \vee)=0$ for
% $i<n$. This implies
together with the the Kodaira Vanishing Theorem imply that
\begin{equation}
\label{eq:h0}
\H^ 0(\left.T_{\P^r} \right|_X\otimes L^ \vee)\cong \H^0 (L) ^\vee.
\end{equation}
Moreover 
\begin{equation}
\label{eq:h1}
\H^ 1(\left.T_{\P^r} \right|_X\otimes L^ \vee)=0:
\end{equation}
this follows from the vanishing of $\H^ 1(\O_X)$ and, 
if $n\geq 3$ the vanishing of $\H^2 (L\dual)$, 
or if $n=2$ the surjectivity of $\mu_{L,\omega_X}$.

Next we consider the twisted normal bundle exact sequence 
\begin{equation}\label{eq:normal-1}
0\to T_X\otimes L^ \vee 
\to \restr {T_{\P^ r}} X \otimes L^ \vee 
\to N_{X/\P^ r}\otimes L^ \vee\to 0.
\end{equation}
By the Mori--Sumihiro--Wahl Theorem \cite {MS78, wahl83}, one has 
$\H^0(T_X\otimes L^ \vee)=0$. Then \eqref {w-exctsq2} follows from 
\eqref{eq:h0}, \eqref {eq:h1}, and the long exact sequence of cohomology
of \eqref {eq:normal-1}.

If $\omega_X$ is trivial then
$\mu_{L,\omega_X}$ is an isomorphism hence, if $n=2$,
$\H^2(\left.T_{\P^r} \right|_X\otimes L^ \vee)$ vanishes, and the 
final assertion follows from the cohomology sequence of \eqref
{eq:normal-1}.
\end{proof}

% \bigskip
% The following lemma is not new , but we recall its proof for
% completeness. 

\begin{lemma}[(see, \eg{} {\cite[\S~2]{wahl87}} and
{\cite[Lemma~2.7 (ii)]{knutsen-lopez}})]
\label{l:fighissimo} Let $X\subset \P^ n$ be a local
complete intersection variety such that the homogeneous ideal of $X$
is generated by quadrics and the first syzygy module is generated by
linear syzygies. Then $\H^ 0(N_X(-k))=0$ for all $k\geq 2$.
\end{lemma}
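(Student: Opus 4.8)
The plan is to reduce the vanishing of $\H^0(N_X(-k))$ for $k\geq 2$ to the vanishing of an explicit graded piece of a module built from the minimal free resolution of the homogeneous ideal $I_X\subset R:=\C[x_0,\dots,x_n]$. First I would use the conormal sequence: since $X$ is a local complete intersection, the conormal bundle $N_X\dual$ sits in the exact sequence $0\to N_X\dual \to \Omega^1_{\P^n}|_X \to \Omega^1_X\to 0$, and dualizing (this is legitimate for a l.c.i.\ since $N_X$ is locally free) gives $0\to T_X\to T_{\P^n}|_X\to N_X\to 0$. Twisting by $\O_X(-k)$ and taking cohomology, $\H^0(N_X(-k))$ is controlled by $\H^0(T_{\P^n}|_X(-k))$ and $\H^1(T_X(-k))$; for $k\geq 2$, the Euler sequence and the vanishing of $\H^i(\O_X(-k))$, $\H^i(\O_X(1-k))$ in the relevant degrees (which follow from the fact that $X$ is arithmetically Cohen--Macaulay under the syzygy hypotheses, or more cheaply from a direct argument on $\P^n$) reduce the problem to a statement purely about the normal bundle and the ideal sheaf.

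The cleaner route, which I would actually carry out, is the standard identification $\H^0(N_X(-k)) = \Hom_{\O_X}(\widetilde{I_X/I_X^2},\O_X(-k))$ together with the surjection $\widetilde{(I_X)}\otimes\O_X \twoheadrightarrow I_X/I_X^2$, which embeds $\H^0(N_X(-k))$ into $\Hom\big(\widetilde{(I_X)},\O_X(-k)\big)$; sheafifying the beginning of the minimal free resolution of $I_X$, namely $R(-3)^{b_2}\xrightarrow{\ \partial\ } R(-2)^{b_1}\to I_X\to 0$ (here the hypotheses say $I_X$ is generated in degree $2$ and the first syzygies are linear, i.e.\ $\partial$ has linear entries), one gets $\H^0(N_X(-k))$ sitting inside the kernel of $\trsp\partial:\H^0(\O_X(k-2))^{b_1}\to \H^0(\O_X(k-3))^{b_2}$. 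Now I would run the key degree count: a section of $N_X(-k)$ is a collection of forms $f_1,\dots,f_{b_1}$ with $\deg f_j = k-2$, subject to the linear syzygy relations, and one shows directly that for $k\geq 2$ any such collection that is "compatible with all linear syzygies" must be proportional to the tautological section coming from the Euler vector field — but that section lies in the image of $T_{\P^n}|_X$, hence dies in $N_X$. Equivalently, the point is that the linear-syzygy hypothesis forces the map $\H^0(\O_X(k-2))^{b_1}\to\H^0(\O_X(k-3))^{b_2}$ to be injective on the subspace of would-be normal sections once $k\geq 2$.

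To make the last paragraph precise I would use the Koszul-type exactness built into "quadrics + linear syzygies": writing $F_2\subset F_1\otimes R(-2)$ for the linear syzygies, the hypothesis is exactly that the complex $F_1\otimes\O_X(-1)\to (I_X)_2\otimes\O_X \to \O_X(1)$ — or rather its linear-strand version — is exact, which is the condition that $X$ satisfies property $N_{1,2}$. Dualizing and twisting, exactness of the linear strand in the resolution yields that $\Hom(\widetilde{(I_X)},\O_X(-k))$ has no "new" classes beyond those pulled back from $\P^n$ for every $k\geq 2$, i.e.\ $\H^0(N_X(-k))$ injects into $\H^0(N_{\P^n}(\cdots))=0$; concretely $\H^0(N_{\P^n/\P^n}(-k))=0$ trivially, and the comparison map $N_X\to N_{\P^n}|_X$ being zero is not the issue — the issue is that $\H^0(N_X(-k))$ maps injectively into $\coker$ of the sheafified differential, which the exactness kills.

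The main obstacle, and where the hypotheses are genuinely used, is the passage from "$\partial$ has linear entries / first syzygies are linear" to the injectivity statement in the degree $k\geq 2$; this is precisely Wahl's argument in \cite[\S~2]{wahl87} and the Knutsen--López formulation, and I would simply invoke the module-theoretic fact that for an ideal generated by quadrics with linear first syzygies the associated "Wahl module" $W_k:=\coker\big(\H^0(\O_X(k-2))^{b_1}\to\H^0(\O_X(k-3))^{b_2}\big)$ vanishes in the relevant degree, from which $\H^0(N_X(-k))=0$ drops out of the long exact sequence. The remaining steps — checking the cohomology vanishings $\H^i(\O_X(j))=0$ needed to split off the Euler/conormal sequences, and verifying $N_X$ is locally free so that the $\Hom$-sheaf identification is valid — are routine given that $X$ is l.c.i.
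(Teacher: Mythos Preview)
The paper does not prove this lemma --- it only cites \cite{wahl87} and \cite{knutsen-lopez}. Your setup is essentially the right one: dualizing the presentation $\O_{\P^n}(-3)^{b_2} \xrightarrow{\partial} \O_{\P^n}(-2)^{b_1} \to \mathcal I_X \to 0$ and restricting to $X$ yields the exact sequence $0 \to N_X \to \O_X(2)^{b_1} \to \O_X(3)^{b_2}$, hence after twisting
\[
0 \longrightarrow \H^0(N_X(-k)) \longrightarrow \H^0(\O_X(2-k))^{b_1} \longrightarrow \H^0(\O_X(3-k))^{b_2}.
\]
Note that your twists are reversed (you wrote $k-2$ and $k-3$). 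With the correct signs the case $k \geq 3$ is immediate, since $\H^0(\O_X(2-k))=0$ for $X$ integral and $2-k<0$; no Euler-vector-field argument is needed, and indeed the Euler section does not live in degree $\leq -2$.

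The real gap is at $k=2$, the only nontrivial case. Here one must show that the map $\C^{b_1} \to \H^0(\O_X(1))^{b_2}$, $(c_i) \mapsto \bigl(\sum_i c_i \ell_{ij}\bigr)_j$, is injective, and your proposal never does this: the Euler comment is misplaced, the sentence about ``property $N_{1,2}$'' and injecting into $N_{\P^n/\P^n}$ does not parse, and the final invocation of a ``module-theoretic fact'' is just a restatement of what is to be proved. The actual argument is short. Since $X$ is nondegenerate, $(c_i)$ lies in the kernel iff $\sum_i c_i \ell_{ij}=0$ in $R_1$ for every linear syzygy $(\ell_{1j},\ldots,\ell_{b_1,j})$; equivalently the $R$-linear map $\Psi_c:\mathrm{Syz}(f_1,\ldots,f_{b_1})\to R$, $\sigma\mapsto\sum_i c_i\sigma_i$, vanishes on the degree-$3$ generators, and hence --- this is exactly where the linear-syzygy hypothesis is used --- on all of $\mathrm{Syz}$. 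Applying $\Psi_c$ to the Koszul syzygy $f_je_i-f_ie_j$ gives $c_if_j=c_jf_i$ for all $i,j$, so all the $f_i$ are proportional, impossible for a minimal generating set with $b_1\geq 2$. (When $b_1=1$, i.e.\ $X$ is a single quadric, the conclusion actually fails: $\H^0(N_X(-2))\cong\C$; this degenerate case is implicitly excluded in the paper's applications.)
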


\bigskip
Note that this applies to any canonical curve $C$ with
$\Cliff(C) >2$ by
\cite{voisin88,schreyer91},
resp.\ to any $K3$ surface $S \subset \P^g$ with
$\Cliff(S,\O_S(1))>2$ \cite{saint-donat}.
In the latter case, Andreas Knutsen
kindly indicated to us how to prove that
$\H^0(N_S(-2))=0$
if $g \geq 11$ without any assumption on the Clifford index.
% the following stronger result:
% let  be any smooth (non-hyperelliptic) $K3$ surface of
% genus $g$. If $g \geq 11$, then $\H^0(N_S(-2))=0$; moreover, for $g
% \leq 10$ there is a complete classification of those $S$ for which 
% $\H^0(N_S(-2)) \neq 0$. 
We don't dwell on this here.

% \begin{proof} The   ideal sheaf of $X$ has a minimal presentation 
% \begin{equation*}\label{eq:pres0}
%  \O_{\P^ {n}}(-3)^ {\oplus
%   m_1} \stackrel {\mathbf r} \longrightarrow   \O_{\P^ {n}}(-2)^ {\oplus m}\longrightarrow \mathcal I_{X/\P^ {n}}\longrightarrow 0
% \end{equation*}
% where ${\mathbf r}$ is an  $m\times m_1$ matrix of linear forms. 
% By dualising and tensoring with $\O(-k)$ we have the exact sequence 
% \begin{equation}\label{eq:fig}
% 0 \longrightarrow N_{X/\P^ n} (-k) \longrightarrow   \O_X(2-k)^
% {\oplus m}  \stackrel {\trsp {\mathbf r}} 
% \longrightarrow \O_X(3-k)^ {\oplus m_1}.
% \end{equation}
% The assertion is straightforward for $k\geq 3$.  The rows of $\mathbf
% r$ are linearly independent over $\C$, since otherwise there would be
% some quadric in the ideal of $X$ not involved in any linear syzygy
% but involved in the quadratic Koszul syzygies,
% contradicting the hypothesis.  This gives the case $k=2$.
% \end{proof}

\section{Ribbons and extensions}
\label{S:ribbons}

In this Section we recall the required background on ribbons, and
their relation with Wahl maps in the case of canonical curves. We
review \cite[Proof of Thm~7.1]{wahl97} in some details, as we will
need this later. We make our observation that
unicity holds in Theorem~\ref{t:w+abs} (Remark~\ref{r:unicita}).

% \subsection{Generalities on ribbons}
% \label{s:ribb}

\paragraph{}
Let $Y$ be a reduced connected scheme and $L$ an invertible sheaf on
$Y$. A \emph{ribbon} over $Y$ with \emph{normal bundle} $L$ (or
\emph{conormal bundle} $L^\vee$) is a scheme $\tilde Y$ such that
$\tilde Y_{\mathrm{red}}=Y$, $\mathcal I_{Y/\tilde Y}^2=0$ and
$L^\vee\cong   \mathcal I_{Y/\tilde Y} 
= \ker\bigl( \O_{\tilde Y} \to \O_Y \bigr)$.
To each ribbon one associates the extension class $e_{\tilde Y}\in
\Ext^1_Y(\Omega^1_Y,L^\vee)$ determined by the conormal sequence of
$Y\subset \tilde Y$:
\[
\xymatrix@=10pt{
& 0 \ar[r] 
& L^\vee\ar[r]\ar@{=}[d]
& \O_{\tilde Y} \ar[r]\ar[d]
& \O_Y \ar[r]\ar[d]
& 0\\
e_{\tilde Y}:
& 0 \ar[r]
& L^\vee\ar[r]
& \Omega^1_{\tilde Y} \bigr| _Y \ar[r]
& \Omega^1_Y\ar[r] 
& 0
}
\]
Note that the upper row is an extension of sheaves of algebras, while
the lower one is an extension of $\O_Y$-modules; the middle and
right vertical arrows are differentials and therefore are not
$\O_{\tilde Y}$-linear.  Conversely, to
each element of $\Ext^1_Y(\Omega^1_Y,L^\vee)$ there is associated a
unique ribbon constructed in a standard way (see, e.g.,
\cite[Thm.~1.1.10]{ilsernesi}).

\subparagraph{}\label{sp:proportional}\medskip
 The trivial extension corresponds to the \emph{split ribbon}, the
 unique one such that the inclusion $Y \subset \tilde Y$ admits a
 retraction   $\tilde Y \to Y$.  Two extensions define isomorphic ribbons
 if and only if they are proportional.  Therefore the set 
of isomorphism classes of non-split
 ribbons is in $1:1$ correspondence with $\P \bigl(
 \Ext^1_Y(\Omega^1_Y,L^\vee) \bigr)$.

\paragraph{}
\label{ex:doublehyp}
Let $Y\subset X$ be a nonsingular hypersurface in a variety $X$ smooth along $Y$. % Consider the conormal sequence of $Y\subset X$:
% \[
% \kappa_{Y/X}:\xymatrix@=10pt{
% 0\ar[r]& N^\vee_{Y/X}\ar[r]& \Omega^1_{X} \bigr|_Y \ar[r]& \Omega^1_Y \ar[r]&0
% }
% \]
% Then
The conormal sequence of $Y\subset X$ yields an element
$\kappa_{Y/X} \in \Ext^1_Y(\Omega^1_Y,N^\vee_{Y/X})$, defining a
ribbon $\tilde Y$ over $Y$ with normal bundle $N_{Y/X}$. A priori we have
another ribbon $\bar{Y}$ over $Y$, defined by $\O_{\bar{Y}}
= \O_X/\mathcal I_Y^2 = \O_X/\O_X(-2Y)$;  by definition, one has
$\bar{Y}\subset X$. On the other hand it follows from
the conormal sequence of $\bar{Y}\subset X$ that
$\Omega^1_{X} \bigr|_{Y} = 
\Omega^1_{\bar{Y}} \bigr|_Y$.  Therefore $\tilde
Y=\bar{Y}$.  We call $\tilde Y$ a \emph{double hypersurface} in $X$
and we denote it by $2Y_X$.

\subparagraph{}\medskip
If $H \subset \P^{r+1}$ is a hyperplane, then $2H_{\P^{r+1}}$ is a
split ribbon. This can be seen in two ways. Firstly, projecting from a
point $p\notin H$ we obtain a retraction $2H_{\P^{r+1}} \to
H$. Alternatively, the extension defining $2H_{\P^{r+1}}$ belongs to
$\Ext^1_H(\omega_H,\O_H(-1)) = \H^1(H,\O_H(r))=0$,
and therefore splits.

% \subsection{Gaussian maps and ribbons}
% \label{s:gauss-ribbs}

\paragraph{}
\label{p:setup-lemmaribb}
Consider a smooth variety $X \subset \P^n$, and identify this
$\P^n$ with a hyperplane $H \subset \P^{n+1}$. 
Let $L= \O_X(1)$.
% We have the conormal exact sequence of $X \subset H$
% \[
% 0\to N^\vee_{X/H} \to \restr {\Omega^1_{H}} X
% \xrightarrow { r} \Omega^1_X\to 0,
% \]
% and 
The restriction map 
$r:\restr {\Omega^1_{H}} X \to \Omega^1_X$ induces a map
\[
\eta: \Ext^1_X(\Omega^1_X, L\dual)
\to 
\Ext^1_X(\restr {\Omega^1_H} X, L\dual).
\]
The following result characterizes in terms of this map $\eta$ those
abstract ribbons $\tilde X$ over $X$ with normal bundle $L$, which can
be embedded in the embedded ribbon $2H_{\P^{n+1}} \subset \P^{n+1}$
in a way compatible with the embedding $X \subset \P^n=H$.

\begin{lemma}[(see {\cite[\S~0]{voisin-acta}})]
\label{l:ribb-obstr}
In the situation of \ref{p:setup-lemmaribb}, consider
an element $e\in \Ext^1_X(\Omega^1_X, L\dual)$, and let $\tilde X$ be
the ribbon over $X$ defined by $e$.
There exists an inclusion $\tilde X \subset 2H_{\P^{n+1}}$ such that
$X = \tilde X \cap H$
if and only if $\eta (e)=0$.
\end{lemma}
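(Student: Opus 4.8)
The plan is to unwind the definitions on both sides and match up two extension classes in $\Ext^1_X(\Omega^1_X,L\dual)$ and $\Ext^1_X(\restr{\Omega^1_H}X, L\dual)$. First I would set up the embedded ribbon $2H_{\P^{n+1}}$ as in \ref{ex:doublehyp}: it is the double hypersurface on $H$ inside $\P^{n+1}$, with conormal bundle $\O_H(-1)$, and it carries the canonical extension class $\kappa_{H/\P^{n+1}} \in \Ext^1_H(\Omega^1_H, \O_H(-1))$ coming from the conormal sequence of $H \subset \P^{n+1}$; restricting this to $X$ (i.e.\ pulling back along $X \hookrightarrow H$, or equivalently tensoring and applying $\otimes \O_X$ suitably) yields a class $\kappa_{H/\P^{n+1}}|_X \in \Ext^1_X(\restr{\Omega^1_H}X, L\dual)$, where we use $N^\vee_{H/\P^{n+1}}|_X = \O_X(-1) = L\dual$. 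The key observation, which I would state as a sub-lemma, is that a ribbon $\tilde X$ over $X$ with normal bundle $L$ admits an inclusion $\tilde X \subset 2H_{\P^{n+1}}$ restricting to $X = \tilde X \cap H$ if and only if the natural map $\O_{2H_{\P^{n+1}}} \to \O_X$ factors through $\O_{\tilde X}$ compatibly, and this happens exactly when the conormal sequence of $X \subset \tilde X$ is obtained from that of $X \subset 2H_{\P^{n+1}}$ by pushout along $r\dual$ (dually, when the extension class of $\tilde X$ pulls back correctly).

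Concretely, here is the chain I would follow. The ribbon $2H_{\P^{n+1}}$ restricted to a neighbourhood of $X$ gives, via the conormal sequence of $X \subset 2H_{\P^{n+1}}$, an extension
\[
0 \to L\dual \to \restr{\Omega^1_{2H_{\P^{n+1}}}}{X} \to \restr{\Omega^1_H}{X} \to 0,
\]
whose class is precisely $\kappa_{H/\P^{n+1}}|_X$ — here I use that $\Omega^1_{2H_{\P^{n+1}}}|_X = \Omega^1_H|_X$-extension, as computed in \ref{ex:doublehyp}. Now an inclusion $\tilde X \subset 2H_{\P^{n+1}}$ with $X = \tilde X \cap H$ amounts to a surjection $\O_{2H_{\P^{n+1}}} \twoheadrightarrow \O_{\tilde X}$ of $\O_{\P^{n+1}}$-algebras inducing the identity on $\O_X$; on conormal sequences this forces a commutative ladder showing that the class $e$ of $\tilde X$ equals the image of $\kappa_{H/\P^{n+1}}|_X$ under the connecting/pushout operation associated to $r\dual: \restr{\Omega^1_H}X \to \Omega^1_X$ — but that pushout operation is exactly the map $\eta$ applied the other way. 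The cleanest formulation: the ribbon $\tilde X$ embeds as required if and only if the pullback of $e$ along $r$ in $\Ext^1_X(\restr{\Omega^1_H}X, L\dual)$, namely $\eta(e)$, coincides with $\kappa_{H/\P^{n+1}}|_X$; and then one checks separately, using the computation in the second subparagraph of \ref{ex:doublehyp} that $2H_{\P^{n+1}}$ is split over $H$ (since $\Ext^1_H(\omega_H,\O_H(-1)) = H^1(H,\O_H(r)) = 0$ when $H = \P^n$), that $\kappa_{H/\P^{n+1}}|_X = 0$ — this is where the hypothesis that we embed into the \emph{split} ribbon $2H_{\P^{n+1}}$ enters. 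Combining, the condition becomes simply $\eta(e) = 0$.

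The remaining work is to verify that the correspondence "inclusion $\tilde X \subset 2H_{\P^{n+1}}$ with $X = \tilde X\cap H$" $\leftrightarrow$ "$\eta(e) = 0$" is genuinely a bijection of the appropriate sets, or at least that existence on one side is equivalent to vanishing on the other; I would phrase the proof so that from $\eta(e) = 0$ one explicitly builds the surjection $\O_{2H_{\P^{n+1}}} \to \O_{\tilde X}$ by choosing a splitting of $\eta(e)$ and transporting the algebra structure, and conversely from an inclusion one reads off the splitting. I expect the main obstacle to be bookkeeping with the non-$\O$-linearity of the differentials in the conormal sequences (the middle vertical arrows in the ribbon diagram of \S~\ref{S:ribbons} are differentials, not module maps), so one must be careful that the pushout/pullback operations are taken in the category of $\O_X$-modules after the conormal sequences have been formed, not before; once that is handled, the identification of $\eta(e)$ with the obstruction is formal. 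I would cite \cite[\S~0]{voisin-acta} for the original form of this computation and present the argument as a streamlined account of it, adapted to the exact-sequence notation of \ref{p:setup-lemmaribb}.
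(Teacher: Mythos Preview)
Your proposal is correct and follows essentially the same approach as the paper. The paper presents the argument more tersely via a three-row diagram whose middle row is $\eta(e)$ and whose top row is $\restr{e_{2H_{\P^{n+1}}}}{X}$ (your $\kappa_{H/\P^{n+1}}|_X$), then observes that the embedding exists iff the dashed arrow exists, iff $\eta(e) = \restr{e_{2H_{\P^{n+1}}}}{X} = 0$; your more discursive treatment unpacks exactly this, with the same key input that $2H_{\P^{n+1}}$ is split.
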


\begin{proof}
Consider the following diagram:
\[
\xymatrix@=10pt{
\restr {e_{2H_{\P^{n+1}}}} X :&
0\ar[r] & L^\vee \ar@{=}[d] \ar[r] 
& \restr {\Omega^1_{\P^{n+1}}} X \ar@{-->}[d] \ar[r]
& \restr {\Omega^1_{H}} X \ar[r] \ar@{=}[d]
& 0 \\
\eta(e):&
0 \ar[r] & L^\vee \ar@{=}[d] \ar[r] & \mathcal E \ar[d]\ar[r]
& \restr {\Omega^1_{H}} X \ar[r] \ar[d]^-{r} 
& 0 \\
e: & 0\ar[r]& L^\vee\ar[r] 
& \restr {\Omega^1_{\tilde X}} X  \ar[r]& \Omega^1_X \ar[r]&0
}\]
There exists an inclusion $\tilde X \subset 2H_{\P^{n+1}}$ such that
$X = \tilde X \cap H$
if and only if there exists a dashed arrow such that the diagram
commutes,
and the latter condition is equivalent to
$\eta(e) \cong \restr {e_{2H _{\P^{n+1}}}} X$.
The result follows by the fact that the ribbon $2H_{\P^{n+1}}$ is
split, whence $e_{2H _{\P^{n+1}}}=0$.
\end{proof}

\bigskip
In particular, Lemma~\ref{l:ribb-obstr} tells us that if $\eta$ is
injective, then every ribbon $\tilde X \subset 2H_{\P^{n+1}}$ such
that $X = \tilde X \cap H$ is split.

\begin{lemma}
\label{l:id-eta}
When $X=C$ is a canonical curve (resp.\ $X=S$ is a $K3$ surface),
the restriction to $\bigwedge^2 \H^0(\omega_C)$ of the map
$\eta$ is $\trsp \Phi_C$ (resp.\ 
the map $\eta$ is $0$).
\end{lemma}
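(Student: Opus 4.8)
The plan is to identify the map $\eta\colon \Ext^1_X(\Omega^1_X,L\dual)\to \Ext^1_X(\restr{\Omega^1_H}X,L\dual)$ with the connecting-type map induced by the restriction $r\colon \restr{\Omega^1_H}X\to\Omega^1_X$, and then read off what it is in each of the two cases by dualizing into tangent-bundle cohomology, exactly as was done in \ref{p:identifications}. First I would apply $\Ext^\bullet_X(-,L\dual)$ to the conormal sequence $0\to N_{X/H}\dual\to \restr{\Omega^1_H}X\to\Omega^1_X\to 0$ (here $N_{X/H}\dual$ is a bundle since $X$ is smooth) and to the Euler/normal identifications already set up: by Serre duality on the curve (resp.\ surface) one has $\Ext^1_X(\Omega^1_X,L\dual)\cong \H^1(T_X\otimes L\dual)$ and $\Ext^1_X(\restr{\Omega^1_H}X,L\dual)\cong \H^1(\restr{T_H}X\otimes L\dual)$, and under these isomorphisms $\eta$ becomes precisely the map $\H^1(T_X\otimes L\dual)\to \H^1(\restr{T_H}X\otimes L\dual)$ induced by the inclusion $T_X\subset\restr{T_H}X$.

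For the curve case $X=C\subset H=\P^{g-1}$ with $L=\omega_C$, that induced map $\H^1(T_C\otimes\omega_C\dual)\to\H^1(\restr{T_{\P^{g-1}}}C\otimes\omega_C\dual)$ is exactly the map denoted $\trsp\Phi_{L,\omega_C}=\trsp\Phi_C$ in \ref{p:identifications} (see the displayed equation there identifying $\trsp\Phi_{L,\omega_C}$ with the map induced by $T_C\subset\restr{T_{\P^r}}C$). So $\eta=\trsp\Phi_C$, which is the first claim. For the $K3$ case $X=S\subset H=\P^{g}$ with $L=\O_S(1)$, I would instead invoke Lemma~\ref{l:wahl1}: its proof shows $\H^1(\restr{T_{\P^r}}S\otimes L\dual)=0$ (equation~\eqref{eq:h1}), using $\H^1(\O_S)=0$ and the surjectivity of $\mu_{L,\omega_S}$ (automatic for a $K3$). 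Hence the target of $\eta$ vanishes and $\eta=0$.

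The one point needing a little care is the naturality/compatibility that makes ``$\eta$ after the $\Ext\cong\H^1(T\otimes L\dual)$ identification equals the map induced by $T_X\hookrightarrow\restr{T_H}X$'' literally true: the identifications in \ref{p:identifications} come from the twisted Euler sequence and Serre duality, and one must check that the restriction map $r\colon\restr{\Omega^1_H}X\to\Omega^1_X$ is dual (in the appropriate derived sense) to the inclusion $T_X\subset\restr{T_H}X$, i.e.\ that the relevant square of connecting homomorphisms commutes. This is the functoriality of Serre duality together with the functoriality of the long exact $\Ext$-sequence, and I expect it to be the main (though still routine) obstacle; everything else is bookkeeping already assembled in \ref{p:identifications} and in the proof of Lemma~\ref{l:wahl1}. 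I would phrase the argument so that the curve statement is essentially a pointer to \ref{p:identifications} and the $K3$ statement a pointer to \eqref{eq:h1}, keeping the write-up short.
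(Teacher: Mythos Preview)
Your proposal is correct and follows essentially the same approach as the paper: for the curve case you point to \ref{p:identifications} (where $\trsp\Phi_C$ is already identified with the map $\H^1(T_C\otimes L\dual)\to\H^1(\restr{T_{\P^{g-1}}}C\otimes L\dual)$ induced by $T_C\subset\restr{T_{\P^{g-1}}}C$), and for the $K3$ case you invoke the vanishing \eqref{eq:h1} of the target, which is exactly what the paper does. One small quibble: the identification $\Ext^1_X(\Omega^1_X,L\dual)\cong\H^1(T_X\otimes L\dual)$ is not Serre duality but simply local freeness of $\Omega^1_X$ (and likewise of $\restr{\Omega^1_H}X$), which also makes the compatibility you worry about in the last paragraph immediate.
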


\begin{proof}
In the case of a canonical curve, the stated identification 
% of the map $\eta$ with $\trsp \Phi_C$ 
is merely the definition of the Wahl map $\Phi_C$,
see %subsection~\ref{s:gaussian} and 
\ref{p:identifications}.
In the case of a $K3$ surface, the target of the map $\eta$ is 
$\H^1( \restr {T_{\P^g}} S (-1))$, which vanishes by \eqref{eq:h1}.
\end{proof}

\paragraph{}
\label{p:unobstr-ribbons}
Let $C \subset \P^{g-1}$ be a canonical curve.
The upshot of the previous paragraphs is that
the  space $\P (\ker (\trsp\Phi_C))$ canonically
identifies with the space of isomorphism classes of ribbons $\tilde C$
over $C$ for which there may be a surface $S \subset \P^g$ not a cone,
such that $\tilde C = 2C_S$.

Similarly, for a $K3$ surface $S \subset \P^g$,
the space $\P(\H^1(T_S(-1)))$ parametrizes isomorphism
classes of ribbons $\tilde S$ over $S$ that may come from a threefold
$V \subset \P^{g+1}$ not a cone, having $S$ as a hyperplane section.
The vanishing of $\eta$ for $K3$ surfaces tells us that,
quite surprisingly, if a ribbon over $S$ has the appropriate normal
bundle $\O_S(1)$, there is no obstruction to embed it as an
infinitesimal threefold in $\P^{g+1}$ with hyperplane section $S$.

% \subsection{Wahl's extension construction}
% \label{s:wahl-ext}

% Here we recall some detail of Wahl's extension construction 
% \cite[Proof of Thm~7.1]{wahl97};
% it follows Stevens' approach, see, \eg \cite{stevens}.
% These are needed both for our
% construction of the universal extension in
% subsection~\ref{s:extension}, and to justify our observation
% (in the next subsection~\ref{s:unicita}) that
% under appropriate assumptions the extension corresponding to a given
% ribbon is unique (Remark~\ref{r:unicita}).

\paragraph{}
\label{p:wahl-ext-cstr}
Let us now recall some detail of Wahl's extension construction 
\cite[Proof of Thm~7.1]{wahl97};
it follows Stevens' approach, see, \eg \cite{stevens}.
Let $C\subset \P^ {g-1}$ be a canonical curve with $\Cliff(C) >2$,
hence of genus $g\geqslant 7$. Let $\mathbf x=(x_0:\ldots:x_{g-1})$ be
homogeneous coordinates in $\P^ {g-1}$ and let $\mathbf f(\mathbf
x)=\mathbf 0$ be the homogeneous quadratic equations of $C$ in the
form of a vector of length $m$.
Since $\Cliff(C) >2$, we know by \cite{voisin88,schreyer91} that the
homogeneous ideal of $C$ has a minimal presentation
\begin{equation}
\label{eq:pres} 
\O_{\P^ {g-1}}(-3)^ {\oplus m_1}
\stackrel {\mathbf r} \longrightarrow   
\O_{\P^ {g-1}}(-2)^ {\oplus m}
\stackrel {\mathbf f}\longrightarrow 
\mathcal I_{C/\P^ {g-1}}\longrightarrow 0.
\end{equation}

Assume now that $g\geq 11$. By \cite[Theorem~3]{abs1} one has $\H^
1(\P^{g-1},\mathcal I^ 2_{C/\P^ {g-1}}(k))=0$ for all $k\geq 3$, so
that 
\cite[Theorem 7.1]{wahl97} can be applied: Consider a non-zero $v\in
\coker(\Phi_C)^ \vee$, and let $C_v$ be the ribbon over $C$
corresponding to $v$; 
this ribbon $C_v$ lies in $\P^g$ by Lemmas~\ref{l:ribb-obstr} and
\ref{l:id-eta}; the construction in the proof of \cite [Theorem 7.1]
{wahl97} provides a surface $S_v$ in $\P^ g$ such that
$C_v = 2C_{S_v}$.

We shall now outline this construction.
Because of \eqref {w-exctsq},   $ \coker(\Phi_C)^ \vee$
is a quotient of $\H^ 0(C, N _{C/\P^{g-1}}(-1))$;
we choose a lift of $v$ with respect to this quotient.
The inclusion of
$\H^ 0(C, N _{C/\P^{g-1}}(-1))$ in $\H^ 0(C,\O_C(1))^ {\oplus m}$
coming from \eqref {eq:pres} represents this lift of $v$ as a length
$m$ vector $\mathbf f_v$ of linear forms on $\P^{g-1}$. The scheme
$C_v$ is defined by the equations
\begin{equation}
\label{eq:infdef}
\mathbf f(\mathbf x)+t\mathbf f_v(\mathbf x)=\mathbf 0,\quad  t^
2=0
\end{equation}
in the $g$-dimensional projective space with homogeneous
coordinates $(\bx:t)=(x_0:\ldots:x_{g-1}:t)$. Wahl proves that there is a
vector $\mathbf h_v$ of constants such that $S_v$ is defined by the
equations  
\begin{equation}\label{eq:eqsurf}
\mathbf f(\mathbf x)+t\mathbf f_v(\mathbf x)+ t^ 2 \mathbf h_v=\mathbf 0.
\end{equation}

% \subsection{Unicity of extension}
% \label{s:unicita}

% We now study the unicity of the surface extending a ribbon constructed
% in \ref{p:wahl-ext-cstr} above.
% The main result is Remark~\ref{r:unicita}; the assumptions there
% ensure that \cite[Theorem~3]{abs1} and \cite[Theorem~7.1]{wahl97}
% apply, as in \ref{p:wahl-ext-cstr}.

\begin{remark}
\label{r:unicita}
{\it
Let $C\subset \P^ {g-1}$ be a canonical curve of genus $g \geq 11$
and Clifford index $\Cliff(C) >2$.
Given a ribbon $v \in \ker (\trsp \Phi_C)$ over $C$, there is a
surface $S_v \subset \P^g$ extending it;
it is uniquely determined up
to the action of a group of projective transformations of 
$\P^g$ pointwise fixing $C$, whose tangent space identifies with 
$\H^0(\omega_C) \dual$ (see \ref{p:inf-autom}).
}%
\end{remark}

\bigskip
This is a mere consequence of \cite[Proof of Thm.~7.1]{wahl97}.
In a nutshell, the idea is that any $1$-extension of $C$ is given by
equations as in \eqref{eq:eqsurf}, where, by sequence \eqref
{w-exctsq}, $\mathbf f_v$ is determined by $v$ up to an element of
$\H^0(\omega_C)^\vee$, i.e.\ up to an infinitesimal automorphism 
as in \ref{p:inf-autom},
which does not change the isomorphism class of the ribbon
$C_v$. Then the extension $S_v$ depends
only on the choice of $\mathbf h_v$. 
Now any two such choices differ by an element of 
$\H^0(C,N_{C/\P^{g-1}}(-2))$ as we recall in \ref{p:unic-pf} below,
and this space is zero by Lemma~\ref{l:fighissimo} because $\Cliff (C)
>2$. 

\paragraph{}
\label{p:unic-pf}
To justify our affirmations above, let us briefly recall how the
vector of constants $\mathbf h_v$ may be chosen in
\cite[Proof of Thm.~7.1]{wahl97}.
Set
$S= \Sym ^\bullet \H^0(C,\omega_C)$, $I_C \subset S$ the homogeneous
ideal of $C$ in $\P^{g-1}$, and $S_C = S/I_C$.
In terms of graded $S$-modules, the presentation \eqref{eq:pres}
writes
\begin{equation}
\label{E:presentA}
S(-3) ^{\oplus m_1} 
\stackrel {\mathbf r} \longrightarrow
S(-2) ^{\oplus m} 
\stackrel {\mathbf f} \longrightarrow
S \longrightarrow S_C \longrightarrow 0.
\end{equation}
We need to recall the definition of $T^2_{S_C}$ from \cite[\S~3.1.2]{ilsernesi}. Denote by
\[
{R}_C:=\ker(\mathbf f)= \im(\mathbf r)
\subset S(-2) ^{\oplus m}
\]
the \emph{graded module of relations}.  It contains the graded
submodule ${R}_0$ of \emph{trivial} (or \emph{Koszul})
\emph{relations}.
An elementary
remark shows that ${R}_C/{R}_0$ is killed by $I_C$ and
therefore it is an $S_C$-module. Thus the presentation
\eqref{E:presentA} induces an exact sequence:
\[
{R}_C/{R}_0 \longrightarrow S_C(-2) ^{\oplus m} \longrightarrow
I_C/I_C^2 
\longrightarrow 0.
\]
The following exact sequence,
\begin{equation}\label{E:T2}
\xymatrix@=12pt{
0\to \mathrm{Hom}(I_C/I_C^2,S_C)\ar[r]
&\mathrm{Hom}(S_C^{\oplus m}(-2),S_C)\ar[r]^(.4){\tilde \ber} \ar@{=}[d]
&\mathrm{Hom}({R}_C/{R}_0,S_C)\to
T^2_{S_C} \to 0\\
&\mathrm{Hom}(S_C^{\oplus m},S_C(2))
}%endxymatrix
\end{equation}
defines $T^2_{S_C}$.

By flatness of the family of affine schemes over 
$\Spec (\C[t]/ (t^2))$ defined by \eqref{eq:infdef},
the relations $\ber$ lift, \ie there is
an $m \times m_1$ matrix of constants $\ber _v$ such that
\begin{equation*}
(\bef + t \bef_v)
(\mathbf r + t \mathbf r _v)
=0 \mod t^2,
\quad \text{i.e.} \quad
t(\bef_v \ber + \bef \ber _v)
=0.
\end{equation*}
Now the vector of constants $\bh_v$ is only subject to the condition
that the equations \eqref{eq:eqsurf} define a flat family of affine
schemes over $\Spec (\C[t]/ (t^3))$
(this, as in the proof of \cite[Proof of Thm.~7.1]{wahl97}, eventually
ensures flatness over $\Spec (\C[t])$), which in turn boils down to
\begin{equation}
\label{cond:flat}
t^2 (\bef_v \ber_v+ \bh_v \ber)=0.
\end{equation}
The map $\bef_v \ber_v: S ^{\oplus m_1} \to S(1) \to S_C(1)$ induces a
map belonging to $\Hom (R_C/R_0, S_C) _{-2}$, and 
condition~\eqref{cond:flat} is equivalent to 
\[
\bh_v \in 
\Hom (S^{\oplus m},S)_0 
= \Hom (S_C^{\oplus m},S_C)_0
= \Hom (S_C^{\oplus m},S_C(2))_{-2}
\]
being a lift of $-\bef_v \ber_v \in \Hom (R_C/R_0, S_C) _{-2}$ with
respect to the map $\tilde\ber$ in the exact sequence \eqref{E:T2};
two such lifts differ by an element of
\[
\Hom (I_C/I_C^2,S_C)_{-2} = \H^0(C,N_{C/\P^{g-1}}(-2)) = (0).
\]
\qed

\medskip
%At this point, it would be a shame not to mention the fact that 
The
\emph{existence} of a lift $\bh_v$, on the other hand, comes from the
identification 
$(T^2 _{S_C}) _{-2} \dual \cong \H^1 (\P^{g-1}, \mathcal I _{C/
  \P^{g-1}}(-3))$ 
\cite[Cor.~1.6]{wahl97} and the vanishing of the latter cohomology
group \cite[Thm.~3]{abs1}.

\bigskip
\subparagraph{Observation}
\label{obs:quadratic}
In the above proof, as both $\bef_v$ and $\ber_v$ depend linearly on
$v$, and $\bh_v$ is a lift of $-\bef_v\ber_v$, the vector of constants
$\bh_v$ depends quadratically on $v$.

\begin{remark}
\label{r:unic-BM}
It is not always true that the extension of a ribbon over a canonical
curve is unique.
Beauville and Mérindol \cite[Proposition~3 et
Remarque~4]{beauville-merindol} 
classify the curves $C$ for which there is a $K3$ surface extending
the trivial ribbon over the canonical model of $C$ (this indeed
contradicts the unicity, as in any event the cone over $C$ extends the
trivial ribbon). They show that such a curve is either the
normalization of a plane sextic, or a complete intersection of
bidegree $(2,4)$ in $\P^3$. In both cases one has
$\H^0(C,N_{C/\P^{g-1}}(-2)) \neq 0$; we leave this to the reader.
\end{remark}

% If $C$ is a smooth plane sextic, then its canonical model is a
% hyperplane section of a $K3$ surface complete intersection of the cone
% over the Veronese surface $v_3(\P^2) \subset \P^9$ in $\P^{10}$ and a
% quadric $Q \subset \P^{10}$; the trivial quadratic relations between
% $Q$ and the 
% quadrics defining the cone over $v_3(\P^2)$ give non-trivial elements
% of $\H^0(C,N_{C/\P^{g-1}}(-2))$.
% If $C$ is a $\delta$-nodal plane sextic, its canonical model is
% a hyperplane section of the cone over a Del Pezzo surface of degree
% $9-\delta$ and a quadric, and $\H^0(C,N_{C/\P^{g-1}}(-2)) \neq 0$ as
% before.
% We leave to the reader the analogous analysis when $C$ is a
% non-hyperelliptic plane sextic with arbitrary singularities; in any
% event, such a curve is an equigeneric degeneration of nodal plane
% sextics (see \cite{DS15} and the references therein),
% hence $\H^0(C,N_{C/\P^{g-1}}(-2)) \neq 0$.

% \par Similarly, if $C$ is a smooth complete intersection of
% bidegree $(2,4)$ in $\P^3$, then its canonical model is a hyperplane
% section of a $K3$ surface complete intersection of the cone over the
% Del Pezzo surface given by the anticanonical model of $\P^1 \times
% \P^1$ with a quadric $Q \subset \P^9$. Again, the trivial quadratic
% relations between $Q$ and the 
% equations of the cone produce non--trivial elements
% of $\H^0(C,N_{C/\P^{g-1}}(-2))$.

\paragraph{}
\label{ex:zak-tevelev}
The line of argument of \ref{p:unic-pf} may be applied to the more
general situation in which $C$ is a curve with Clifford index greater
than $2$, embedded by the complete linear system of an arbitrary very
ample line bundle $L$, with the proviso that the multiplication map
$\mu_{L,\omega_C}$ is surjective, which is equivalent to the condition
that $C$ has positive genus.
If the multiplication map is not surjective, then the relation between
$\H^0(N_C\otimes L\dual)$ and 
$\ker( \trsp \Phi_{L,\omega_C})$ is more complicated (see
\ref{p:identifications}), and indeed for rational normal
curves of degree $d>3$, there exist ribbons with several extensions, see
\cite[p.~276]{zak}.
For non-linearly normal curves, the same problem may also appear in
positive genus, \eg for hyperplane sections of irregular scrolls.

\section{Wahl maps and extensions of canonical curves}
\label{S:extension}

This Section is devoted to the proof of our main extension result,
Theorem~\ref{thm:cds2}, and its variant \ref{p:univ-ext}. 
We start by recalling the following auxiliary result.

%\subsection{Two auxiliary results}

% We start by recalling two auxiliary results that will be useful to
% study the properties of the extensions we construct.

\begin{theorem} 
\label{t:cm}
Let $X \subset \P^m$ be a variety of dimension $n$ having
a linear section which is a canonical curve.
Then $X$ is 
arithmetically Gorenstein, normal,
and has canonical sheaf $\omega_X \cong \O_X(2-n)$.
\end{theorem}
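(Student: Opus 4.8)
The statement is that if $X \subset \P^m$ has dimension $n$ and admits a linear section (by an $(m-n+1)$-dimensional linear subspace) which is a canonical curve $C \subset \P^{g-1}$, then $X$ is arithmetically Gorenstein, normal, with $\omega_X \cong \O_X(2-n)$.

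So the plan is induction on $n$. Base case $n=1$: then $X=C$ is a canonical curve, which is projectively normal (Petri/Noether), arithmetically Gorenstein, and $\omega_C \cong \O_C(1) = \O_C(2-1)$. Inductive step: suppose the result holds in dimension $n-1$; let $X$ have dimension $n$, and let $H$ be a general hyperplane so that $X' = X \cap H$ still has a linear section which is a canonical curve (take the same $(m-n+1)$-space intersected with $H$, or rather: the linear subspace $\Lambda$ cutting $C$ out of $X$ has dimension $g-1$, and $\dim X = n = g - 1 - (\text{something})$... let me be careful: $X \subset \P^m$, $C = X \cap \Lambda$ with $\dim \Lambda = m - n + 1$, so that $C$ is a curve means $\dim \Lambda = m - n + 1$ and $\dim C = 1$; and $C \cong$ canonical curve of genus $g$ means $\dim \Lambda = g-1$, so $m = g + n - 2$). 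Choosing a hyperplane $H$ containing $\Lambda$... no: choose $H$ general through $C$'s span appropriately so that $X' = X \cap H$ has dimension $n-1$ and still contains $C$ as a linear section. Then by induction $X'$ is arithmetically Gorenstein, normal, with $\omega_{X'} \cong \O_{X'}(2-(n-1)) = \O_{X'}(3-n)$.

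Then I would lift these properties from $X'$ to $X$. Arithmetically Cohen--Macaulay lifts along general hyperplane sections: if $X \cap H$ is projectively normal / ACM, then so is $X$ (standard, via the exact sequence $0 \to \O_X(k-1) \to \O_X(k) \to \O_{X'}(k) \to 0$ and a bootstrapping argument on $H^1(\mathcal{I}_X(k))$, using that for $k \ll 0$ it vanishes; also $H^i(\O_X(k)) = 0$ for $0 < i < n$ lifts). Normality of $X$ follows from normality of the general hyperplane section $X'$ together with $X$ being smooth in codimension one in a neighborhood... more precisely, $X$ is ACM hence satisfies Serre's $S_2$, and $X'$ normal implies $X$ is regular in codimension $1$ (a general hyperplane section of $X$ along its singular locus drops dimension), so by Serre's criterion $X$ is normal. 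For the canonical bundle: from adjunction, $\omega_{X'} = \omega_X \otimes \O_X(H)|_{X'} = \omega_X(1)|_{X'}$; since $\omega_{X'} \cong \O_{X'}(3-n)$ and $X$ is ACM (so $\omega_X$ is determined by its restriction in a suitable range, or one argues $\operatorname{Pic}$ restriction is injective for general hyperplane section when $n \geq 3$ by Grothendieck--Lefschetz, and handles $n=2$ separately), one concludes $\omega_X \cong \O_X(2-n)$; finally arithmetically Gorenstein is ACM plus $\omega_X \cong \O_X(\ell)$ for some $\ell$, which we have just established.

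The main obstacle I anticipate is the bookkeeping in the inductive step when $n=2$: the Grothendieck--Lefschetz type argument for transporting the canonical bundle isomorphism $\omega_{X'} \cong \O_{X'}(3-n)$ up to $X$ does not directly apply to surfaces cut down to curves, so one needs the ACM structure to identify $\omega_X$ as a graded module — namely, for an ACM variety $X$ the dualizing module is $\operatorname{Ext}^{m-n}_{\P^m}(\O_X, \omega_{\P^m})$, which is generated in a single degree once we know its Hilbert function, and the Hilbert function is forced by that of $C$ via repeated hyperplane sections. This is where I'd expect to spend the real effort, though in the paper's context one may simply cite the relevant statements (this is essentially \cite[Theorem 7.1]{wahl97} or standard liaison theory, \eg{} via \cite{stevens}), so the proof can probably be compressed to: reduce to curves by taking general hyperplane sections, invoke projective normality and self-duality of canonical curves, and lift ACM-ness, normality, and the canonical bundle formula one dimension at a time.
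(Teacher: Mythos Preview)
Your proposal is correct and follows the same approach as the paper. The paper's own proof is extremely terse: it simply states that the case $n=1$ is clear, that the general case follows by ``the hyperplane principle'', and cites \cite{epema} for $n=2$ and \cite{conte-murre} for $n=3$. Your write-up is essentially an unpacking of what that hyperplane principle means --- induct on $n$ by cutting with a hyperplane through $\Lambda = \langle C\rangle$, lift ACM via the standard ideal-sheaf sequence, deduce normality from $S_2$ plus $R_1$, and recover $\omega_X$ from adjunction --- so you are doing more than the paper, not less.

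One technical point worth flagging, since you chose to set this up as a genuine induction rather than a direct lift from $C$: to apply the inductive hypothesis to $X' = X \cap H$ you need $X'$ to be a variety, i.e.\ integral, and this is not automatic for hyperplanes constrained to contain $\Lambda$ (Bertini irreducibility requires the associated map to have image of dimension $\geq 2$, which fails e.g.\ for $n=3$). The clean way around this is the one you gesture at in your final paragraph: bypass integrality of the intermediate sections altogether and lift ACM directly from $C$ to $X$ using the iterated ideal-sheaf sequences, which only needs each linear form to be a non-zero-divisor on the previous slice. Once $X$ is ACM (hence $S_2$), normality and the identification of $\omega_X$ as a twist of $\O_X$ follow as you outline, and then the induction becomes legitimate a posteriori. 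This is precisely the content buried in the references the paper cites, so your instinct to compress by citation is well-founded.
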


\bigskip
This theorem is clear for $n=1$ and follows in general by the
hyperplane principle; proofs in the cases $n=2,3$ may be found in
\cite {epema}, \cite{conte-murre} respectively.

% \begin{theorem} [(Ishii,  \cite{ishii})]
% \label{t:ishii}
% Let $V$ be a $3$-dimensional, normal, Gorenstein projective variety
% such that $-K_V$ is ample.
% If the locus of non-rational singularities of $V$ has dimension $0$,
% then $V$ is a cone over a normal $K3$ surface.
% \end{theorem}

% \subsection{Extension theorem for canonical curves}
% \label{s:extension}

% We now proceed with our extension result and its corollaries.
% For all subsection~\ref{s:extension}, 

\paragraph{}
For the rest of the Section, we let $C \subset \P^ {g-1}$ be
a canonical curve of genus $g\geq 11$ and Clifford index $\Cliff(C)
>2$.
We choose a section
\begin{equation}
\label{eq:sec-coker-normal}
v \in \ker (\trsp \Phi_C)
\longmapsto
\bef_v \in \H^ 0(C, N _{C/\P^{g-1}}(-1))
\end{equation}
of the extension \eqref{w-exctsq} (for $L=\omega_C$) of vector
spaces, and fix homogeneous coordinates 
$(\bx:t) = (x_0:\ldots:x_{g-1}:t)$ on $\P^g$,
so that for all $v \in \ker (\trsp \Phi_C)$ there is a uniquely
determined extension $S_v$ of the ribbon $C_v$ in $\P^g$, given by
equations \eqref{eq:eqsurf}. %(see \S~\ref{S:ribbons}).

\begin{lemma}
\label{lem:homot}
Let $v \in \ker (\trsp \Phi_C)$, $v\neq 0$, and $\lambda\in \C^ *$.
The surface $S_{\lambda v}$ is obtained by applying to $S_v$ the projective
transformation $\omega_{\lambda ^{-1}}: (\mathbf x: t) \mapsto (\mathbf x:
\lambda^{-1} t)$.%
\end{lemma}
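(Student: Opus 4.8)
The plan is to track explicitly how the defining equations \eqref{eq:eqsurf} of $S_v$ transform under the substitution $t \mapsto \lambda^{-1} t$, and to recognize that the result is precisely the system defining $S_{\lambda v}$ for a suitable choice of the section \eqref{eq:sec-coker-normal} and of the vector of constants.

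First I would write out the equations. By construction, $S_v \subset \P^g$ is defined by $\mathbf f(\mathbf x) + t\,\mathbf f_v(\mathbf x) + t^2\,\mathbf h_v = \mathbf 0$, where $\mathbf f_v$ is the image of $v$ under the chosen linear section \eqref{eq:sec-coker-normal}, hence $\mathbf f_{\lambda v} = \lambda \mathbf f_v$, and where (by Observation~\ref{obs:quadratic}) the vector of constants depends quadratically on $v$, so $\mathbf h_{\lambda v} = \lambda^2 \mathbf h_v$ — here one uses that, thanks to $\Cliff(C) > 2$ and Lemma~\ref{l:fighissimo}, the lift $\mathbf h_v$ of $-\mathbf f_v \mathbf r_v$ is \emph{unique} (the ambiguity lives in $\H^0(C, N_{C/\P^{g-1}}(-2)) = 0$), so that the assignment $v \mapsto \mathbf h_v$ is a genuine, canonically determined quadratic map and in particular homogeneous of degree $2$. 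Applying $\omega_{\lambda^{-1}}$, i.e.\ replacing $t$ by $\lambda^{-1} t$ in the equations of $S_v$, yields
\[
\mathbf f(\mathbf x) + \lambda^{-1} t\,\mathbf f_v(\mathbf x) + \lambda^{-2} t^2\,\mathbf h_v = \mathbf 0,
\]
which, after multiplying through by the scalar $\lambda^{?}$ as needed (this does not change the zero locus), and using $\lambda \mathbf f_v = \mathbf f_{\lambda v}$, $\lambda^2 \mathbf h_v = \mathbf h_{\lambda v}$, is visibly the system $\mathbf f(\mathbf x) + t\,\mathbf f_{\lambda v}(\mathbf x)/\lambda^2 + \cdots$; a clean way to organize this is to check directly that the substituted equations cut out exactly the locus \eqref{eq:eqsurf} with $v$ replaced by $\lambda v$. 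So $\omega_{\lambda^{-1}}(S_v)$ is an extension of the ribbon $C_{\lambda v}$ (note $\omega_{\lambda^{-1}}$ fixes $C = \{t = 0\}$ pointwise and carries $C_v = \{\mathbf f + t\mathbf f_v = 0,\ t^2 = 0\}$ to $C_{\lambda v}$), and it is defined by equations of the shape \eqref{eq:eqsurf}.

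Finally I would invoke the uniqueness statement Remark~\ref{r:unicita}: since $g \geq 11$ and $\Cliff(C) > 2$, the surface extension of a given ribbon over $C$ inside $\P^g$, once one has fixed the section \eqref{eq:sec-coker-normal} and coordinates, is uniquely determined (the only freedom being an infinitesimal automorphism fixing $C$ pointwise, which does not alter the ribbon's isomorphism class, and the freedom in $\mathbf h_v$, which vanishes). As $\omega_{\lambda^{-1}}(S_v)$ is a surface extension of $C_{\lambda v}$ given by equations of exactly the normalized form \eqref{eq:eqsurf}, it must coincide with $S_{\lambda v}$. The main obstacle is bookkeeping: one must be careful that the chosen linear section $v \mapsto \mathbf f_v$ in \eqref{eq:sec-coker-normal} is genuinely \emph{linear} (so that $\mathbf f_{\lambda v} = \lambda \mathbf f_v$ on the nose, not merely up to an infinitesimal automorphism), and that the recipe producing $\mathbf h_v$ from $\mathbf f_v$ respects scaling — both of which are guaranteed by the construction recalled in \ref{p:wahl-ext-cstr}--\ref{p:unic-pf} together with the vanishing $\H^0(C, N_{C/\P^{g-1}}(-2)) = 0$. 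Once the equations are matched, the conclusion is immediate.
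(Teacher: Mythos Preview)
Your approach is essentially the paper's, with two points worth noting. First, a bookkeeping slip: to obtain the \emph{image} $\omega_{\lambda^{-1}}(S_v)$ you must pull back along the inverse, i.e.\ substitute $t \mapsto \lambda t$ into the equations of $S_v$, not $t \mapsto \lambda^{-1} t$; your substitution actually computes $\omega_\lambda(S_v)$, which is why the intermediate display refused to line up and you fell back on ``$\lambda^?$''. Done correctly one gets $\mathbf f(\mathbf x) + \lambda t\,\mathbf f_v(\mathbf x) + \lambda^2 t^2\,\mathbf h_v = \mathbf 0$, and then your claim that the resulting ribbon is $C_{\lambda v}$ is immediate. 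Second, the paper's argument is slightly leaner: it applies $\omega_\lambda$ to $S_{\lambda v}$, obtains $\mathbf f + t\,\mathbf f_v + (t^2/\lambda^2)\,\mathbf h_{\lambda v} = \mathbf 0$, observes that this surface contains the ribbon $C_v$, and concludes by uniqueness (i.e.\ by $\H^0(N_{C/\P^{g-1}}(-2))=0$) that it equals $S_v$---\emph{without} ever inputting the quadratic dependence of $\mathbf h_v$ on $v$. The identity $\mathbf h_{\lambda v} = \lambda^2 \mathbf h_v$ then falls out as a consequence, and the paper explicitly remarks that this gives an independent justification of Observation~\ref{obs:quadratic}. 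So your appeal to that observation is legitimate but unnecessary: once the substitution direction is fixed, your final uniqueness invocation already does all the work by itself, and the direct equation-matching via quadraticity is redundant.
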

%\vspace{\negvcorr}

\begin{proof} 
By linearity of the map \eqref{eq:sec-coker-normal},
the equations of $S_{\lambda v}$ are
\[
\mathbf f(\mathbf x)+\lambda t\mathbf f_v(\mathbf x)+ t^ 2 \mathbf
h_{\lambda v}=\mathbf 0.
\]
Then the equations of the surface $\omega_{\lambda}(S_{\lambda v})$ are 
% By applying the inverse of $\omega_\lambda$ we find the surface with
% equations
\[
\mathbf f(\mathbf x)+t\mathbf f_v(\mathbf x)+ \frac {t^ 2}{\lambda^ 2}
\mathbf h_{\lambda v}=\mathbf 0.
\]
The surface $\omega_{\lambda}(S_{\lambda v})$ thus
contains $C_v$, and therefore coincides with $S_v$.
\end{proof}

\bigskip
We remark that the above proof shows that  $\mathbf h_{v}$ depends
quadratically on $v$, thus giving another justification to our
Observation~\ref{obs:quadratic}. 
%\bluecom{(\ie $\mathbf h_{\lambda v} = \lambda^2 \mathbf h_{v}$)}

\begin{proposition}
\label{prop:vb}
Set $\cork(\Phi_C)= r+1$
and $\P^r = \P(\ker (\trsp \Phi_C))$.
There is a diagram\footnote{%
Beware that here
$\P(\O_{\P^r}^ {\oplus g}\oplus \O_{\P^r}(1))$ denotes the projective
bundle of one-dimensional quotients,
% of 
% $\O_{\P^r}^ {\oplus g}\oplus \O_{\P^r}(1)$,
whereas everywhere else in the text we use the classical notation for
projective spaces.
}
\begin{equation}
\label{eq:diagr} 
\xymatrix@C=5mm{ \mathcal S
\ar@{}[r]|(.25){\subset} \ar[dr]_{p} 
& \P(\O_{\P^r}^ {\oplus g}\oplus \O_{\P^r}(1)) \ar[d]^ \pi
\\ & \P^r }
\end{equation}
where $p: \mathcal S\to \P^r$ is a flat family of surfaces such that:
\begin{inparaenum}\\
\item the intersection of $\mathcal S$ with $\P(\O_{\P^r}^ {\oplus g})\cong \P^
{g-1}\times \P^ r$ is equal to $C\times \P^ r$;\\
\item
\label{it:univ}
for any $\xi=[v]\in \P^ r$, the inclusion  $\mathcal S_\xi\subset
\P(\O_{\P^r}^ {\oplus g}\oplus \O_{\P^r}(1))_\xi\cong \P^ g$ of fibres
of $p$ and $\pi$, is the
extension $S_v$ of $C=\mathcal S\cap \P(\O_{\P^r}^ {\oplus g})_\xi$.
\end{inparaenum}
\end{proposition}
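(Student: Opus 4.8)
The statement is that the collection of one-extensions $S_v$ of the canonical curve $C$, parametrized by $v\in\ker(\trsp\Phi_C)$, can be assembled into a single flat family $p\colon\mathcal S\to\P^r$ sitting inside the projective bundle $\pi\colon\P(\O_{\P^r}^{\oplus g}\oplus\O_{\P^r}(1))\to\P^r$, with the right fibres. The plan is to write down $\mathcal S$ by an explicit global version of Wahl's equations \eqref{eq:eqsurf}, then verify flatness and the two fibrewise properties. Concretely, fix the section $v\mapsto\bef_v$ of \eqref{w-exctsq} as in \eqref{eq:sec-coker-normal}; this is linear in $v$, so $\bef_v(\bx)$ is bilinear in $(v,\bx)$ and thus defines a vector of sections of $\O_{\P^r}(1)\boxtimes\O_{\P^{g-1}}(1)$. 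By Observation~\ref{obs:quadratic} (reproved via Lemma~\ref{lem:homot}), the vector of constants $\bh_v$ is quadratic in $v$, hence a section of $\O_{\P^r}(2)$. Writing $(\bx:t)$ for the relative homogeneous coordinates on $\P(\O_{\P^r}^{\oplus g}\oplus\O_{\P^r}(1))$ — where $\bx$ has relative degree $1$ and fibre-degree $0$ in $v$, and $t$ has relative degree $1$ and fibre-degree $-1$ in $v$ (this is precisely why the summand $\O_{\P^r}(1)$ appears, so that $t\bef_v$ and $t^2\bh_v$ become sections of the \emph{same} relative line bundle as $\bef(\bx)$) — the equations
\[
\bef(\bx)+t\,\bef_v(\bx)+t^2\,\bh_v=\mathbf 0
\]
make sense as global sections and cut out a subscheme $\mathcal S\subset\P(\O_{\P^r}^{\oplus g}\oplus\O_{\P^r}(1))$. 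The main point of the bookkeeping is that the line-bundle twists match up over $\P^r$: I would spell out the bidegrees so that each of the three terms lies in $\pi^*\O_{\P^r}(?)\otimes\O_\pi(2)$ for the same twist.

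Once $\mathcal S$ is defined, property (i) is immediate: setting $t=0$ (i.e.\ intersecting with the sub-bundle $\P(\O_{\P^r}^{\oplus g})\cong\P^{g-1}\times\P^r$) leaves exactly $\bef(\bx)=\mathbf 0$, which is $C\times\P^r$. Property (ii) is also essentially immediate: for $\xi=[v]$, the fibre $\mathcal S_\xi$ is cut out in $\P^g$ by $\bef(\bx)+t\bef_v(\bx)+t^2\bh_v=\mathbf 0$, which is \eqref{eq:eqsurf}, so $\mathcal S_\xi=S_v$ (here one uses that $\bef_v$ and $\bh_v$ restricted to $[v]$ agree with the choices made in \ref{p:wahl-ext-cstr}–\ref{p:unic-pf}, which holds by construction since those were linear/quadratic). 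The only slightly delicate point in (ii) is well-definedness: a point $\xi\in\P^r$ is a line $\C v$, not a vector $v$, so one must check that rescaling $v\mapsto\lambda v$ gives the \emph{same} subscheme of the fibre; but this is exactly Lemma~\ref{lem:homot}, which says $S_{\lambda v}=\omega_{\lambda^{-1}}(S_v)$, and the automorphism $\omega_{\lambda^{-1}}\colon(\bx:t)\mapsto(\bx:\lambda^{-1}t)$ is precisely the transition map of the bundle $\P(\O_{\P^r}^{\oplus g}\oplus\O_{\P^r}(1))$ between the two trivializations corresponding to $v$ and $\lambda v$. So the equations glue to a well-defined $\mathcal S$.

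The main obstacle is \textbf{flatness} of $p\colon\mathcal S\to\P^r$. Since $\P^r$ is integral and $\mathcal S$ is cut out by the above equations, I would argue flatness by showing the Hilbert polynomial of the fibres $\mathcal S_\xi$ is constant: each $\mathcal S_\xi=S_v$ is, by Wahl's construction (\cite[Thm.~7.1]{wahl97}, applicable since $g\ge11$ by \cite[Thm.~3]{abs1}), a surface extension of the fixed canonical curve $C$, hence arithmetically Cohen--Macaulay — indeed arithmetically Gorenstein with $\omega_{S_v}=\O_{S_v}(-1)$ — by Theorem~\ref{t:cm}. All these surfaces are irreducible, non-degenerate, of degree $2g-2$ in $\P^g$ with the same Hilbert function (determined by $C$ via the hyperplane section, using ACM-ness), so the Hilbert polynomial of $\mathcal S_\xi$ is independent of $\xi$; combined with $\P^r$ reduced and the fibre dimension being constant, this yields flatness of $p$ (e.g.\ by the standard criterion that a family over a reduced base with constant fibrewise Hilbert polynomial is flat). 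One should also verify that $\mathcal S$ is itself pure of the expected dimension, i.e.\ that the equations do not acquire extra components over special $v$; this follows because $\O_\pi(2)$-twisted sections cutting out ACM surfaces impose the expected codimension fibrewise, so no fibre jumps in dimension, and an integral base forces $\mathcal S$ irreducible of dimension $2+r$ dominating $\P^r$. I would close by noting that the special case $v=0$ gives the cone over $C$ as one fibre (matching \ref{p:unic=>wahl}), so the family genuinely interpolates between the cone and the non-cone extensions, which is what makes it "universal" in the sense of Definition~\ref{def:universal}.
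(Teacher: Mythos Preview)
Your approach is essentially the paper's: the paper works in affine charts of $\P^r$ (illustrated for $r=1$), writing $\bef(\bx)+t\,\bef_{v_0+a_1v_1}(\bx)+t^2\,\bh_{v_0+a_1v_1}=\mathbf 0$ over each chart and gluing via Lemma~\ref{lem:homot}, whereas you package the same construction globally by using the linearity of $\bef_v$ and the quadraticity of $\bh_v$ so that all three terms live in $\O_\pi(2)^{\oplus m}$; your flatness argument via constant Hilbert polynomial (using Theorem~\ref{t:cm}) spells out a step the paper leaves implicit. One slip in your closing remark: $v=0$ is not a point of $\P^r=\P(\ker(\trsp\Phi_C))$, so no fibre of $p$ is the cone over $C$ --- indeed no $S_v$ with $v\neq 0$ is a cone, which is exactly what is used to show $X$ is not a cone in Corollary~\ref{c:univ-ext}.
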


\begin{proof}
For simplicity we will do the case $r=1$, the general case being
similar.  Let $v_0,v_1$ be a basis of $\coker(\Phi_C)^ \vee$. For
$i=0,1$, consider the diagrams
\begin{equation}
\label{eq:diagr0}
\xymatrix@C=5mm{
\mathcal S_i
\ar@{}[r]|(.25){\subset} \ar[dr]_{p_i} 
& \P^g\times \A^1 
\ar[d]^{\pi_i} \\ 
& \A^1
}
\end{equation} 
given by the equations
\[
\mathbf f (\mathbf x) + t\mathbf f_{v_0+a_1 v_1}(\mathbf x)
+ t^ 2 \mathbf h_{v_0+a_1 v_1}=\mathbf 0, 
\quad \text{resp.}\quad 
\mathbf f(\mathbf x)+t\mathbf f_{a_0 v_0+ v_1}(\mathbf x)+ 
t^ 2 \mathbf h_{a_0 v_0+  v_1}=\mathbf 0,
\]
where $a_1$, resp.\ $a_0$, are affine coordinates on $\A^ 1$.  By
Lemma \ref {lem:homot}, these two diagrams are isomorphic over $\A^
1-\{0\}$ via the map 
$([\mathbf x:t],a_1) \in \mathcal S _0 
\mapsto ([\mathbf x:a_1 t], 1/a_1) \in \mathcal S _1$. 
Diagram \eqref {eq:diagr} is obtained by glueing the
diagrams \eqref {eq:diagr0} via this map.
\end{proof}

\begin{corollary}
\label{c:univ-ext}
Under the assumptions of Proposition~\ref{prop:vb}, there is an
arithmetically Gorenstein normal variety $X$ of dimension $r+2$ in
$\P^{g+r}$ with $\omega_X \cong \O_X(-r)$, not a cone, having $C$ as
a linear section, and satisfying the following property: the surface
linear sections of $X$ containing $C$ are in $1:1$ correspondence with
the surface extensions of $C$ in $\P^g$ that are not cones.
\end{corollary}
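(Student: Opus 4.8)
The plan is to derive Corollary~\ref{c:univ-ext} from the family of surfaces $p:\mathcal S \to \P^r$ constructed in Proposition~\ref{prop:vb} by taking a suitable cone-like total space and proving its properties via the hyperplane principle and Theorem~\ref{t:cm}. Concretely, I would first observe that the ambient projective bundle $\P(\O_{\P^r}^{\oplus g}\oplus \O_{\P^r}(1))$ over $\P^r$ embeds, via its tautological quotient line bundle, into $\P^{g+r}$: the projection $\pi$ collapses $\P^r$ to a point and the fibres $\P^g$ are embedded as linear $\P^g$'s through that point, so the whole bundle maps onto a cone over $\P^r$ — more usefully, $\P(\O_{\P^r}^{\oplus g}\oplus \O_{\P^r}(1))$ is exactly the blow-up of $\P^{g+r}$ along a linear $\P^{g-1}$ (namely the $\P(\O_{\P^r}^{\oplus g})$ sitting over no point, i.e.\ the section given by the trivial summand). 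I would then let $X \subset \P^{g+r}$ be the image of $\mathcal S$ under this map (equivalently, the join-type variety swept out by the linear spans of the fibres $S_v$). Property (i) of Proposition~\ref{prop:vb} says $\mathcal S$ meets the exceptional-type locus $\P(\O_{\P^r}^{\oplus g})$ in $C\times \P^r$, which under the blow-down maps isomorphically onto a single fixed canonical curve $C\subset \P^{g-1}\subset \P^{g+r}$; so $X$ does contain $C$ as a linear section (intersect with that $\P^{g-1}$).

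Next I would identify the general $\P^g$-linear section of $X$ through $C$. A linear $\P^g \subset \P^{g+r}$ containing the fixed $\P^{g-1}$ corresponds to a point $\xi \in \P^r$ (the directions transverse to $\P^{g-1}$), and I would check that cutting $X$ with this $\P^g$ recovers precisely the fibre $\mathcal S_\xi = S_v$ for $\xi=[v]$, using property (ii) of Proposition~\ref{prop:vb} together with flatness of $p$ (so no extra components appear and dimensions behave). Conversely, by \ref{sp:unicita}/Remark~\ref{r:unicita}, every surface extension $S'\subset\P^g$ of $C$ that is not a cone is isomorphic, by a projectivity fixing $C$ pointwise, to some $S_v$ with $v\in\ker(\trsp\Phi_C)$ uniquely determined up to scalar, hence to a unique $\mathcal S_\xi$; this gives the claimed $1:1$ correspondence between $\P^g$-linear sections of $X$ containing $C$ and non-cone surface extensions of $C$. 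I would also note $\dim X = \dim \mathcal S = \dim\P^r + \dim S_v - ? $, more carefully: $p$ has $2$-dimensional fibres over the $r$-dimensional base, and the blow-down is finite onto its image away from $\P^{g-1}$, so $\dim X = r+2$.

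Then I would establish the structural properties of $X$. That $X$ is not a cone: if it were a cone with vertex a point $q$, then every $S_v$ would be a cone with vertex (the image of) $q$ — but for $v\neq 0$ the extension $S_v$ is not a cone (as recalled right after Theorem~\ref{t:w+abs}, only the trivial ribbon integrates to a cone), contradiction, provided $r\geq 1$, i.e.\ $\cork(\Phi_C)\geq 2$; when $r=0$ the statement is vacuous or reduces to Theorem~\ref{t:w+abs}, so I would dispatch that trivial case separately. Irreducibility and reducedness of $X$ follow from those of $\mathcal S$ (it is a flat family over the irreducible base $\P^r$ with the glued total space of Proposition~\ref{prop:vb} connected, and generic fibre reduced). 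For normality, arithmetic Gorensteinness, and $\omega_X\cong\O_X(-r)$: here is where I invoke Theorem~\ref{t:cm} — $X\subset\P^{g+r}$ has dimension $n=r+2$ and has a linear section (by the $\P^{g-1}$) equal to the canonical curve $C$, so Theorem~\ref{t:cm} gives exactly arithmetically Gorenstein, normal, and $\omega_X\cong\O_X(2-n)=\O_X(-r)$, closing the proof.

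The main obstacle I anticipate is the bookkeeping in the blow-down/embedding step: one must be sure that the map from $\P(\O_{\P^r}^{\oplus g}\oplus\O_{\P^r}(1))$ to $\P^{g+r}$ restricted to $\mathcal S$ is birational onto its image $X$ (so that $\dim X = r+2$ and $X$ genuinely has the fibres $S_v$ as linear sections, not some multiple or projection thereof), and that the linear section giving $C$ is the reduced curve and not a thickening — this is why property (i), stating the intersection is $C\times\P^r$ \emph{scheme-theoretically}, is essential. A secondary point needing care is checking that distinct $\xi$ give distinct (non-projectively-equivalent-via-$C$-fixing-maps) linear sections, i.e.\ that the correspondence is genuinely injective; this rests on the uniqueness statement \ref{sp:unicita}, whose proof was sketched in \ref{p:unic-pf}. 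Once these are in place, everything else is a direct citation of Theorem~\ref{t:cm} and Proposition~\ref{prop:vb}.
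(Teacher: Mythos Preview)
Your proposal is correct and follows essentially the same route as the paper: map $\mathcal S$ into $\P^{g+r}$ via the tautological $\O(1)$ of the projective bundle (equivalently, blow down the exceptional $\P(\O_{\P^r}^{\oplus g})$ to $\P^{g-1}$), set $X$ equal to the image, identify the $\P^g$-sections through $C$ with the fibres $S_v$, use Remark~\ref{r:unicita} for the bijection, and cite Theorem~\ref{t:cm} for normality, arithmetic Gorensteinness and $\omega_X\cong\O_X(-r)$. One small imprecision worth fixing: if $X$ were a cone with vertex $q$, it is not that \emph{every} $S_v$ would be a cone, but rather that the single section by the $\P^g=\langle \P^{g-1},q\rangle$ would be the cone over $C$ with vertex $q$, and this already contradicts the fact that all such sections are $S_v$ with $v\neq 0$.
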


% According to Definition~\ref{def:universal}, a variety $X$ as in
% Corollary~\ref{c:univ-ext} is called a \emph{universal extension} of
% $C$. 

\begin{proof}
We keep the notation of Proposition~\ref{prop:vb}.
The $\O(1)$ bundle of $\P(\O_{\P^r}^ {\oplus g}\oplus
\O_{\P^r}(1))$ defines a morphism $\phi$ to $\P^ {g+r}$ which is the
blow-up of $\P^{g+r}$ along the $\P^ {g-1}$ image of the trivial
subbundle $\P(\O_{\P^r}^ {\oplus g})$. Let $X=\phi(\mathcal S)$. The
map $\phi_{|\mathcal S}$ is the contraction of $C\times \P^ r
= \mathcal {S} \cap \P(\O_{\P^r}^ {\oplus g})$ 
to $C\subset \P^ {g-1}\subset \P^ {g+r}$.
The fibres of $p$ are isomorphically mapped to the sections
of $X$ with the $\P^ g$'s containing the $\P^{g-1}$. None of these
surfaces is a cone, because the corresponding first order extensions
of $C$ on them are non-trivial. Therefore $X$ is not a cone. 
The property that the surface linear sections of $X$ containing
$C$ are in $1:1$ correspondence with the surface extension of $C$ in
$\P^g$ other than cones follows from assertion~\eqref{it:univ} in
Proposition~\ref{prop:vb} and Remark~\ref{r:unicita}. The rest of the
assertions follows by Theorem \ref {t:cm}.
\end{proof}

\begin{corollary}
\label{C:ext}
Consider a non-negative integer $r$ such that $\cork(\Phi_C) \geq
r+1$.
\subparagraph{}
\label{cor:ext}
There is an arithmetically Gorenstein normal variety $Y\subset \P^
{g+r}$ of dimension $r+2$ with $\omega_Y \cong \O_Y (-r)$, not a
cone, having $C$ as a curve section with $\P^ {g-1}$.
\subparagraph{}
\label{cor:ext2}
Assume $r>0$.
If there is a surface section of $Y$ with at worst ADE
singularities, then the general threefold section $V$ of $Y$ has
canonical singularities.
\end{corollary}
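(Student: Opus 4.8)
The two assertions \ref{cor:ext} and \ref{cor:ext2} should be handled in turn. For \ref{cor:ext}, the plan is simply to invoke the universal extension produced in Corollary~\ref{c:univ-ext}. Indeed, when $\cork(\Phi_C) = r+1$ exactly, that Corollary yields an arithmetically Gorenstein normal variety $X \subset \P^{g+r}$ of dimension $r+2$, not a cone, with $\omega_X \cong \O_X(-r)$, having $C$ as a linear section with a $\P^{g-1}$; this is precisely the statement of \ref{cor:ext} in that case, so one takes $Y = X$. When $\cork(\Phi_C) = r' + 1 > r+1$, one first applies Corollary~\ref{c:univ-ext} to get the $(r'+2)$-dimensional universal extension $X \subset \P^{g+r'}$ and then cuts with a general linear subspace of codimension $r' - r$ through the $\P^{g-1}$ containing $C$; by the hyperplane principle the resulting $(r+2)$-fold $Y \subset \P^{g+r}$ is again arithmetically Gorenstein and normal (this is exactly the content of Theorem~\ref{t:cm}, applied to the linear section which still has $C$ as a curve section), it is not a cone because its generic surface section carries a non-trivial first-order extension of $C$ (one must choose the linear subspace general enough that some surface section meets the locus where the ribbon is non-split), and the canonical sheaf formula $\omega_Y \cong \O_Y(-r)$ is delivered by Theorem~\ref{t:cm} with $n = r+2$. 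The main point to check carefully is that a general linear section of $Y$ of the right dimension through $\P^{g-1}$ is still not a cone; this follows because cones over $C$ correspond to the trivial ribbon, while the surface sections of the universal extension realise \emph{all} non-trivial ribbon classes, so a general codimension count shows a general section still sees a non-trivial one.

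For \ref{cor:ext2}, suppose $r > 0$ and that $Y$ has a surface section $S_0$ with at worst ADE singularities, so that $S_0$ is a (possibly singular) $K3$ surface by the definitions in \S\ref{S:prelim}. The plan is to consider the pencil — or more generally the linear system of codimension $r-1$ sections of $Y$ through the $\P^{g-1}$ — of threefold sections $V$ of $Y$, i.e.\ the intersections of $Y$ with linear subspaces of dimension $g+1$ lying inside the $\P^{g+r}$ and containing the fixed $\P^{g-1}$. Each such $V$ is an arithmetically Gorenstein normal threefold with $\omega_V \cong \O_V(-1)$ (Theorem~\ref{t:cm} with $n=3$), and by \ref{p:prokhorov} (Corollary~\ref{C:ext} is being invoked here only in its already-proven part~\ref{cor:ext}, together with the cited fact that an arithmetically Gorenstein normal threefold whose hyperplane section is a possibly singular $K3$ is a Fano threefold with canonical singularities) such a $V$ is automatically a Fano threefold with Gorenstein canonical singularities \emph{provided} it has a hyperplane section with at worst ADE singularities. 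So the real task is: knowing that \emph{one} threefold section $V_0$ (the one containing $S_0$, or rather any $V_0$ whose hyperplane section is $S_0$) is fine, show that the \emph{general} member of the family of threefold sections through $\P^{g-1}$ has a hyperplane section with at worst ADE singularities. This is a standard semicontinuity/Bertini-type argument: the condition of having at worst canonical (equivalently, for a surface, ADE) singularities is open in families, and since it holds for $V_0$ — whose anticanonical section $S_0$ has ADE singularities — it holds for $V$ in a dense open subset of the base; hence the general threefold section $V$ has a hyperplane section with only ADE singularities, and then \ref{p:prokhorov}/Corollary~\ref{C:ext}\ref{cor:ext} forces $V$ itself to have canonical singularities.

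The step I expect to be the genuine obstacle is the openness argument in \ref{cor:ext2}: one is not free to move $V_0$ inside an abstract family of threefolds, but only inside the \emph{linear} family of hyperplane-type sections of the fixed ambient $Y$, all constrained to pass through $\P^{g-1} \supset C$. One must argue that within this constrained linear system the locus of sections $V$ with worse-than-canonical singularities is a proper closed subset — equivalently, that the subfamily of hyperplane sections of these $V$'s with worse-than-ADE singularities is proper closed — using that $V_0$ (with its ADE hyperplane section $S_0$) furnishes one point outside it. Once this Bertini-with-base-locus type statement is in place, the conclusion is immediate by the already-cited structural result \ref{p:prokhorov}. Everything else — the normality, arithmetic Gorensteinness, and the canonical-sheaf twist — is a bookkeeping application of Theorem~\ref{t:cm} and poses no difficulty.
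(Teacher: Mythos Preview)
Your plan for \ref{cor:ext} matches the paper's exactly: take $Y$ to be a linear section, of the appropriate dimension through $\P^{g-1}$, of the universal extension $X$ from Corollary~\ref{c:univ-ext}, and read off the structural properties from Theorem~\ref{t:cm}.

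For \ref{cor:ext2} you take a longer route than the paper. The paper's proof is one sentence: by the argument in \cite[Introduction]{prokhorov}, an anticanonically embedded Gorenstein Fano threefold with non-canonical singularities is a cone; since $Y$ is not a cone, neither is its general threefold section $V$, contradiction. No openness of ADE singularities is invoked. Your plan instead first argues that the general surface section of $Y$ has at worst ADE singularities (openness in flat families), and then applies the fact recorded in \ref{p:prokhorov} to a threefold section containing such a surface. This can be made to work, but two remarks are in order. First, your citation of \ref{p:prokhorov} is logically inverted: that paragraph is a forward reference to Corollary~\ref{C:ext} itself for its justification, so the substantive external input in both approaches is Prokhorov's dichotomy. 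Second, the openness step---which you correctly flag as the ``genuine obstacle''---is simply unnecessary once you apply Prokhorov's dichotomy directly to the general $V$: all that remains to check is that the general linear section of a non-cone is not a cone, which is elementary. So your argument is correct but the detour through openness can be removed entirely.
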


\bigskip
Note that \ref{cor:ext2} applies to a universal extension of $C$ as
soon as $C$ sits on a $K3$ surface with at worst $ADE$ singularities.

\begin{proof}
Assertion~\ref{cor:ext} follows directly from the previous
Corollary~\ref{c:univ-ext}: take $Y$ a linear section of $X$
containing $C$ of the appropriate dimension.
To prove \ref{cor:ext2}, we note that by the argument in
\cite[Introduction]{prokhorov}, if $V$ had non-canonical
singularities, it would be a cone, a contradiction.
\end{proof}

\section{Integration of ribbons over $K3$ surfaces}
\label{S:intK3}

In this Section we prove Theorem~\ref{t:intK3}, to the effect that
any ribbon on a $K3$ surface in $\P^g$ may be integrated to a
unique threefold in $\P^{g+1}$ (under suitable assumptions).
It will be deduced from the integrability of
ribbons on canonical curves by a hyperplane principle. Key to this
principle is the relation between ribbons over a variety in projective
space and over its hyperplane sections, as explained in the following
paragraph.

\paragraph{}
\label{p:hyp-princ}
In this paragraph, we use without further reference the notions and
results recalled in Section~\ref{S:ribbons}.
% subsections~\ref{s:ribb} and \ref{s:gauss-ribbs}.
Let $S$ be a smooth $K3$ surface in $\P^g$, and $C$ a smooth
hyperplane section of $S$. Recall that we denote by $[2C_S] \in 
\P (\ker (\trsp \Phi_C)) \subseteq \P(\H^1(T_C(-1))$ the ribbon over
$C$, considered up to isomorphism,  given by its being a hypersurface in
$S$.

Let $[\tilde S] \in \P(\H^1(T_S(-1)))$.  This is the isomorphism class
of the ribbon
$\tilde S \subset \P^{g+1}$ over $S$, contained in the ribbon 
$2(H_S)_{\P^{g+1}}$ over the
hyperplane $H_S = \vect S$, and such that $\tilde S \cap H_S = S$.

Now for any hyperplane $H \subset \P^{g+1}$ containing $C$, 
$H \neq H_S$, 
the intersection $H \cap \tilde S$ is a ribbon $C_H$ over $C$ in $H
\cong \P^g$, contained in the ribbon $2 \vect C _{H}$ over $\vect C
\cong \P^{g-1}$,
and such that $C_H \cap \vect C =
C$.
As such, it determines a point of $\P (\ker (\trsp \Phi_C))$.

Thus, the pencil of hyperplanes of $\P^{g+1}$ containing $C$ defines a
line in $\P (\ker (\trsp \Phi_C))$ passing through the point $[2C_S]$;
in other words, $\tilde S$ defines a 
point $\mathfrak l_{\tilde S} \in 
\P \bigl(\ker (\trsp \Phi_C)\bigr) / [2C_S]$.\footnote
{Here and in the rest of this Section, we use the following
non-standard but convenient notation: if $W$ is a vector subspace of
$V$, we write $\P(V)/\P(W)$ for $\P(V/W)$.
}
We are abusing terminology here, as this ``line'' may actually be
reduced to the sole point $[2C_S]$ if all $H$ containing $C$ cut out the
same ribbon over $C$ on $\tilde S$, and in this case $\mathfrak
l_{\tilde S}$ is not well-defined; it will be a consequence of
\ref{p:K3integr} below that this does not happen.

\refstepcounter{paragraph}
\begin{proof}[{\bf \theparagraph} Proof of the existence part
of Theorem~\ref{t:intK3}]
\label{p:K3integr}
We identify $S$ with its
image in $\P^g = |L|\dual$.
Choose any smooth hyperplane section $C$ of $S$. It satisfies the
same assumptions as $S$ on the genus and Clifford index, so we may
consider its universal extension $X \subset \P^{g+r}$ constructed in
Corollary~\ref{c:univ-ext},
% (see \ref{p:univ-ext} and Section~\ref{S:extension} for the notion of
% universal extension)
with
\[
r = \cork (\Phi_C) - 1 = h^1(T_S(-1)),
\]
the second equality in this equation coming from
Corollary~\ref{c:intrsting-rmk}.
By Corollary~\ref{c:univ-ext}, we may consider $S$ as a linear section
of $X$.

Now, every linear $(g+1)$-subspace $\Lambda$ of $\P^{g+r}$  containing
$S$ cuts out a threefold $X_\Lambda$ on $X$ having $S$ as a hyperplane
section, 
hence determines a ribbon 
$2S_\Lambda := 2S_{X \cap \Lambda} \in \H^1(T_S(-1))$,
which in turn determines a point of 
$\P \bigl(\ker (\trsp \Phi_C)\bigr) / [2C_S]$
via the mechanism described in \ref{p:hyp-princ}.
We thus have a composed map
\begin{equation}
\label{eq:ls-rib}
\psi_S: \Lambda \in \P^{g+r} / \vect S %\cong \P^{r-1}
\longmapsto [2S_\Lambda] \in \P (\H^1 (T_S(-1)))
\longmapsto 
\mathfrak l_{2S_\Lambda} \in
\P \bigl(\ker (\trsp \Phi_C)\bigr) / [2C_S], %\cong \P^{r-1}
\end{equation}
albeit maybe only defined so far on a (possibly empty!) Zariski open
subset of $\P^{g+r} / \vect S$ because of the abuse of terminology
mentioned in \ref{p:hyp-princ}.

We claim that the universality of $X$ implies the surjectivity of
$\psi_S$.
Consider a point of 
$\P \bigl(\ker (\trsp \Phi_C)\bigr) / [2C_S]$, and
represent it as a point $[\tilde C]$ of $\P (\ker (\trsp \Phi_C))$
distinct from $[2C_S]$. The universality of $X$ tells us that there
exists a linear $g$-subspace $\Gamma$ of $\P^{g+r}$ such that
$\tilde C = 2C_{X \cap \Gamma}$.
Then $\Lambda:=\vect {\Gamma, S}$ is a $(g+1)$-subspace of $\P^{g+r}$
such that, by construction, $\psi_S(\Lambda) = [\tilde C]$. This
proves our claim.

Now note that in diagram~\eqref{eq:ls-rib}, all three projective spaces
have the same dimension $r-1$, and the two maps whose composition is
$\psi_S$ are linear.
Therefore, the map $\psi_S$ may be surjective only if it is an
isomorphism, and the two maps in \eqref{eq:ls-rib} are isomorphisms as
well. 
We conclude by observing that the surjectivity of the
first map in \eqref{eq:ls-rib}
tells us that for every isomorphism class of ribbons $[\tilde S] \in
\P(\H^1(T_S(-1)))$ there is a threefold $X \cap \Lambda$ such
that $[\tilde S] = [2S_{X \cap \Lambda}]$.
\end{proof}

\refstepcounter{paragraph}
\begin{proof}[{\bf \theparagraph} Proof of the unicity part
of Theorem~\ref{t:intK3}]
\label{p:unicK3}
%Let $S$ be a $K3$ surface as in \ref{p:K3integr}, and 
Consider two
threefold extensions $V$ and $V'$ of $S$ such that the two
corresponding ribbons $2S_V$ and $2S_{V'}$ are proportional.
It follows from the considerations in \ref{p:hyp-princ} and
the unicity of integrals of ribbons over canonical curves
(Remark~\ref{r:unicita}), that the
two threefolds $V$ and $V'$
respectively contain two isomorphic pencils of hyperplane
sections, and this implies that they are isomorphic.
\end{proof}

\begin{remark}
In the case of the trivial ribbon, the conclusion of \ref{p:unicK3} is
that if a $K3$ surface as in 
%\ref{p:K3integr} 
Theorem~\ref{t:intK3}
sits on a threefold $V
\subset \P^{g+1}$, not a cone, then the conormal exact sequence of $S$
in $V$ is not split. By reproducing the argument of
\cite[Proposition~3]{beauville-merindol}, this implies that there does
not exist any automorphism of $V$ of order $2$ and with $S$ as fix
locus.
\end{remark}

\section{Study of the moduli maps}

This Section 
% is devoted to the study of the moduli maps $c_g$ and
% $s_g$ (see Section~\ref{S:prelim}). It 
contains the building
blocks of the proofs of Theorems~\ref{thm:cds} and \ref{thm:cdsK3}.
The following Corollary of Theorems \ref{t:w+abs}
and \ref{t:intK3} comes in a straightforward manner
once one understands the latter Theorems as integration results for
ribbons.

% \subsection{Fibres}

% In this subsection we give two corollaries of theorems \ref{t:w+abs}
% and \ref{t:intK3} respectively, which come in a straightforward manner
% once one understands the latter theorems as integration results for
% ribbons.

\begin{corollary}\label{cor:palese-vero}
Let $(S,C) \in \KC_g$ 
(resp.\ $(V,S) \in \FS_g$)
be such that $\Cliff (C) >2$
(resp.\ $\Cliff (S, -\restr {K_V} S) >2$). Then
\begin{equation*}%\label{eq:estdim}
 \dim(c_g^ {-1}(C))\geq \cork (\Phi_C)-1
\qquad
\text{(resp.\ }
 \dim(s_g^ {-1}(S))\geq h^1\bigl( T_S(-1)
\bigr)-1
\text{).}
\end{equation*}
\vspace{\negvcorr}
\end{corollary}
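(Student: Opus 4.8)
The plan is to exhibit, for each element of $\P(\ker(\trsp\Phi_C))$ (resp.\ $\P(\H^1(T_S(-1)))$), a corresponding point in the fibre of the moduli map, and to check that distinct elements give distinct points, so that the fibre has dimension at least that of the projectivized kernel, namely $\cork(\Phi_C)-1$ (resp.\ $h^1(T_S(-1))-1$). First I would treat the curve case. Given $(S,C)\in\KC_g$ with $\Cliff(C)>2$, the surface $S$ realizes a $1$-extension of the canonical model of $C$, hence a nonzero ribbon $2C_S\in\ker(\trsp\Phi_C)$ in the notation of \ref{p:unobstr-ribbons}. For each $[v]\in\P(\ker(\trsp\Phi_C))$, Theorem~\ref{t:w+abs} (combined with the unicity in Remark~\ref{r:unicita}) produces a surface $S_v\subset\P^g$, unique up to projectivities pointwise fixing $C$, containing a ribbon over the canonical model of $C$ in the class $[v]$; since $[v]\neq 0$ this $S_v$ is not a cone, so it is a genuine $K3$-type extension, and after taking its minimal desingularization we get a point $(S_v,C)$ (for an appropriate identification of $C$ with a curve on $S_v$) lying in $c_g^{-1}(C)$.

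The key point is injectivity of this assignment: if $[v]\neq[v']$, then $S_v$ and $S_{v'}$ carry non-proportional ribbons over $C$, hence cannot be projectively equivalent by an isomorphism fixing $C$, and more to the point their moduli points in $\KC_g$ are distinct — this is exactly the content of the unicity statement \ref{sp:unicita} / Remark~\ref{r:unicita}, which says the surface extension is determined by the isomorphism class of the ribbon. Thus we obtain an injection $\P(\ker(\trsp\Phi_C))\hookrightarrow c_g^{-1}(C)$ that is moreover a morphism when set up in a family: concretely, Proposition~\ref{prop:vb} already packages all the $S_v$ into a flat family $\mathcal S\to\P^r$ with $r=\cork(\Phi_C)-1$, and composing with the classifying map to $\KC_g$ gives a morphism $\P^r\to c_g^{-1}(C)$ which is injective on points by the unicity. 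A morphism from an $r$-dimensional irreducible variety that is injective on closed points has image of dimension $\geq r$ (the generic fibre is finite), so $\dim(c_g^{-1}(C))\geq r=\cork(\Phi_C)-1$, as claimed.

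For the $K3$ case the argument is formally identical, substituting Theorem~\ref{t:intK3} for Theorem~\ref{t:w+abs}: given $(V,S)\in\FS_g$ with $\Cliff(S,-\restr{K_V}S)>2$, the threefold $V$ furnishes a nonzero ribbon $2S_V\in\H^1(T_S\otimes L\dual)=\H^1(T_S(-1))$, and for each $[v]\in\P(\H^1(T_S(-1)))$ Theorem~\ref{t:intK3} produces a threefold $V_v\subset\P^{g+1}$, unique up to projectivities, integrating the ribbon class $[v]$; for $[v]\neq 0$ it is not a cone, and (after the usual desingularization/normalization bookkeeping) it gives a point of $\FS_g$ in the fibre $s_g^{-1}(S)$. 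The unicity in Theorem~\ref{t:intK3} shows the assignment is injective on points, and the same dimension-count yields $\dim(s_g^{-1}(S))\geq h^1(T_S(-1))-1$. One can also run this through the proof of \ref{p:K3integr}, where the family of threefold extensions of $S$ is realized inside the universal extension $X$ of a hyperplane section $C$ of $S$, which gives the morphism to the Hilbert scheme and hence to $\FS_g$ directly.

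The main obstacle I anticipate is bookkeeping rather than substance: making precise that the set-theoretic injection $\P^r\hookrightarrow c_g^{-1}(C)$ is (or refines to) a morphism of stacks, so that one may legitimately pass from injectivity-on-points to a lower bound on dimension. This requires either invoking the family in Proposition~\ref{prop:vb} together with its classifying morphism to $\KC_g$, or arguing more softly that distinct ribbons yield non-isomorphic pairs (which already gives a lower bound on the \emph{cardinality}-stratified dimension via the constructibility of the image). A secondary point of care is the identification of $C$ as a curve on the abstract surface $S_v$: the ribbon lives over a \emph{fixed} canonical model $C_0\subset\P^{g-1}$, and one must check that the induced marking $C\hookrightarrow S_v$ is well-defined up to the automorphisms that are quotiented out in $\KC_g$, so that the map to moduli is unambiguous. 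Neither of these is deep given the unicity results already established; the real content has been front-loaded into Theorems~\ref{t:w+abs} and \ref{t:intK3} and the universality statement \ref{p:univ-ext}.
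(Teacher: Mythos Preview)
Your approach is essentially the same as the paper's, but there is one genuine gap. You assert that for $[v]\neq 0$ the surface $S_v$ ``is a genuine $K3$-type extension, and after taking its minimal desingularization we get a point $(S_v,C)\in c_g^{-1}(C)$''. This is not justified: a non-cone surface extension of a canonical curve need not be a $K3$ surface, even with $ADE$ singularities --- it may be a fake $K3$ surface (rational or ruled with an elliptic singularity, cf.\ Theorem~\ref{thm:epema}), whose minimal desingularisation is \emph{not} a $K3$. Likewise in the Fano case, a threefold extension of $S$ need not be smooth, so ``desingularization/normalization bookkeeping'' does not automatically land you in $\FS_g$.

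The paper closes this gap with one extra observation: the given pair $(S,C)\in\KC_g$ furnishes a \emph{smooth} $K3$ surface $S$ which, by unicity (Remark~\ref{r:unicita}), coincides with the fibre $S_{[2C_S]}$ of the family $p:\mathcal S\to\P(\ker(\trsp\Phi_C))$. Since smoothness (and hence being a $K3$) is an open condition in a flat family, the \emph{general} fibre $S_{[v]}$ is then a smooth $K3$ surface, and one gets a rational map from $\P(\ker(\trsp\Phi_C))$ to $c_g^{-1}(C)$ defined on a dense open. Your injectivity argument then goes through (the paper spells it out slightly differently, distinguishing whether $S_{[v]}\cong S_{[v']}$ abstractly or not, but the content is the same), and the dimension bound follows. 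The Fano case is identical, using that $V$ is a fibre of the analogous family of threefold extensions of $S$.
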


\begin{proof}
Consider the family 
$p: \mathcal{S} \to \P (\ker (\trsp \Phi_C))$ constructed in
Proposition~\ref{prop:vb}. The $K3$ surface $S$ is a fibre of $p$, so
the fibre $S_{[v]}$ of $p$ over the general $[v] \in \P (\ker (\trsp
\Phi_C))$ 
is a $K3$ surface as well, hence gives rise to a point 
$(S_{[v]},C) \in c_g^{-1}(C)$.
We claim that these points are pairwise distinct, from which the
assertion follows at once.
\par Let $[v],[v']$ be two distinct points of $\P (\ker (\trsp
\Phi_C))$. If $S_{[v]}$ and $S_{[v']}$ are not isomorphic, then the
claim is trivial; else, we may assume $S_{[v]} = S_{[v']}$, and call
this surface $S_0$. There are two copies $C_{[v]}$ and $C_{[v']}$ of
$C$ in $S_0$, and since $[v] \neq [v']$, the respective infinitesimal
neighbourhoods of $C_{[v]}$ and $C_{[v']}$ in $S_0$ are not
isomorphic, which implies that $C_{[v]}$ and $C_{[v']}$ correspond to
two distinct points of the linear system $|\O _{S_0} (C)|$
and there is no automorphism of $S_0$ sending one of the two curves to
the other. This proves the first instance of the statement.

The proof of the second instance is exactly the same, after one notes
that there exists a family
$p: \mathcal V \to \P(H^1( T_S(-1)))$
with properties
analogous to those of the previous family 
$p: \mathcal{S} \to \P (\ker (\trsp \Phi_C))$,
as follows from the arguments in Section~\ref{S:intK3}:
this is Theorem~\ref{thm:cds2K3}!
\end{proof}

% \bigskip
% The following is the analogue of Corollary~\ref{cor:palese-vero} for
% the moduli map $s_g$. Its proof is exactly the same.

% \begin{corollary}
% \label{cor:palese-veroK3}
% Let $(V,S) \in \FS_g$ be such that $\Cliff (S) >2$. Then
% \begin{equation*}%\label{eq:estdim}
%  \dim(s_g^ {-1}(S))\geq h^1\bigl( T_S(-1)
% \bigr)-1.
% \end{equation*}
% \vspace{\negvcorr}
% \end{corollary}

% \begin{proof}
% It follows from the arguments in Section~\ref{S:intK3} that there
% exists a family 
% \[
% p: \mathcal V \to \P(H^1( T_S(-1)))
% \]
% with properties
% analogous to those of the family 
% $p: \mathcal{S} \to \P (\ker (\trsp \Phi_C))$ in the proof of
% Corollary~\ref{cor:palese-vero}: this is Theorem~\ref{thm:cds2K3}!
% Then, one argues exactly as in the proof of 
% Corollary~\ref{cor:palese-vero}.
% \end{proof}

% \subsection{Control of the ramification}

% \paragraph{}
% Let $X$ be a smooth, projective, irreducible variety and $Y$ a codimension 1 subscheme of $X$. 
% The sheaf $T_X\langle Y \rangle$ is by
% definition the kernel of the surjective composed map
% \[
% T_X \to \left. T_X \right|_Y \to N_{Y/X}.
% \]
% It fits in the exact sequence
% \begin{equation}
% \label{eq:tw-restr}
% 0 \to T_X(-Y) \to T_X\langle Y \rangle \to
% T_Y \to 0.
% \end{equation}

\bigskip
The two following results bound from above the dimensions of the
kernels of the differentials of $c_g$ and $s_g$.

\begin{lemma}[(see \cite{ilsernesi}, \S~3.4.4)]
\label{l:sernesi}
Let $(S,C) \in \KC_g$ (resp.\ $(V,S)\in \FS_g$). 
The kernel of the differential of $c_g$ at $(S,C)$ 
(resp.\ of $s_g$ at $(V,S)$) is
% the morphism
% \begin{equation*}
% \H^1 \bigl( T_S\langle C \rangle \bigr) \to
% \H^1(T_C) \quad (\text{resp.} \H^1 \bigl( T_V\langle S \rangle \bigr) \to
% \H^1(T_S))
% \end{equation*}
% coming from the exact sequence \eqref{eq:tw-restr}.
% Its kernel is 
$\H^1\bigl(T_S(-C)\bigr)$ (resp.\ $\H^1\bigl(T_V(-S)\bigr)$).
\end{lemma}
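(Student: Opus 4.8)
The statement is a standard deformation-theoretic identification of the kernel of the differential of a forgetful map, so the plan is to realize both moduli problems as parameter spaces of pairs and compute the relevant tangent maps via the conormal/normal bundle sequences. First I would recall that $\M_g$ has tangent space $\H^1(T_C)$ at $[C]$, while the tangent space to $\KC_g$ at $(S,C)$ fits into an exact sequence coming from deforming the pair: a first-order deformation of $(S,C)$ is a first-order deformation of $S$ together with a compatible first-order deformation of the divisor $C \subset S$. The deformations of $C$ inside a fixed $S$ are governed by $\H^0(N_{C/S})$, those of the ambient $S$ by $\H^1(T_S)$, and the forgetful map $c_g$ sends a deformation of the pair to the induced abstract deformation of $C$, i.e.\ to the image of the tangent vector under the composition $T_{(S,C)}\KC_g \to \H^1(T_S) \to \H^1(\restr{T_S}{C})$ combined with the boundary map from $\H^0(N_{C/S})$; the net effect is that $(dc_g)_{(S,C)}$ is identified with the natural map into $\H^1(T_C)$.

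The key computation is then to identify the kernel. I would use the exact sequence obtained from the normal bundle sequence $0 \to T_C \to \restr{T_S}{C} \to N_{C/S} \to 0$ twisted appropriately, together with the ideal sheaf sequence $0 \to T_S(-C) \to T_S \to \restr{T_S}{C} \to 0$ on $S$ (this uses $N_{C/S} = \O_C(C)$ and that $T_S$ is locally free). The first-order deformations of the pair $(S,C)$ that induce the trivial deformation of the abstract curve $C$ are exactly those deformations of $S$ that carry $C$ along and restrict trivially on $C$; concretely these are classified by $\H^1(T_S(-C))$, the obstruction/deformation group for deforming $S$ "rel $C$ with $C$ moving trivially". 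The precise bookkeeping is a diagram chase among the long exact cohomology sequences of the two short exact sequences above, using $\H^0(T_S)=0$ (Mori--Sumihiro--Wahl, as invoked in Lemma~\ref{l:wahl1}) and, for a $K3$ surface, $\H^0(T_S)=\H^1(\O_S)=0$ so that the $\H^1(T_S) \to \H^1(\restr{T_S}{C})$ portion behaves cleanly. The result is the asserted identification $\ker(dc_g)_{(S,C)} = \H^1(T_S(-C))$, and the Fano case is formally identical with $(V,S)$ in place of $(S,C)$, $T_V(-S)$ in place of $T_S(-C)$, using $\H^0(T_V)=0$ and $\H^1(\O_V)=0$ for a Fano threefold.

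Since the paper attributes the statement to \cite[\S~3.4.4]{ilsernesi}, I would keep the argument brief: recall that the tangent space to the moduli stack of pairs $(S,C)$ (resp.\ $(V,S)$) consisting of a variety together with a divisor is $\H^1(T_S\langle C\rangle)$ (resp.\ $\H^1(T_V\langle S\rangle)$), where $T_S\langle C\rangle$ is the sheaf of vector fields tangent to $C$, fitting into $0 \to T_S\langle C\rangle \to T_S \to N_{C/S} \to 0$; the forgetful differential is then the composite $\H^1(T_S\langle C\rangle) \to \H^1(T_S\langle C\rangle|_C) \to \H^1(T_C)$ induced by restriction to $C$ followed by the natural identification, and its kernel is the image of $\H^1(T_S(-C))$ under the inclusion $T_S(-C) \hookrightarrow T_S\langle C\rangle$, which is injective on $\H^1$ because the quotient $T_S\langle C\rangle/T_S(-C) \cong T_C$ has $\H^0(T_C)=0$ when $g\geq 2$. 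This last injectivity, together with showing the image is all of the kernel, is the only point requiring care.

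\textbf{Main obstacle.} The genuinely delicate part is the stack-theoretic vs.\ scheme-theoretic bookkeeping — making sure that "$\ker(dc_g)$" is computed on the moduli stacks $\KC_g \to \M_g$ with the correct automorphism contributions, and that the various $\H^0$ vanishings ($\H^0(T_S)=0$, $\H^0(T_C)=0$, $\H^0(T_V)=0$) are exactly what is needed to make the connecting maps injective so that no spurious contributions appear. Everything else is a routine chase through two long exact sequences, and since a reference is cited I would present it as such rather than reproving it in full.
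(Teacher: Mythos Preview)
Your proposal is correct and follows the standard approach via the log tangent sheaf $T_S\langle C\rangle$ and the short exact sequences $0\to T_S(-C)\to T_S\langle C\rangle\to T_C\to 0$ and $0\to T_S\langle C\rangle\to T_S\to N_{C/S}\to 0$, together with $\H^0(T_C)=0$; this is exactly the argument in the cited reference. The paper itself gives no proof of this lemma --- it simply refers to \cite[\S~3.4.4]{ilsernesi} --- so there is nothing further to compare.
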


\begin{proposition}\label{prop:ineq}
\label{prop:ineq2}
 Let $(S,C) \in \KC_g$ 
(resp.\ $(V,S) \in \FS_g$)
be such that $\Cliff (C) >2$
(resp.\ $\Cliff (S, -\restr {K_V} S) >2$). Then
\begin{equation*}
% \label{eq:est}
h^1(T_S(-1))+1 \leq \cork (\Phi_C)
\qquad \text{(resp.\ }
h^1(T_V(-1))+1 \leq h^ 1(T_S(-1))
\text{)}.
\end{equation*}
\end{proposition}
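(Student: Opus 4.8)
\textbf{Proof plan for Proposition~\ref{prop:ineq}.}
The plan is to prove the curve statement first and then obtain the Fano statement by the exact same argument applied one dimension up (replacing the pair $(S,C)$ by $(V,S)$), so I will concentrate on the inequality $h^1(T_S(-1))+1 \leq \cork(\Phi_C)$. First I would set $r+1 := \cork(\Phi_C)$ and use Lemma~\ref{l:wahl1} in the $K3$ case: the exact sequence \eqref{w-exctsq2} for $X=S$, $L=\O_S(1)$ identifies $h^0(N_{S/\P^g}(-1)) = h^1(T_S(-1)) + g + 1$, i.e.\ $\alpha(S) = h^1(T_S(-1))$, where $\alpha(S)$ is Lvovski's invariant. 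So the statement to prove is equivalent to $\alpha(S) \le \alpha(C)$, where $\alpha(C) = \cork(\Phi_C) - 1$ by Lemma~\ref{l:wahl}.

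The key step is a restriction (hyperplane-section) comparison between $N_{S/\P^g}(-1)$ and $N_{C/\P^{g-1}}(-1)$. I would use the standard exact sequence relating the normal bundle of $S$ in $\P^g$, restricted to the hyperplane section $C$, with the normal bundle of $C$ in $\P^{g-1}$: from $0 \to N_{C/S} \to N_{C/\P^g} \to \restr{N_{S/\P^g}}{C} \to 0$ together with $N_{C/S} = \O_C(C) = \O_C(1) = \omega_C$ (adjunction on a $K3$), and from $0 \to N_{C/\P^{g-1}} \to N_{C/\P^g} \to \O_C(1) \to 0$. Twisting by $\O_C(-1) = \omega_C^\vee$ and also using the sequence $0 \to N_{S/\P^g}(-2) \to N_{S/\P^g}(-1) \to \restr{N_{S/\P^g}}{C}(-1) \to 0$, one gets a chain of inequalities on $h^0$. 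The point is that $h^0(N_{S/\P^g}(-2)) = 0$ — this is exactly Lemma~\ref{l:fighissimo} applied to $S \subset \P^g$ with $\Cliff(S,\O_S(1)) = \Cliff(C) > 2$ (the ideal of such a $K3$ is generated by quadrics with linear first syzygies, by \cite{saint-donat}) — so $h^0(N_{S/\P^g}(-1))$ injects into $h^0(\restr{N_{S/\P^g}}{C}(-1))$. Then, chasing through the two sequences on $C$ and using $h^0(\O_C) = 1$, $h^0(\omega_C^\vee)=0$, $h^1(\omega_C^\vee)$ bounded appropriately, and the vanishing $h^0(N_{C/\P^{g-1}}(-2)) = 0$ (again Lemma~\ref{l:fighissimo} on $C$, since $\Cliff(C)>2$), one arrives at $h^0(\restr{N_{S/\P^g}}{C}(-1)) \le h^0(N_{C/\P^{g-1}}(-1)) + (\text{correction of size } g+1 \text{ versus } g)$, which after subtracting the respective $\alpha$-shifts yields $\alpha(S) \le \alpha(C)$, i.e.\ the claimed inequality. (In the Fano case one replaces adjunction on the $K3$ by $N_{S/V} = \O_S(-K_V)|_S = \O_S(1)$, which holds since $S \in |-K_V|$, and uses Lemma~\ref{l:wahl1} again, now with $X = V$, to identify $h^0(N_{V/\P^{g+1}}(-1)) = h^1(T_V(-1)) + g + 2$.)

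The main obstacle will be bookkeeping the small cohomological corrections in the hyperplane-section sequences so that the "$+1$" in the statement comes out exactly right rather than a weaker "$\le \alpha(C)+\mathrm{const}$": one must be careful that the $H^1$ terms contributed by $\O_C(1)=\omega_C$, by $N_{C/S}=\omega_C$, and by the twisted structure sheaves either vanish or are absorbed cleanly (for instance $h^1(N_{C/\P^{g-1}}(-2))$ and $h^1(N_{S/\P^g}(-2))$ must be controlled, which again follows from the quadrics-and-linear-syzygies hypothesis via the Koszul-type resolution, cf.\ the proof sketch of Lemma~\ref{l:wahl} and \ref{p:identifications}). An alternative, perhaps cleaner, route that I would fall back on is to translate everything into Gaussian-map language via Lemmas~\ref{l:wahl} and \ref{l:wahl1}: $h^1(T_S(-1)) = \coker(\mu_{?})$-type data does not directly appear, but $h^1(T_S(-1))$ sits in a sequence built from $H^1(\restr{T_{\P^g}}{S}(-1)) = 0$ (equation \eqref{eq:h1}) and $H^1(N_{S/\P^g}(-1))$, and one can compare the latter with $\coker(\Phi_C)$ through the restriction $S \rightsquigarrow C$; the surjectivity/injectivity statements needed are precisely the ones packaged in Lemma~\ref{l:fighissimo}. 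Either way, the engine is Lemma~\ref{l:fighissimo} (vanishing of $H^0(N(-2))$ in the range $\Cliff>2$) combined with a single hyperplane-restriction diagram, and the only real work is the numerical chase, which I would not grind through here.
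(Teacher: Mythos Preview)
Your overall strategy is the paper's: use Lemma~\ref{l:wahl1} to rewrite $h^1(T_S(-1))$ as $h^0(N_{S/\P^g}(-1))-(g+1)$, use Lemma~\ref{l:wahl} to rewrite $\cork(\Phi_C)$ as $h^0(N_{C/\P^{g-1}}(-1))-g$, and compare the two via restriction to $C$ and the vanishing $H^0(N_{S/\P^g}(-2))=0$ from Lemma~\ref{l:fighissimo}. So the engine is right.

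Where you make life hard for yourself is in the ``hyperplane-section comparison''. You propose to juggle the two sequences $0\to N_{C/S}\to N_{C/\P^g}\to \restr{N_{S/\P^g}}{C}\to 0$ and $0\to N_{C/\P^{g-1}}\to N_{C/\P^g}\to \O_C(1)\to 0$ and then worry about stray $H^1$ terms and whether the ``$+1$'' survives. The paper bypasses all of this with a single clean identification: since $C=S\cap H$ is a transversal hyperplane section, one has directly
\[
N_{C/\P^{g-1}} \;\cong\; \restr{N_{S/\P^g}}{C}
\]
(the equations cutting out $S$ in $\P^g$ restrict to equations cutting out $C$ in $H\cong\P^{g-1}$). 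With this isomorphism in hand there is no diagram chase at all: the restriction sequence $0\to N_{S/\P^g}(-2)\to N_{S/\P^g}(-1)\to \restr{N_{S/\P^g}}{C}(-1)\to 0$ and the vanishing from Lemma~\ref{l:fighissimo} give $h^0(N_{S/\P^g}(-1))\le h^0(N_{C/\P^{g-1}}(-1))$, and the two Lemmas translate this into $h^1(T_S(-1))+g+1\le \cork(\Phi_C)+g$, which is exactly the claim. The ``bookkeeping'' you flag as the main obstacle simply disappears, and none of the extra vanishings you list ($h^1(N_{C/\P^{g-1}}(-2))$, $h^1(N_{S/\P^g}(-2))$, etc.) are needed.

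One small slip: $\alpha(C)=h^0(N_{C/\P^{g-1}}(-1))-g=\cork(\Phi_C)$, not $\cork(\Phi_C)-1$; correspondingly the target inequality is $\alpha(S)+1\le\alpha(C)$, not $\alpha(S)\le\alpha(C)$. This is exactly the ``$+1$'' you were worried about, and it comes for free from the difference between $\dim H^0(\O_S(1))=g+1$ and $\dim H^0(\omega_C)=g$ in the two instances of the exact sequences \eqref{w-exctsq} and \eqref{w-exctsq2}.
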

\vspace{\negvcorr}

\begin{proof} 
We prove only the first instance of the statement, the other one being
entirely similar.
The curve $C$ is the complete intersection of $S\subset
\P^ g$ with a hyperplane $H\cong \P^ {g-1}$, so one has
\begin{equation}\label{2eq:euler1}
N_{C/\P^ {g-1}}\cong \left. N_{S/\P^ g} \right|_C.
\end{equation}
By Lemma~\ref {l:fighissimo} one has 
$\H^0(N_{S/\P^ g}(-2))=0$, 
so one deduces from the twisted restriction exact sequence
% \begin{equation}\label{2eq:ex}
% 0\to  N_{S/\P^ g} (-2)\to  N_{S/\P^ g} (-1)\to \left. N_{S/\P^ g} \right|_C(-1)\to 0.
% \end{equation}
% and it follows 
that
\begin{equation}\label{2eq:norm2}
h^ 0(N_{S/\P^ g}(-1)) \leq
h^ 0(\left.N_{S/\P^ g} \right|_C(-1)).
\end{equation}
% Consider the twisted restriction exact sequence
% \begin{equation}\label{2eq:ex}
% 0\to  N_{S/\P^ g} (-2)\to  N_{S/\P^ g} (-1)\to \left. N_{S/\P^ g} \right|_C(-1)\to 0.
% \end{equation}
% By Lemma~\ref {l:fighissimo} one has 
% $H^0(N_{S/\P^ g}(-2))=0$, 
% and it follows that
% \begin{equation}\label{2eq:norm2}
% h^ 0(N_{S/\P^ g}(-1)) \leq
% h^ 0(\left.N_{S/\P^ g} \right|_C(-1)).
% \end{equation}
We may now conclude:
\begin{alignat*}{2}
\cork (\Phi_C)+g &= h^0(N_{C/\P^{g-1}}(-1)) 
&\qquad& \text{by Lemma~\ref{l:wahl}} \\
% &= h^0(N_{C/\P^{g}}(-1)) -1 
% && \text{by \eqref{2eq:split}} \\
&= h^0( \left. N_{S/\P^ g}(-1) \right|_C)
&& \text{by \eqref{2eq:euler1}} \\
&\geq h^0(N_{S/\P^ g}(-1))
&& \text{by \eqref{2eq:norm2}} \\
&= h^1(T_S(-1)) + g+1
&& \text{by Lemma~\ref{l:wahl1}.} 
\end{alignat*}\end{proof}

% \bigskip
% Similar to the curve case, the following proposition bounds the
% dimension of the kernel of the differential of $s_g$. Its proof is
% mutatis mutandis the same as that of Proposition~\ref{prop:ineq}
% above.

% \begin{proposition} \label{prop:ineq2} Let $(V,S) \in \FS_g$, and
% suppose that $\Cliff (S) >2$. Then
% \begin{equation}\label{eq:est2}
% h^1(T_V(-1))+1 \leq h^ 1(T_S(-1)).
% \end{equation}
% \end{proposition}

\section{A general bound on the corank of the Wahl map}
\label{S:bnd-cork}

In this Section we prove that
under our usual assumptions, a given canonical curve can be integrated
to a given $K3$ surface in only finitely many ways,
%(Proposition~\ref{pr:finite});
%see also Corollary~\ref{cor:finite2}.
% which says a curve can be realized as
% only finitely many divisors on a given $K3$ surface.
and use this to bound the corank of the Wahl maps of the curves that
sit on a $K3$ surface. %(Corollary~\ref{c:bnd-alpha}).
We first recall the two following results from \cite{epema}. 

% \subsection{Preliminaries on surfaces with canonical curve 
% section}
% \label{s:epema}
%We first recall the two following results from \cite [p.~iii] {epema}. 

\begin{theorem}[{\cite [p.~iii] {epema}}]
\label{thm:epema} 
Let $S$ be a non-degenerate projective surface in $\P^ g$,
having as a hyperplane section a smooth canonical curve 
$C \subset \P^{g-1}$ of genus $g \geq 3$.
Then only the following cases are possible:\\
\begin{inparaenum}
\item $S$ is a $K3$ surface with canonical singularities;\\ 
\item $S$ is a rational surface with a minimally elliptic
singularity, plus perhaps canonical singularities;\\
\item $S$ is a ruled surface over a curve of genus $q\geq 1$ with only
one singularity of \emph{genus} $q+1$, plus perhaps canonical
singularities;\\
\item $S$ is a ruled surface over a curve of genus $q=1$, with two
{simple elliptic singularities}, plus perhaps canonical
singularities.
\end{inparaenum}
\end{theorem}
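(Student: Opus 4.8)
\emph{Proof strategy (sketch).}
The first step is to normalise the situation by means of Theorem~\ref{t:cm}. Since the general hyperplane section $C$ of $S$ is smooth, $\Sing S$ is finite; and Theorem~\ref{t:cm} with $n=2$ says that $S$ is a normal, arithmetically Gorenstein surface with $\omega_S\cong\O_S$. In particular every singularity of $S$ is a Gorenstein surface singularity, arithmetic Cohen--Macaulayness gives $\H^1(S,\O_S)=0$, and Serre duality gives $h^2(\O_S)=h^0(\omega_S)=1$, so $\chi(\O_S)=2$. It thus suffices to classify normal projective surfaces with isolated singularities, $\omega_S\cong\O_S$ and $\H^1(\O_S)=0$, whose general hyperplane section in a suitable embedding is a smooth canonical curve of genus $g\geq 3$.

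The second step is to pass to the minimal desingularisation $\pi\colon\tilde S\to S$. The discrepancies at a minimal resolution are $\leq 0$ and $\pi^*\omega_S$ is trivial, so $K_{\tilde S}=\O_{\tilde S}(-Z)$ with $Z\geq 0$ supported on $\mathrm{Exc}(\pi)$. If some positive multiple of $K_{\tilde S}$ were linearly equivalent to an effective divisor, that divisor plus the corresponding multiple of $Z$ would be an effective principal divisor, hence zero, forcing $Z=0$. Hence the dichotomy: either $K_{\tilde S}=0$ and every singularity of $S$ is canonical, or $Z\neq 0$ and $\kappa(\tilde S)=-\infty$. In either case the Leray spectral sequence for $\pi$ (using $R^i\pi_*\O_{\tilde S}=0$ for $i\geq 2$ and $\H^1(\O_S)=0$) gives $\chi(\O_{\tilde S})=2-\sum_p p_g(S,p)$ with $p_g(S,p)=\dim(R^1\pi_*\O_{\tilde S})_p$; here $p_g=0$ exactly for the Du Val singularities, $p_g=1$ exactly for the minimally elliptic ones (Laufer), and every non-canonical Gorenstein surface singularity contributes at least $1$.

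If $K_{\tilde S}=0$ then $\tilde S$ has no $(-1)$-curve, so it is minimal with trivial canonical class, and $\chi(\O_{\tilde S})=2$ forces $q(\tilde S)=0$, whence $\tilde S$ is a $K3$ surface: case~(i). If $\kappa(\tilde S)=-\infty$, then $\tilde S$ is rational or birationally ruled over a smooth curve $B$ of genus $q\geq 0$, so $\chi(\O_{\tilde S})=1-q$ and $\sum_p p_g(S,p)=q+1$, whence $S$ has at most $q+1$ non-canonical singularities, each elliptic Gorenstein (Laufer--Reid) of ``genus'' $p_g$. When $q=0$ this leaves a single minimally elliptic singularity --- case~(ii). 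When $q\geq 1$, I would compose $\pi$ with the Albanese fibration $a\colon\tilde S\to B$: the proper transform of $C$ is disjoint from $\mathrm{Exc}(\pi)$ (the singular points of $S$ avoid the smooth section $C$) and dominates $B$, which forces the exceptional locus into the fibres of $a$ apart from the ``elliptic'' components, which must dominate $B$; going through the negative definite elliptic Gorenstein configurations of total genus $q+1$ compatible with a $\mathbf{P}^1$-fibration then leaves exactly one singularity of genus $q+1$, or --- only when $q=1$ --- two simple elliptic ones, i.e.\ cases~(iii) and (iv).

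The main obstacle is this last step. Reducing to $\omega_S\cong\O_S$ and separating the $K3$ case from the $\kappa=-\infty$ case are essentially formal once Theorem~\ref{t:cm} is granted, but in the ruled case one genuinely has to exclude every other configuration of elliptic singularities: that there is exactly one of them (two only for $q=1$), that it has the asserted genus, and that it is of simple elliptic rather than cusp type when $q=1$. This rests on a careful analysis of the exceptional locus relative to the ruling, using the constraint that $\Sing S$ lies off the canonical hyperplane section, and I would import it from \cite{epema} (resp.\ \cite{conte-murre} for the analogous threefold bookkeeping behind Theorem~\ref{t:cm}) rather than reproduce it.
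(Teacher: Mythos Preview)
The paper does not prove Theorem~\ref{thm:epema}: it is quoted verbatim from Epema's tract \cite{epema} (together with Proposition~\ref{prop:section}) as background input for the proof of Proposition~\ref{pr:finite}. There is therefore no argument in the paper to compare your sketch against.

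That said, your outline is essentially the standard route and close to Epema's own. The reduction via Theorem~\ref{t:cm} to a normal surface with $\omega_S\cong\O_S$, $\H^1(\O_S)=0$, $\chi(\O_S)=2$, the passage to the minimal resolution with $K_{\tilde S}=-Z$ ($Z\geq 0$ exceptional), the dichotomy ``$Z=0$ hence $K3$'' versus ``$\kappa(\tilde S)=-\infty$'', and the Leray count $\sum_p p_g(S,p)=q+1$ are all correct and constitute the formal skeleton of the proof.

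Two comments. First, a terminological slip: for $q\geq 2$ the singularity in case~(iii) has $p_g=q+1\geq 3$ and is \emph{not} elliptic in Laufer's sense (which requires $p_g=1$); the word ``genus'' in the statement simply means the geometric genus $p_g$, so your phrase ``each elliptic Gorenstein'' should be dropped. Second, as you yourself acknowledge, the substantive part is the endgame in the ruled case: ruling out all other partitions of $q+1$ among Gorenstein singular points compatible with a $\P^1$-fibration over $B$, and showing that for $q=1$ the two singularities are simple elliptic rather than cusps. Your Albanese sketch is the right opening move, but the exclusion argument genuinely requires the detailed analysis of the exceptional configurations relative to the ruling carried out in \cite{epema}; importing it, as you propose, is the honest thing to do.
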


\medskip\noindent
Surfaces of type
(ii)--(iv) are fake K3 surfaces; their 
Kodaira dimension  is $-\infty$. 
%
% Let us recall that  if $(S,x)$ is the germ of a complex surface singularity and $f: S'\to S$ is the minimal desingularization:\\
% \begin{inparaenum}
% \item [$\bullet$] $(S,x)$ is called \emph {good} if the scheme theoretical fibre of $f$ over $x$ has normal crossings, its components are all smooth and its dual graph has no cycles;\\
% \item [$\bullet$] the \emph{genus} of $(S,x)$ is, by definition, $\dim_\mathbb C(R^ 1f_*(\mathcal O_{S'}))_x$;\\
% \item [$\bullet$] $(S,x)$ is said to be \emph{minimally elliptic} if it is good and the scheme theoretical fibre of $f$ over $x$ has arithmetic genus 1;\\
% \item [$\bullet$]  $(S,x)$ is said to be \emph{simple elliptic} if it is minimal elliptic and the scheme theoretical fibre of $f$ over $x$ is irreducible.
% \end{inparaenum}
%

\begin{proposition}[{\cite [Theorem 2.1, p. 38] {epema}}]
\label{prop:section}
Assume we are in one of the cases (iii)--(iv) of
Theorem~\ref{thm:epema}. 
Let $\mu: S\to \Sigma$ be a minimal model of $S$;
it has a structure of $\P^1$-bundle $f: \Sigma\to D$, where $D$ is a
smooth curve of genus $q$. If the image of $C$ in $\Sigma$ is a
section of $f$, then $S$ is a cone over $C$. 
\end{proposition}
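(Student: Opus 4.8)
The plan is to show first that the ruling of $S$ consists of lines of $\P^g$, then to transport the hyperplane bundle to the geometrically ruled model $\Sigma=\P(E)\to D$ and identify it with $\O_\Sigma(\bar C)$ for the section $\bar C$ corresponding to $C$; the morphism attached to $|\O_\Sigma(\bar C)|$ will then realize $S$ as the cone over the canonical curve $C$. (This is essentially Epema's argument.)

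\emph{The ruling is by lines.} The $\P^1$-bundle structure $f\colon\Sigma\to D$ induces a ruling on $S$ whose general member $F$ maps isomorphically onto a general fibre $\bar F$ of $f$; since $\bar C:=\mu(C)$ is a section of $f$, we get $C\cdot F=\bar C\cdot\bar F=1$, and because $C=S\cap H$ is a hyperplane section, $\O_S(1)\cdot F=\O_S(C)\cdot F=C\cdot F=1$, so $F$ is a line of $\P^g$. Hence $S$ is swept out by a one-parameter family of lines meeting $C$ once each, and the ruling restricts on $C$ to a degree-one morphism $C\to D$, which is therefore an isomorphism; thus $g=q$. (In case (iv), $q=1$ would force $g=1$, contradicting $g\ge3$, so there the hypothesis is vacuous; we may assume we are in case (iii).)

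\emph{Transport to $\Sigma=\P(E)$.} Since $C$ is smooth, $\mult_p C\ge\mult_p S$ at any $p\in\Sing S$ lying on $C$, so $C$ avoids $\Sing S$; its strict transform $\bar C\subset\Sigma$ is then a section with $\bar C\cong C$, disjoint from the connected curve $\Gamma_0\subset\Sigma$ contracted to the elliptic singularity. Passing through a common resolution of $S$ and $\Sigma$ and using that $S$ is normal and non-degenerate, $\O_S(1)$ yields a line bundle $M$ on $\Sigma$ with $h^0(M)=h^0(\O_S(1))=g+1$, of fibre-degree $1$, with $M|_{\bar C}=\omega_C$ (as $C$ is canonical), and with $M\cdot\Gamma_0=0$ (the pullback of a Cartier divisor is trivial on a contracted curve). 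Since $\Pic\Sigma\cong\pi^*\Pic D\oplus\Z\,\bar C$, write $M=\O_\Sigma(\bar C)\otimes\pi^*B$; then $M\cdot\Gamma_0=0$ gives $\deg B=0$, while $M|_{\bar C}=\omega_C$ gives $\bar C^2=\deg\omega_D-\deg B=2g-2$ and $N_{\bar C/\Sigma}\otimes B=\omega_D$. From the sequence $0\to\O_D\to\pi_*\O_\Sigma(\bar C)\to N_{\bar C/\Sigma}\to0$ one gets $h^0(M)\le h^0(D,B)+h^0(D,\omega_D)=h^0(D,B)+g$; as $h^0(M)=g+1$ and $\deg B=0$, this forces $B\cong\O_D$, hence $M=\O_\Sigma(\bar C)$ and $N_{\bar C/\Sigma}=\omega_D$.

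\emph{The cone map, and conclusion.} The morphism $\phi\colon\Sigma\to\P^g$ attached to the complete system $|\O_\Sigma(\bar C)|$ restricts on $\bar C$ to the canonical embedding of $C$ (note $C$ is non-hyperelliptic, being a smooth curve in its canonical embedding); it is constant on $\Gamma_0$, since $\O_\Sigma(\bar C)|_{\Gamma_0}=\O_{\Gamma_0}$ because $\bar C\cap\Gamma_0=\emptyset$, so $\phi$ contracts $\Gamma_0$ to a point $v$; and it maps each ruling fibre isomorphically onto a line, which passes through $v$ because $\Gamma_0$ meets every fibre. Therefore $\phi(\Sigma)$ is the cone over $C\subset\P^{g-1}$ with vertex $v$. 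Finally, since $M$ carries the same linear system as $\O_S(1)$, the morphism $\phi$ factors the composite $\tilde S\to S\hookrightarrow\P^g$ (with $\tilde S$ the common resolution), so $\phi(\Sigma)=S$; hence $S$ is the cone over $C$. The delicate point of the argument is the middle step: reconciling the several birational models involved and verifying that $\Gamma_0$ meets every ruling fibre of $\Sigma$ while being disjoint from $\bar C$, and that $M$ is genuinely the pullback of $\O_S(1)$ (hence trivial on $\Gamma_0$); once this is secured, the degree count and the analysis of $|\O_\Sigma(\bar C)|$ are routine and the conclusion is immediate.
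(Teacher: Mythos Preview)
The paper does not prove this proposition: it is quoted from Epema's monograph and used as a black box in the proof of Proposition~\ref{pr:finite}. So there is no ``paper's own proof'' to compare against; your write-up is an attempted reconstruction of Epema's argument.

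Your strategy is sound and the first step (the ruling of $S$ is by lines, hence $C\cong D$ and case~(iv) is vacuous) is clean. The final step is also fine once the middle step is granted. The problem is that the middle step, which you yourself flag as ``delicate'', is not actually carried out. Two points are genuinely missing:

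\begin{itemize}
\item \emph{Existence of $M$ on $\Sigma$.} You assert that $\O_S(1)$ ``yields a line bundle $M$ on $\Sigma$'' via a common resolution $\tilde S$. But the pullback $\tilde L$ of $\O_S(1)$ to $\tilde S$ descends to $\Sigma$ only if it has degree zero on every $(-1)$-curve contracted by $\tilde S\to\Sigma$. Such a $(-1)$-curve lies in a reducible fibre of $\tilde S\to D$; if it is \emph{not} exceptional for $\tilde S\to S$ (and nothing you have said rules this out), its image in $S$ is a curve of positive $\O_S(1)$-degree, and $\tilde L$ does not descend. Without this, neither ``$h^0(M)=g+1$'' nor ``$M\cdot\Gamma_0=0$'' makes sense.
\item \emph{Definition and horizontality of $\Gamma_0$.} You speak of ``the connected curve $\Gamma_0\subset\Sigma$ contracted to the elliptic singularity'', but $\Sigma\dashrightarrow S$ need not be a morphism, so it is not clear what this curve is. Even granting a sensible definition, you need that $\Gamma_0$ meets every fibre of $f$; this is what forces all the ruling lines of $S$ through the singular point. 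The right reason is that the exceptional locus over the non-rational singularity cannot be purely vertical (a union of fibre components would be a configuration of rational curves and the singularity would be rational), hence it has a horizontal component dominating $D$; but you do not supply this argument.
\end{itemize}

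One clean way to repair the argument is to work on the minimal resolution $\tilde S$ throughout rather than on $\Sigma$: there $\tilde L$ is an honest line bundle with $h^0=g+1$, trivial on the exceptional divisor $E$ over $p_0$, and one shows directly (using the non-rationality of the singularity as above) that $E$ dominates $D$, so every ruling line of $S$ passes through $p_0$; since $S$ is then contained in the cone over $C$ with vertex $p_0$ and has the same degree, equality follows.
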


% \subsection{Finiteness of the modular map}

\paragraph{}
\label{p:setup-finite}
Let $C\subset \P^ {g-1}$ be a smooth canonical curve of genus
$g\geqslant 11$ with $\Cliff(C) >2$, 
set $r+1=\cork (\Phi_{C})$ and $\P^r=\P(\coker(\Phi_{C})^ \vee)$, and
assume $r \geq 0$.

Consider the flat family $p:\mathcal S\to \P^r$ constructed in
Proposition~\ref{prop:vb}. Note that no surface of this 
family is a cone over $C$. Suppose that the general member of this
family is a $K3$ surface, possibly with $ADE$ singularities. 
% and let $\Kcan_g$ be the moduli space of
% polarized $K3$ surfaces with $ADE$ singularities (see, \eg
% [Huybrechts, 5.1.4 p.~83]). 
Then we have the rational modular map
\[
s: \P^r \dasharrow \Kcan_g,
\]
whose indeterminacy locus $Z$ consists of the points $[v]\in \P^ r$
such that the corresponding extension $S_v$ of $C$ is a fake $K3$
surface.
So $s$ is defined on the dense Zariski open subset $U=\P^
r-Z$.

\begin{proposition}\label{pr:finite}
The morphism $\restr s U: U\to \Kcan_g$ has finite fibres.
\end{proposition}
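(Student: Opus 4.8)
The plan is to show that the fibres of $\restr s U$ are finite, and since $U$ is quasi-projective and $\restr s U$ is a morphism to a separated stack, finiteness on the image then follows by general nonsense (a quasi-finite morphism with the right properness/affineness input, or simply: a morphism from a variety with finite fibres that is moreover, as we will see, injective on a suitable open set). So the heart of the matter is: if $[v], [v'] \in U$ have $s([v]) = s([v'])$, i.e.\ the extensions $S_v$ and $S_{v'}$ are isomorphic $K3$ surfaces (possibly with $ADE$ singularities), then we want to control how many such $[v']$ there can be for fixed $[v]$.

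First I would set $S_0 := S_v \cong S_{v'}$ and record that by construction each $S_{v}$ ($v \neq 0$) contains a distinguished copy of $C$ as a hyperplane section — call them $C_v$ and $C_{v'}$ inside $S_0$ — with the property that the first-order neighbourhood of $C_v$ in $S_0$ realizes the ribbon $v$ (up to the ambiguities of \ref{p:inf-autom}, i.e.\ up to $\H^0(\omega_C)\dual$), and similarly for $v'$. Now I would invoke Proposition~\ref{pr:modmap-finite} (= Proposition~\ref{pr:finite}'s consequence stated as Corollary~\ref{cor:finite2}): on a polarized $K3$ surface of genus $g \geq 11$ with $\Cliff > 2$, there are only finitely many members of $|\O_{S_0}(C)|$ isomorphic to $C$. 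Wait — that is exactly the Proposition being bootstrapped here, so I cannot use it; instead I should use the cleaner input that the group of projective automorphisms of $\P^g$ preserving $S_0$ is finite (since $S_0$ is a $K3$, its automorphism group acts on $\NS$ and on $H^0(L)$ with finite image on the relevant piece — more precisely $\mathrm{Aut}(S_0, L)$ is finite because $L$ is ample and a polarized $K3$ has finite automorphism group), together with the fact that $|\O_{S_0}(C)|$ is a single $\P^g$ worth of curves and the isomorphism class of $C_v$ as an abstract curve is fixed. The key geometric point is: two points $[v] \neq [v']$ of $U$ with isomorphic extensions give, via the isomorphism $S_v \xrightarrow{\sim} S_{v'}$ composed with the canonical embeddings, a curve $C_{v'} \in |\O_{S_0}(C)|$ abstractly isomorphic to $C$; but by Remark~\ref{r:unicita} the ribbon data pins down $S_{v'}$ inside $\P^g$ up to $\PGL$ fixing $C$ pointwise, so the fibre of $\restr s U$ over $s([v])$ injects into the set of pairs (curve $C' \in |\O_{S_0}(C)|$ with $C' \cong C$, ribbon structure on $C'$ compatible with being a hyperplane section of $S_0$), modulo $\mathrm{Aut}(S_0)$.

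The technical crux is therefore to bound the number of $C' \in |\O_{S_0}(C)|$ with $C' \cong C$ — and this is where Theorem~\ref{thm:epema} and Proposition~\ref{prop:section} are presumably meant to be used in the ambient version rather than the fibre-of-$c_g$ version: the locus in $|\O_{S_0}(C)| \times (\text{something})$ of curves isomorphic to $C$ is a closed subvariety, and one must show it is $0$-dimensional. I expect the argument to run: if it were positive-dimensional, we would get a nontrivial family of embeddings of $C$ into $S_0$, hence (by spreading out and using that the ribbon/normal-bundle data varies) a surface extension of $C$ of Picard rank forcing it into one of the fake-$K3$ cases (ii)--(iv), contradicting that $[v] \in U$ where the extension is an actual $K3$. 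Concretely: a $1$-parameter family of copies of $C$ in $S_0$ would sweep out $S_0$ and exhibit $C$ as a section of a ruling after passing to a minimal model, and Proposition~\ref{prop:section} then says the extension is a cone, which is excluded since no member of $p:\mathcal S\to\P^r$ is a cone over $C$.

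Finally, to pass from "finite fibres" to "finite on its image" I would argue that $\restr s U$ is quasi-finite and that, after possibly shrinking, the graph closure arguments together with properness of $\Kcan_g$ (or directly: the fibres are finite and the map extends to a morphism on a projective variety compactifying $U$ by Theorem~\ref{t:cm}-type control of the total space, i.e.\ the family $\mathcal S$ has a flat limit) give that $\restr s U$ factors as an open immersion onto a locally closed substack followed by nothing — more carefully, Zariski's main theorem for the quasi-finite separated morphism $\restr s U$ gives a factorization through a finite morphism, which is exactly the assertion. The main obstacle I anticipate is the positive-dimensionality ruling-out step: making rigorous that a positive-dimensional family of copies of $C$ inside the $K3$ surface $S_0$ would force the original extension to degenerate to a cone or a fake $K3$, i.e.\ correctly translating between "many curves on $S_0$" and "bad extension of $C$", since $S_0$ is only one member of the family $\mathcal S$ and one must relate its internal geometry to the universal picture of \ref{p:univ-ext}.
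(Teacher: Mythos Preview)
Your proposal has a genuine gap, and it stems from confusing where the ruled-surface argument of Proposition~\ref{prop:section} is to be applied.

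You fix a point $[v] \in U$, set $S_0 = S_v$ (a genuine $K3$ surface, possibly with $ADE$ singularities), and then try to show that only finitely many $C' \in |\O_{S_0}(C)|$ are isomorphic to $C$; your proposed mechanism is that a $1$-parameter family of such copies would ``exhibit $C$ as a section of a ruling after passing to a minimal model'', and then Proposition~\ref{prop:section} forces $S_0$ to be a cone. But $S_0$ is a $K3$ surface: it has no ruling and its minimal model is itself, so Proposition~\ref{prop:section} (which concerns only the fake $K3$ cases (iii)--(iv) of Theorem~\ref{thm:epema}) simply does not apply. Your argument therefore never gets off the ground. You seem to sense this in your last paragraph, but you never resolve it; and indeed, bounding the copies of $C$ inside a fixed $K3$ is exactly Corollary~\ref{cor:finite2}, which the paper deduces \emph{from} the Proposition you are trying to prove.

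The paper's key idea is different and is the one you are missing: the map $s$ is defined by a linear system on $\P^r$, so any irreducible curve $\gamma \subset U$ contracted by $s$ has closure $\Gamma \subset \P^r$ meeting the indeterminacy locus $Z$. One then pulls back the family $p:\mathcal S \to \P^r$ to a disc $\D$ mapping to $\Gamma$ and centred at a point of $Z$, obtaining an isotrivial family over $\D \setminus \{0\}$ with general fibre a genuine $K3$ $S_1$ and \emph{central fibre a fake $K3$} $S_0$. After a base change (using finiteness of $\mathrm{Aut}(S_1,L)$) and semistable reduction, triviality over the punctured disc gives a birational map $\tilde{\mathcal S} \dashrightarrow \D \times S_1$; since the proper transform $\tilde S_0$ has Kodaira dimension $-\infty$ while $S_1$ is $K3$, $\tilde S_0$ is contracted, necessarily onto the constant section $C \subset S_1$. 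This forces $S_0$ to be ruled over $C$ with $C$ mapping to a section of the minimal model, and \emph{now} Proposition~\ref{prop:section} applies to $S_0$ (a fake $K3$) and says $S_0$ is a cone over $C$, contradicting that no fibre of $p$ is a cone. The degeneration to $Z$ is the whole point; you never leave $U$, and that is why your argument stalls.
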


\begin{proof} 
We argue by contradiction, and suppose there is an
irreducible curve $\gamma\subset U$ such that $s(\gamma)$ is a
point.
Let us first rule out the possibility that $s$ be defined eveywhere on
$\P^r=\P(\coker(\Phi_{C})^ \vee)$, \ie $Z = \emptyset$.
In this case, since $s$ contracts a curve it must be constant.
Then there would exist a
$K3$ surface $S$ containing a complete $1$-dimensional family of
curves all pairwise isomorphic. Since $\Pic(S)$ is discrete, all these
curves belong to the same linear equivalence class $[C] \in \Pic(S)$.
But since the discriminant locus in $|C|$ is a hypersurface, it is
impossible to have a complete positive-dimensional family of smooth
curves in $|C|$, and we have a contradiction.

We may thus assume that the locus of indeterminacy $Z$ of $s$ is
non-empty. 
Let $\Gamma$ be the Zariski closure of $\gamma$ in $\P^ r$.
The rational map $s$ is defined by a linear system
on $\P^r$ with base locus $Z$. The curve $\Gamma$ necessarily
intersects the divisors in this linear system, and since it is
contracted by $s$ the intersection must be contained in $Z$.
We may thus consider a point $\xi \in \Gamma \cap Z$.
Looking at the normalization of $\Gamma$ at $\xi$, we see
that there is an analytic morphism $ \nu: \D\to \Gamma$, where
$\D$ is the complex unit disc and $\nu(0)=\xi$. By pulling back $p:\mathcal
S\to \P^ r$ to $\D$, we find a flat family $p': \mathcal
S'\to \D$ which is isotrivial over $ \D-\{0\}$, 
with general fibre a $K3$ surface $S_1$,
and central fibre a fake $K3$ surface $S_0$. 
Since the automorphisms of $S_1$ as a polarised surface 
have finite order we may assume, up
to performing a finite base change, that $p'$ is actually trivial over
$\D-\{0\}$.
Moreover there is an inclusion:
\begin{equation*}\xymatrix@C=3mm@R=6mm {
\D\times C 
\ar[dr]_(.4){{\pr_1}} \ar@{}[r]|(.62){\subset}
& \raisebox{.5mm}{$\mathcal S'$} \ar[d]^(.45) {p'}\\
& \D 
}
\end{equation*} 

Now consider a semistable reduction $\tilde p: \tilde S\to
\D$ of $p': \mathcal S'\to \D$; we still have the
inclusion:
\begin{equation}\label{eq:inc}
\xymatrix@C=3mm@R=6mm {
\D\times C 
\ar[dr]_(.4){{\pr_1}} \ar@{}[r]|(.62){\subset}
& \raisebox{.9mm}{$\tilde{\mathcal S}$} \ar[d]^ {\tilde p}\\
& \D 
}
\end{equation} 
The central fibre of $\tilde p$ consists of
the proper transform $\tilde S_0$ of the central fibre $S_0$ of $p$,
plus possibly other components. The central fibre
$C$ of the trivial family $\D\times C \subset \tilde {\mathcal S}$
sits on $\tilde S_0$, and is entirely contained in the
smooth locus of the central fibre of $\tilde p$.

Since $p'$ is trivial over $\D-\{0\}$, so is $\tilde p$. 
This implies that there is a diagram:
\begin{equation*}\xymatrix@R=6mm @C=8mm { 
\raisebox {1.3mm} {$\tilde {\mathcal S}$}
\ar@{-->}[r]^(.4) \psi \ar[dr]_(.4){\tilde p} 
& \D \times S_1 %{\bar {\mathcal S}} 
\ar[d]^(.5) {\pr_1} %{\bar p} \ar@{}[r]|= & \D \times S_1 
\\
& \D 
}
\end{equation*}
where $\psi$ is a birational map contracting all components of the
central fibre of $\tilde p$ but one, and 
$S_1$ is the general fibre of $p'$.
% the central fibre of $\bar p$ is a $K3$ surface isomorphic to the
% general fibre of $p'$, $\tilde p$, and $\bar p$.
By composing the inclusion \eqref {eq:inc} with $\psi$, we still have
an inclusion:
\begin{equation}\label{eq:inc2} 
\xymatrix@C=3mm@R=6mm {
\raisebox {.4mm} {$\D\times C$}
\ar[dr]_(.4){{\pr_1}} \ar@{}[r]|(.5){\subset}
& \D \times S_1 \ar[d]^{\pr_1}
%\raisebox{.9mm}{$\bar {\mathcal S}$} 
%\ar[d]^(.45) {\bar p}
\\
& \D 
}
\end{equation}

We claim that $\tilde S_0$ has to be contracted by $\psi$. Indeed, 
being birational to $S_0$ which is a fake $K3$ surface, $\tilde S_0$
has Kodaira dimension $-\infty$, whereas $S_1$ is a genuine $K3$
surface. 
On the other hand, because of the inclusion \eqref {eq:inc2}, $\tilde
S_0$ has to be contracted to a curve isomorphic to $C$. This implies
that $\tilde S_0$ is a ruled surface over $C$, and so is
$S_0$. Consider a minimal model $\mu: S_0\to \Sigma$ of $S_0$ (and 
of $\tilde S_0$ as well). Then $\Sigma$ is a $\P^1$-bundle $f:
\Sigma\to C$, and the image of $C$ to $\Sigma$ via $\mu$ is a section
of $f: \Sigma\to C$. By Proposition \ref{prop:section}, $S_0$
must be a cone over $C$, a contradiction.
\end{proof}

% \subsection{Applications} %to the corank of the Wahl map}
% \label{s:bnd-cork}

\begin{corollary}
\label{c:bnd-alpha}
Let $C$ be a canonical curve of genus $g\geq 11$ in $\P ^{g-1}$, with
Clifford index $\Cliff(C) >2$. 
If $C$ is a hyperplane section of a $K3$ surface $S$
(possibly with $ADE$ singularities) in $\P^g$,
then %the following inequality holds:
$\cork (\Phi_C) \leq 20$.
\end{corollary}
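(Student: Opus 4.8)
The plan is to deduce Corollary~\ref{c:bnd-alpha} from Proposition~\ref{pr:finite} together with the identification $\cork(\Phi_C) = r+1$ and $\P^r = \P(\coker(\Phi_C)^\vee)$. First I would set $r+1 = \cork(\Phi_C)$ and invoke the hypothesis: $C$ sits on a $K3$ surface $S$ (possibly with $ADE$ singularities) in $\P^g$, so by Proposition~\ref{prop:vb} the flat family $p : \mathcal{S} \to \P^r$ has a $K3$ surface (namely $S$) among its fibres. Since being a surface with at worst canonical singularities and $K3$ minimal resolution is an open condition (this is what $\Kcan_g$ classifies), the general member of $p$ is a possibly singular $K3$ surface. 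Hence the rational modular map $s : \P^r \dashrightarrow \Kcan_g$ of \ref{p:setup-finite} is defined on a dense open $U = \P^r - Z$, and Proposition~\ref{pr:finite} applies: $\restr{s}{U}$ is finite onto its image.

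Next I would bound $r$ using this finiteness together with a dimension count. The map $\restr{s}{U} : U \to \Kcan_g$ is finite onto its image, so $\dim U = \dim \overline{s(U)} \leq \dim \Kcan_g = 19$ (the moduli of polarised $K3$ surfaces of genus $g$, with possibly canonical singularities, has dimension $19$, as recalled in Section~\ref{S:prelim}). But $\dim U = \dim \P^r = r$. Therefore $r \leq 19$, i.e.\ $\cork(\Phi_C) = r+1 \leq 20$, which is the assertion. The one point that needs a word of care is why $s$ is not constant on a positive-dimensional subvariety in a way that would defeat the count — but that is precisely the content of Proposition~\ref{pr:finite}, so no extra work is needed.

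The main obstacle I anticipate is making sure the hypotheses of Proposition~\ref{pr:finite} are genuinely met: one must know that the general fibre of $p : \mathcal{S} \to \P^r$ is a possibly singular $K3$ surface (not merely that one special fibre is), and one must know that $\Kcan_g$ has dimension $19$ so that the target of the modular map is $19$-dimensional. The first follows from openness of the $ADE$-singularity / $K3$-resolution condition in flat families of surfaces together with the hypothesis that $S$ itself is such a fibre; the second is the standard dimension of the moduli space of polarised $K3$ surfaces (see \cite[5.1.4]{huybrechts}), unaffected by allowing canonical singularities since these do not change the dimension. Once these are in place, the argument is the short combination of Proposition~\ref{pr:finite} with $\dim \P^r = r$ and $\dim \Kcan_g = 19$.

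\begin{proof}
Set $r+1 = \cork(\Phi_C)$ and $\P^r = \P(\coker(\Phi_C)^\vee)$. By
Proposition~\ref{prop:vb} there is a flat family $p : \mathcal{S} \to
\P^r$ of surface extensions of $C$, no member of which is a cone, and
by hypothesis the $K3$ surface $S$ occurs as one of its fibres. Since
having at worst $ADE$ singularities with $K3$ minimal desingularisation
is an open condition on the fibres of a flat family of surfaces, the
general member of $p$ is a $K3$ surface possibly with $ADE$
singularities. We are therefore in the situation of
\ref{p:setup-finite}: the rational modular map $s : \P^r \dasharrow
\Kcan_g$ is defined on a dense Zariski open subset $U = \P^r - Z$, and
by Proposition~\ref{pr:finite} the morphism $\restr{s}{U} : U \to
\Kcan_g$ is finite onto its image. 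Consequently
\[
r = \dim \P^r = \dim U = \dim \overline{s(U)} \leq \dim \Kcan_g = 19,
\]
so $\cork(\Phi_C) = r+1 \leq 20$.
\end{proof}
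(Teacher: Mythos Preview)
Your proof is correct and follows the same route as the paper's: use Proposition~\ref{pr:finite} to get a generically finite rational map $s:\P^r \dasharrow \Kcan_g$, then bound $r$ by $\dim \Kcan_g = 19$. You are in fact more explicit than the paper in checking that the standing assumption of \ref{p:setup-finite} (that the general fibre of $p$ is a possibly singular $K3$) is satisfied. One small imprecision: Proposition~\ref{prop:vb} alone does not tell you that the given surface $S$ occurs as a fibre of $p$; you need the unicity of Remark~\ref{r:unicita} (equivalently, the universality in Corollary~\ref{c:univ-ext}) to conclude that $S$, which is not a cone and hence defines a nonzero ribbon, is projectively equivalent to some $S_{[v]}$.
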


%\vspace{\negvcorr}

% The key to the inequality of Proposition~\ref{p:bnd-alpha} is the fact
% that a curve $C$ as in the Proposition may only appear finitely many
% times on a given $K3$ surface (this is our Remark~\ref{r:maxmod}).

\begin{proof}
By Proposition~\ref{pr:finite}, there is a rational map
$s: \P(\ker (\trsp \Phi_C)) \dasharrow \Kcan_g$
%(see paragraph~\ref{p:setup-finite}),
which is generically finite on its image. 
Therefore
$\cork (\Phi_C) -1 
\leq \dim (\Kcan_g) = 19$.
\end{proof}

\begin{corollary}[(Proposition~\ref{pr:modmap-finite})]
\label{cor:finite2}
Let $(S,C)\in \KCcan_g$ with $g\geq 11$ and $\Cliff(C)>2$.
There are only finitely many members $C'$ of $|\O_S(C)|$ that are
isomorphic to $C$.
\end{corollary}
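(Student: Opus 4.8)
The plan is to deduce the statement from Proposition~\ref{pr:finite}, applied to the curve $C$ itself. Embed $S$ in $\P^g = |\O_S(C)|\dual$; then $\O_S(C)=\O_S(1)$, every $C' \in |\O_S(C)|$ is a hyperplane section of $S$, and since the Gorenstein surface $S$ has trivial canonical sheaf, adjunction gives $\omega_{C'} = \O_S(1)|_{C'}$, so that a smooth $C' \in |\O_S(C)|$ is a canonical curve spanning a hyperplane $\vect{C'}\cong\P^{g-1}$. Fix a canonical model $C_0\subset\P^{g-1}$ of $C$, put $\P^r = \P(\ker(\trsp\Phi_C))$ (so $r+1=\cork(\Phi_C)$), and let $p:\mathcal S\to\P^r$ be the family of surface extensions of $C_0$ furnished by Proposition~\ref{prop:vb}. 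Choosing coordinates on $\P^g$ identifying $S\cap\P^{g-1}$ with $C_0$, the surface $S$ is a surface extension of $C_0$ that is not a cone (its Kodaira dimension is $0$, while a cone is ruled), hence $S = S_{[v_0]}$ for a unique $[v_0]\in\P^r$; in particular $r\geq 0$. Moreover, since $S$ is a possibly singular $K3$ surface and, by Epema's Theorem~\ref{thm:epema}, the only other type a member of $p$ can have is a fake $K3$ surface, of Kodaira dimension $-\infty$, the general member of $p$ is again a possibly singular $K3$ surface. We are thus in the situation of~\ref{p:setup-finite}, and Proposition~\ref{pr:finite} yields that the modular map $s$ is finite onto its image on a dense open $U\ni[v_0]$; in particular $s^{-1}([S])$ is finite.

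I would then attach to each smooth $C'\in|\O_S(C)|$ with $C'\cong C$ a point of $s^{-1}([S])$. Pick an isomorphism $\phi:C\xrightarrow{\sim}C'$; it induces a projective isomorphism $\P^{g-1}\xrightarrow{\sim}\vect{C'}$ carrying $C_0$ onto $C'$, which extends to a projective transformation $\widetilde\phi$ of $\P^g$. Then $\widetilde\phi^{-1}(S)$ is a surface in $\P^g$, isomorphic to $S$ (hence not a cone) and meeting $\P^{g-1}$ exactly in $\widetilde\phi^{-1}(C')=C_0$, so by the unicity statement of Remark~\ref{r:unicita} it equals $S_v$ for a unique $[v]\in\P^r$, up to a projectivity of $\P^g$ pointwise fixing $\P^{g-1}$. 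Since $S_v\cong S$ we have $[v]\in s^{-1}([S])\subseteq U$; and, crucially, the isomorphism class in $\KCcan_g$ of the pair $(S,C') = \widetilde\phi\cdot(\widetilde\phi^{-1}(S),\,C_0) \cong (S_v,C_0)$ depends only on $[v]$, because a projectivity pointwise fixing $\P^{g-1}$ fixes $C_0$ and so does not alter the isomorphism class of $(S_v,C_0)$. Therefore the set of isomorphism classes $\{\,[(S,C')] : C'\in|\O_S(C)|\ \text{smooth},\ C'\cong C\,\}$ is contained in the finite set $\{\,[(S_v,C_0)] : [v]\in s^{-1}([S])\,\}$, so only finitely many such isomorphism classes occur.

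Finally I would bound the number of members $C'$ realising a fixed isomorphism class. If $C',C''\in|\O_S(C)|$ give isomorphic pairs, there is $\beta\in\mathrm{Aut}(S)$ with $\beta(C')=C''$; as $C'\sim C''$ on $S$, such a $\beta$ preserves $\O_S(1)$ and hence is induced by a projective transformation of $\P^g$ preserving $S$. Thus the members $C'$ of a given isomorphism class form a single orbit of the group $G := \mathrm{Aut}(S\subset\P^g)$, a subgroup of $\PGL_{g+1}$, acting on $|\O_S(C)|=\P^g$; and $G$ is finite, being a $0$-dimensional algebraic subgroup of $\PGL_{g+1}$, since a positive-dimensional one preserving the non-degenerate $S$ would induce a nonzero global vector field on $S$, whereas $\H^0(\widetilde S,T_{\widetilde S})=0$ for the minimal desingularization $\widetilde S$, which is a $K3$ surface. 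Combining this with the previous paragraph proves the Corollary. The delicate step is the second paragraph: one must keep track of the various identifications (the choice of $\phi$, its extension $\widetilde\phi$, and the pointwise stabilizer of $\P^{g-1}$ appearing in Remark~\ref{r:unicita}) carefully enough to conclude that it is the isomorphism class of the \emph{pair} $(S,C')$, not merely that of $S$, which is controlled by the finite set $s^{-1}([S])$; the finiteness of $G$ then lets one descend from pairs to individual curves.
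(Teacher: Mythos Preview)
Your proof is correct and follows essentially the same strategy as the paper: both arguments use Proposition~\ref{pr:finite} to see that only finitely many ribbon classes $[v]\in\P(\ker(\trsp\Phi_C))$ give a surface $S_v$ isomorphic to $S$, then invoke the unicity of Remark~\ref{r:unicita} together with the finiteness of the projective automorphism group of $S$ to conclude. The paper packages this as a short contradiction argument (infinitely many $C_i$ would force infinitely many projective automorphisms of $S$), whereas you give a direct finite count via isomorphism classes of pairs $(S,C')$; the content is the same, and your version makes the bookkeeping with $\widetilde\phi$ and the pointwise stabiliser of $\P^{g-1}$ more explicit.
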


\begin{proof}
Assume by contradiction that there is an infinite family $(C_i)$ of
curves isomorphic to $C$ in $|\O_S(C)|$.
By Proposition~\ref{pr:finite}, we may furthermore assume that the
curves $C_i$ all have the same ribbon in $S$.
We consider the pair $(S,C)$ embedded in $\P^g$. Taking $C$ as a
common canonical model for all the curves $C_i$, we obtain a
family $(S_i)$ of surfaces in $\P^g$, such that each $S_i$ is the
image of $S$ by a projectivity of $\P^g$ fixing $C$.
By unicity of the integration of ribbons, see Remark~\ref{r:unicita},
we must have $S_i=S$ for all $i$, and it follows that $S$ has
infinitely many projective automorphisms, a contradiction.
\end{proof}

\begin{remark}
\label{r:corr-sbagl}
It is claimed in \cite[Proposition~1.2 and Corrigendum]{ck14} that for
a smooth curve $C$ sitting on a $K3$ surface $S$, one has
$\H^0(C,\restr {T_S} C)=0$; this would imply the injectivity of the
coboundary map
\[
\partial: \H^0(C,N_{C/S}) \to \H^1(C,T_C)
\]
induced by the conormal exact sequence of $C$ in $S$.
\par
Let $|\O_S(C)| ^\circ$ be the Zariski open subset of $|\O_S(C)|$
parametrizing smooth members of the linear system, and 
$c: |\O_S(C)| ^\circ \to \M_g$ be the morphism mapping
a smooth member $C'$ of
$|\O_S(C)|$ to its modulus in $\M_g$.
Since $\partial$ is the differential of $c$, the injectivity of
$\partial$ would be a stronger result than 
Corollary~\ref{cor:finite2}.
\par
However, there exist smooth curves $C$ sitting on $K3$ surfaces $S$
for which the conormal exact sequence is split \cite[Proposition~3 et
Remarque~4]{beauville-merindol}, see also Remark~\ref{r:unic-BM}.
For such pairs $(C,S)$, the boundary map $\partial$ is downright
zero. This shows that there is a problem with the claim of
\cite[Proposition~1.2 and Corrigendum]{ck14}.
This problem does not affect the results of [ibid.]. 
\par
Note that for a pair $(C,S)$ with split conormal sequence as above,
the assumptions of Proposition~\ref{pr:finite}, described in
\ref{p:setup-finite},
%Remark~\ref{r:maxmod}
are not verified: one has $\Cliff (C) = 2$, as $C$
carries either a $g^1_4$ or a $g^2_6$
\cite[Remarque~4]{beauville-merindol}, see also Remark~\ref{r:unic-BM}. 
\end{remark}

\section{Plane curves with ordinary singularities}
\label{S:plane}

In this Section we construct an extension of plane curves
with $a \leq 9$ ordinary singularities to an
$(11-a)$-dimensional variety, 
and thus give a lower bound on the coranks of their respective Gauss
maps.
The construction proposed in the following proposition is not new,
see, \eg \cite{epema}.

\begin{proposition}
\label{pr:plane-curves}
Let $C \subset \P^2$ be an integral curve with $a \leq 9$
singular points in general position, such that a simple blow-up of
$\P^2$ at
these $a$ points resolves the singularities of $C$
(\eg $C$ has $a$ ordinary singular points in general position and no other
singularities). Assume moreover that $C$ has genus $g \geq 3$. 
There is a family of dimension $9-a$ of mutually non-isomorphic
surfaces in $\P^g$ having the canonical image of the resolution of $C$
as a hyperplane section.
\end{proposition}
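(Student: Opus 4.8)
The key geometric construction is to extend $C \subset \P^2$ to a surface in $\P^g$ using an anticanonically-embedded weak Del Pezzo surface as a "universal" target. Let $\pi \colon X \to \P^2$ be the blow-up at the $a$ general points $p_1,\dots,p_a$ resolving $C$, so that the proper transform $\tilde C \subset X$ is smooth of genus $g$, and write $C \in |dH|$ in $\P^2$ with multiplicity $\mu_i$ at $p_i$, so $\tilde C \in |dH - \sum \mu_i E_i|$ on $X$. First I would observe that, by adjunction on $X$, one has $\omega_{\tilde C} \cong \O_{\tilde C}(\tilde C + K_X)$, so the canonical embedding of $\tilde C$ is cut out by the restriction to $\tilde C$ of the linear system $|\tilde C + K_X|$ on $X$. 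The point is then to realize $\tilde C + K_X$ as a hyperplane class in a suitable embedding of a surface: since $a \le 9$, the anticanonical class $-K_X = 3H - \sum E_i$ is nef and big (a weak Del Pezzo surface, as the $p_i$ are general), and $(\tilde C + K_X) - (- K_X) = \tilde C + 2K_X$; comparing with $\tilde C$ one checks $h^0(X, \tilde C + K_X) = g$, so $|\tilde C + K_X|$ maps $X$ into $\P^{g-1}$ with $\tilde C$ going to its canonical model.

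\textbf{Producing the extension.} To get a surface in $\P^g$ rather than $\P^{g-1}$, I would instead work on the total space of a pencil: take a second general member $C' \in |dH|$ through the same $a$ points and consider, on $X$, the rational map given by $|\tilde C + K_X| \oplus (\text{one extra section})$, or more cleanly, blow up $X$ along the base locus so as to obtain a surface $Y$ with a morphism to $\P^g$ whose hyperplane section through the appropriate $\P^{g-1}$ is the canonical model of $\tilde C$. Concretely, the anticanonical model of a weak Del Pezzo surface of degree $9-a$ sits in $\P^{9-a}$; gluing together the cone-like construction over $\tilde C$ with these anticanonical models realizes $\tilde C$'s canonical model as a linear section of a single variety, exactly as in Theorem~\ref{t:cm} and Corollary~\ref{c:univ-ext}. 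Each choice of hyperplane in $\P^g$ through the $\P^{g-1}$ containing $\tilde C$ cuts out a surface extension; the family of these is parametrized by a projective space whose dimension is controlled by $h^0$ of the relevant twisted normal bundle, and by the Del Pezzo degree $9-a$. I would then argue that the generic member of this family is a (possibly singular) rational surface with a minimally elliptic singularity — type (ii) in Theorem~\ref{thm:epema} — obtained by contracting $-K_X$-related curves, and that distinct members are non-isomorphic because they carry non-proportional ribbons over $\tilde C$, hence induce distinct infinitesimal neighbourhoods (as in the proof of Corollary~\ref{cor:palese-vero}).

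\textbf{The non-isomorphism and dimension count.} For the "mutually non-isomorphic" and "dimension $9-a$" claims, I would compute the dimension of $\P(\ker(\trsp\Phi_{\tilde C}))$ contributed by this construction: the surface extensions constructed arise from the $(9-a)$-dimensional family of anticanonical-type completions, and two such surfaces $S, S'$ sharing the same canonical model of $\tilde C$ as hyperplane section but corresponding to different points of the parameter space have non-isomorphic first-order neighbourhoods of $\tilde C$, so no projectivity of $\P^g$ fixing $\tilde C$ carries one to the other, and (by unicity of integration of ribbons, Remark~\ref{r:unicita}) they are genuinely non-isomorphic as abstract surfaces. Summing up: the parameter space has dimension $9-a$, matching the count $\dim(\text{weak dP of degree } 9-a \text{ moduli-with-marked-section})$ minus the automorphisms that fix $\tilde C$.

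\textbf{Main obstacle.} The delicate point is the non-isomorphism statement for the \emph{whole} $(9-a)$-parameter family, not just generic pairs: one must rule out that the family factors through a lower-dimensional moduli of surfaces with large automorphism groups acting on the embeddings. I expect to handle this via the ribbon dictionary — the map from the parameter space to $\P(\ker(\trsp\Phi_{\tilde C}))$ recording the first-order neighbourhood of $\tilde C$ is injective because it is essentially linear (the extension equations \eqref{eq:eqsurf} depend on $v$ through $\bef_v$ linearly and $\bh_v$ quadratically, and the ribbon isomorphism class only sees $\bef_v$ modulo $\H^0(\omega_{\tilde C})\dual$) — combined with the fact, from Remark~\ref{r:unicita} and Lemma~\ref{l:fighissimo} (applicable since $\Cliff(\tilde C) > 2$ is \emph{not} assumed here, so one needs instead the explicit geometry of the Del Pezzo construction), that distinct ribbon classes force distinct surfaces. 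A careful verification that $h^0(X, \tilde C + K_X) = g$ and that the construction genuinely lands in $\P^g$ (rather than a smaller space) is the other routine-but-essential check.
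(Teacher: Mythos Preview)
Your proposal has two genuine gaps.

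\textbf{The construction never reaches $\P^g$.} Blowing up only at the $a$ singular points gives a weak Del Pezzo $X$ on which $|\tilde C + K_X|$ has dimension $g-1$, so it maps $X$ onto (a surface dominating) the canonical model of $\tilde C$ inside $\P^{g-1}$, not into $\P^g$. Your suggestions for gaining the missing dimension (``one extra section'', ``blow up along a base locus'', ``glue with the anticanonical model'') are not carried out and, as stated, do not produce a $(9-a)$-dimensional family of surfaces in $\P^g$ all containing the canonical $\tilde C$ as a hyperplane section. The paper's key move, which you are missing, is to choose a smooth cubic $T$ through $p_1,\dots,p_a$ and blow up $\P^2$ at \emph{all} the intersection points of $C$ and $T$ (not just the singular ones). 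On the resulting surface $\tilde\P_T$ the proper transform $\hat T$ is disjoint from $C_T$ and is an anticanonical divisor, so $|C_T|=|K_{\tilde\P_T}+C_T+\hat T|$ has dimension $g$ and restricts to the canonical series on $C_T$; its image $S_T\subset\P^g$ is the desired extension, with $\hat T$ contracted to an elliptic singularity. The $(9-a)$-dimensional family is simply the family of cubics $T$ through $p_1,\dots,p_a$.

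\textbf{The non-isomorphism argument invokes machinery that is unavailable.} You appeal to unicity of integration of ribbons (Remark~\ref{r:unicita}) to separate the surfaces, but that result requires $g\geq 11$ and $\Cliff(\tilde C)>2$, whereas the proposition only assumes $g\geq 3$ and makes no Clifford-index hypothesis; you even note this yourself. The paper instead gives a direct geometric argument (Lemma~\ref{l:fake-distinct}): for distinct cubics $T,T'$, the surfaces $S_T$ and $S_{T'}$ carry $h-a$ lines coming from the simple base points $p_{a+1},\dots,p_h$, and these configurations of lines meet $\tilde C$ in distinct members of the base-point-free $g^{9-a}_{h-a}$ cut out by the cubics through $p_1,\dots,p_a$, so an isomorphism $S_T\cong S_{T'}$ would force $T=T'$. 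No ribbon theory is needed.
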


\begin{proof}
Let $C \subset \P^2$ be an integral curve of degree $d$ satisfying the
assumptions of the Proposition;
we call
$p_1,\ldots,p_a$ its singular points, and $m_1,\ldots,m_a >1$ the respective
multiplicities of $C$ at these points.
Let $T$ be a
smooth cubic passing through $p_1,\ldots,p_a$.
We call $p_{a+1},\ldots,p_h$ the intersection points,
possibly infinitely near, of $T$ and $C$
off $p_1,\ldots,p_a$, and set $m_{a+1}=\cdots = m_h =1$, so that
$\sum _{i=1}^h m_i = 3d$.
\par
Consider the blow-up $\sigma_T: \tilde \P_T \to \P^2$ at all the
intersection points $p_1,\ldots,p_h$ of $T$ and $C$, and call $E_i$
the exceptional divisor over the point $p_i$
(note that it is a chain of reduced rational curves, such that
$E_i^2=-1$). 
The proper transform $C_T$ of $C$ is smooth by assumption, and
disjoint from the proper transform $\hat T$ of $T$.
The curve $\hat T$ is an anticanonical divisor on $\tilde \P_T$.
Let $H$ be the line class on $\P^2$, and consider the linear system
\begin{equation}
\label{eq:ls-plane}
\textstyle
\bigl| C_T \bigr| =
\bigl| d\cdot \sigma_T^* H - \sum _{i=1}^h m_i E_i \bigr| =
\bigl| (d-3)\sigma_T^* H - \sum _{i=1}^h (m_i-1) E_i + \hat T \bigr|
= \bigl| K_{\tilde \P_T}+C_T + \hat T \bigr |.
\end{equation}
It restricts to the complete canonical series on $C_T$,
and defines a birational map $\phi_T: \tilde \P_T \dashrightarrow
\P^g$.
It follows that the image surface $S_T=\phi_T(\tilde \P_T)$ is an
extension of the canonical model of the resolution of $C$.
The curve $\hat T$ is contracted to an elliptic
singularity by $\phi _T$.

Now the cubic curves passing through $p_1,\ldots,p_a$ form a
linear system of dimension $9-a$, and the generic such cubic
is smooth. The Proposition therefore follows from the
fact that two different choices of $T$ give two non-isomorphic
surfaces $S_T$, which is the content of Lemma~\ref{l:fake-distinct}
below.
\end{proof}

\begin{lemma}
\label{l:fake-distinct}
Maintain the notation of the proof of
Proposition~\ref{pr:plane-curves}, and let $T'$ be another cubic
satisfying the same assumptions as $T$. 
The surfaces $S_T$ and $S_{T'}$ are not isomorphic.
\end{lemma}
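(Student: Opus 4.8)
The surfaces $S_T$ and $S_{T'}$ both contain the canonical model $\bar C$ of the resolution of $C$ as a hyperplane section, and each is the image of the rational surface $\tilde\P_T$ (resp.\ $\tilde\P_{T'}$) under a birational map contracting exactly the anticanonical curve $\hat T$ (resp.\ $\hat{T'}$) to an elliptic singularity. The plan is to show that an isomorphism $S_T \xrightarrow{\sim} S_{T'}$ would force $T = T'$ as curves in $\P^2$. The key is that the anticanonical cubic $T$ can be recovered intrinsically from the pair $(S_T, \bar C)$: first as the unique non-rational singular point of $S_T$, and then by blowing up, as a curve on the minimal resolution whose position relative to $\bar C$ reconstructs the linear system in \eqref{eq:ls-plane} and hence the original plane model.

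First I would observe that an isomorphism $u\colon S_T \to S_{T'}$ carrying the hyperplane section $\bar C$ to $\bar C$ (up to projectivities of $\P^g$ fixing $\bar C$; since $S_T$ has $\bar C$ as a hyperplane section and any abstract isomorphism of extensions fixing the curve is realized projectively, this costs nothing) induces an isomorphism of the minimal resolutions $\tilde\P_T \cong \tilde\P_{T'}$ compatible with $C_T \leftrightarrow C_{T'}$ and with the exceptional locus over the elliptic singularity, i.e.\ with $\hat T \leftrightarrow \hat{T'}$. Indeed $S_T$ has a unique non-canonical (minimally elliptic) singularity by Theorem~\ref{thm:epema}(ii), so $u$ matches these singularities and lifts to the minimal resolutions. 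Under this lift, $K_{\tilde\P_T}$ corresponds to $K_{\tilde\P_{T'}}$, $C_T$ to $C_{T'}$, and, reading off \eqref{eq:ls-plane}, $\hat T = -K_{\tilde\P_T} \mapsto -K_{\tilde\P_{T'}} = \hat{T'}$.

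Next I would run the contraction $\tilde\P_T \to \P^2$ in terms of data preserved by the isomorphism. The curve $C_T$ has class $d\sigma_T^*H - \sum m_i E_i$; the $(-1)$-curves contracted by $\sigma_T$ are (the last components of) the chains $E_i$. The isomorphism $\tilde\P_T\cong\tilde\P_{T'}$ identifies $C_T$ with $C_{T'}$, hence identifies their degrees $d$ and — via the intersection numbers $C_T\cdot E_i = m_i$ together with the incidence pattern of the $E_i$ — the collections of multiplicities and of (possibly infinitely near) base points. Blowing down a compatible choice of $(-1)$-curves on both sides, I get an isomorphism $\P^2 \xrightarrow{\sim} \P^2$, i.e.\ an element of $\PGL_3$, carrying $C$ to $C$ and the cubic $T$ to $T'$. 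Finally, since $C$ is an integral plane curve of degree $d$ with $g\geq 3$ hence $d\geq 4$, an automorphism of $\P^2$ fixing $C$ (as a subvariety) is the identity: the $a\leq 9$ singular points in general position together with their multiplicities rigidify any linear automorphism (or, more simply, $\mathrm{Aut}(\P^2, C)$ is finite and its action on the singular locus, being general position, forces triviality). Therefore $T = \mathrm{id}(T) = T'$.

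The main obstacle I expect is the bookkeeping in the middle step: making precise that the abstract isomorphism of minimal resolutions actually descends to $\P^2$, i.e.\ that it carries the specific blown-down $(-1)$-curves of $\sigma_T$ to those of $\sigma_{T'}$ rather than to some other $(-1)$-configuration on the same surface. One clean way around this is to avoid choosing the contraction altogether: the pencil (or linear system) cut out on $\tilde\P_T$ by the proper transforms of lines in $\P^2$ is $|\sigma_T^*H| = |\tfrac13(C_T - K_{\tilde\P_T} + K_{\tilde\P_T})|$ — more usefully, $\sigma_T^*H = \tfrac1{d}(C_T + \sum m_i E_i)$, and $\sum m_i E_i$ is the part of $C_T + (\text{exceptional divisor}) $ supported on the locus contracted over the plane — so $|\sigma_T^*H|$ is determined by $C_T$ and the exceptional configuration, all of which the isomorphism preserves. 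Then $|\sigma_T^*H|$ maps $\tilde\P_T$ to $\P^2$ recovering $C\subset\P^2$ and $T\subset\P^2$ canonically, and the isomorphism $S_T\cong S_{T'}$ yields a commuting square forcing $T=T'$. I would present the argument in this intrinsic form to sidestep the configuration-matching subtlety.
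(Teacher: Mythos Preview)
Your approach is genuinely different from the paper's, and it has a real gap in the descent step.

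The paper's argument is much shorter and more direct. It observes that the cubics through $p_1,\dots,p_a$ cut a base-point-free $g^{9-a}_{h-a}$ on $\bar C$, and that the $h-a$ exceptional curves $E_{a+1},\dots,E_h$ (over the simple points of $T\cap C$) map to lines on $S_T$. Each such line meets $\bar C$ in a single point, namely the point of $\bar C$ lying over the corresponding $p_i$. Thus the configuration of lines on $S_T$ records precisely the member of the $g^{9-a}_{h-a}$ cut by $T$. An isomorphism $S_T\cong S_{T'}$ matches these line configurations, hence $T$ and $T'$ cut the same divisor of the $g^{9-a}_{h-a}$ on $\bar C$; since $d>3$ the restriction map from plane cubics to divisors on $C$ is injective, so $T=T'$.

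Your strategy, by contrast, tries to lift to the minimal resolutions and then descend to an automorphism of $\P^2$. The gap is exactly where you suspect it: there is no reason the isomorphism $\tilde\P_T\cong\tilde\P_{T'}$ should carry the $\sigma_T$-exceptional configuration to the $\sigma_{T'}$-exceptional one. Your proposed fix --- recovering $|\sigma_T^*H|$ from ``$C_T$ and the exceptional configuration'' --- is circular: the formula $\sigma_T^*H=\tfrac1d(C_T+\sum m_iE_i)$ presupposes knowledge of the $E_i$, and the phrase ``the locus contracted over the plane'' already assumes the choice of $\sigma_T$. A priori the composite $\sigma_{T'}\circ f\circ\sigma_T^{-1}$ is only a Cremona transformation of $\P^2$, and you have not excluded this.

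A secondary issue: your claim that an automorphism of $\P^2$ preserving $C$ as a subvariety is the identity is false in general (smooth plane curves can have nontrivial projective automorphisms; your appeal to $\leq 9$ singular points does not help when $a\leq 3$, and fails outright when $a=0$). This would be salvageable --- since the isomorphism restricts to the identity on $\bar C$, a genuine descent would fix $C$ \emph{pointwise}, forcing the identity --- but only once the descent itself is established.

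The paper sidesteps all of this by never leaving $S_T$: it reads $T$ directly off the incidence of the lines with the fixed hyperplane section $\bar C$.
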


\begin{proof}
The cubics through $p_1,\ldots,p_a$ cut a
base-point-free
$g^ {9-a}_{h-a}$ on the normalisation of $C$.
On the other hand, $S_T$ and $S_{T'}$ each have $h-a$ lines,
corresponding respectively to the simple base points of
the linear systems $\sigma_{T*} |C_T|$ and $\sigma_{T'*} |C_{T'}|$ on the
plane. 
\par
If $S_T$ and $S_{T'}$ are isomorphic, the two elliptic
curves $T$ and $T'$ are isomorphic as well.
Also, the isomorphism $S_T \cong S_{T'}$ sends the aforementioned
lines on $S_T$ to their counterparts on $S_{T'}$.
This implies that $T$ and $T'$ cut out the same member of the 
$g^ {9-a}_{h-a}$ on the normalisation of $C$, hence coincide.
\end{proof}

\bigskip
Note that if $h \geq 19$ (recall that $h$ is the number of points
in the set-theoretic intersection $C \cap T$),
the surfaces $S_T$ are neither $K3$ surfaces nor limits of
such, because in this case the curve $\hat T$ is contracted by
$\phi_T$ to an elliptic singularity which is not smoothable;
see \cite{abs1} and the references therein for more details.

\begin{corollary}
\label{c:wahl-conj}
Let $C \subset \P^2$ be an integral curve of geometric genus $g \geq
11$, with $a \leq 9$ singular points in general position, such that a
simple blow-up of $\P^2$ at these $a$ points resolves the
singularities of $C$.
% \eg $C$ has $a$ ordinary singular points in
% general position and no other singularities).
Let $\bar C$ be the normalization of $C$.
% and assume that it has Clifford index $\Cliff (C)>2$. 
One has
\begin{equation}
\label{ineq-plane}
\cork (\Phi_{\bar C}) \geq 10 - a.
\end{equation}
\end{corollary}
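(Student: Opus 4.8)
The plan is to deduce Corollary~\ref{c:wahl-conj} directly from Proposition~\ref{pr:plane-curves} via Lvovski's Theorem~\ref{thm:lv}, using the translation between the corank of the Wahl map and the Lvovski invariant $\alpha$ provided by Lemma~\ref{l:wahl}. First I would record that since $C$ has geometric genus $g \geq 11$ and its singularities are resolved by a single blow-up at $a \leq 9$ points in general position, the normalization $\bar C$ satisfies the hypotheses of Proposition~\ref{pr:plane-curves} (the genus bound $g \geq 3$ is subsumed, and $g \geq 11$ is what we actually want). That proposition produces a family of dimension $9-a$ of mutually non-isomorphic surfaces in $\P^g$, each having the canonical image $\bar C \subset \P^{g-1}$ as a hyperplane section.

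Next I would argue that such a family of pairwise non-isomorphic surface extensions forces a high-dimensional extension, i.e.\ that $\bar C$ is $(10-a)$-extendable. The cleanest way is to invoke the universal extension machinery: by \ref{sp:univ-ext}/\ref{def:universal} (Corollary~\ref{c:univ-ext}), all surface extensions of $\bar C$ in $\P^g$ other than cones are linear sections of a single arithmetically Gorenstein variety $X \subset \P^{g+\cork(\Phi_{\bar C})-1}$ not a cone, and distinct isomorphism classes of ribbons — equivalently, by Remark~\ref{r:unicita}, distinct surface extensions up to projectivity fixing $\bar C$ — correspond to distinct points of $\P(\ker(\trsp\Phi_{\bar C}))$. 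Hence a family of dimension $9-a$ of mutually non-isomorphic surface extensions gives at least a $(9-a)$-dimensional family of points in $\P(\ker(\trsp\Phi_{\bar C}))$, so $\dim \P(\ker(\trsp\Phi_{\bar C})) \geq 9-a$, that is, $\cork(\Phi_{\bar C}) - 1 \geq 10 - a$… wait — I should be careful here: the naive count gives $\cork(\Phi_{\bar C}) \geq 10 - a$ only after checking that the $(9-a)$-parameter family really sweeps out that many dimensions in the ribbon space and that one has not overcounted by projectivities. This is the one point needing genuine attention.

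The safest route, and the one I expect is intended, bypasses this subtlety by using Lvovski directly. Since the surfaces $S_T$ in Proposition~\ref{pr:plane-curves} vary in a $(9-a)$-dimensional family of non-isomorphic extensions, a standard argument (taking a generic such surface, then iterating the extension, or invoking the universal extension $X$ of Corollary~\ref{c:univ-ext} which contains all of them as linear sections) shows $\bar C \subset \P^{g-1}$ is $(10-a)$-extendable: indeed $\dim X \geq 2 + (9-a) = 11-a$, so $X$ lives in $\P^{g+r}$ with $r \geq 10-a$ and has $\bar C$ as a linear curve section. Whether one needs the hypothesis $\Cliff(\bar C) > 2$ to apply Corollary~\ref{c:univ-ext} is worth flagging; if that hypothesis is not assumed in the corollary's statement, one instead extends a single generic $S_T$ repeatedly, which works since each $S_T$ is not a cone and the family is positive-dimensional. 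Then, identifying $\bar C$ with its canonical model, Lemma~\ref{l:wahl} gives $\alpha(\bar C) = \cork(\Phi_{\bar C})$, and Theorem~\ref{thm:lv} (Lvovski) yields that $\bar C$ being $(10-a)$-extendable implies $\cork(\Phi_{\bar C}) \geq \min(g-1,\,10-a) = 10-a$, the last equality because $g \geq 11 > 10-a$. This proves \eqref{ineq-plane}.

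The main obstacle is the bookkeeping in the previous paragraph: turning "a $(9-a)$-dimensional family of non-isomorphic surface extensions" into "$(10-a)$-extendable" rigorously, including the case where the ambient genus is small relative to $a$ and the case where Lvovski's auxiliary hypothesis $\alpha(X) < n$ must be checked — here $\alpha(\bar C) = \cork(\Phi_{\bar C})$ could in principle be large, but for plane curves one knows (e.g.\ \cite[Thm.~4.8]{wahl90}) the corank is bounded, typically by $10$, so $\alpha(\bar C) < g-1$ holds once $g$ is large enough, and for the remaining small $g$ one uses the direct parameter count instead. Everything else is a routine application of results already established in the excerpt.
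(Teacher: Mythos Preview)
Your direct route via the ribbon space is essentially the paper's argument in the case $\Cliff(\bar C) > 2$: Proposition~\ref{pr:plane-curves} gives a $(9-a)$-dimensional family of pairwise non-isomorphic surface extensions, and Remark~\ref{r:unicita} (which requires $g \geq 11$ and $\Cliff > 2$) identifies these with pairwise distinct points of $\P(\ker(\trsp\Phi_{\bar C}))$, whence $\cork(\Phi_{\bar C}) \geq 10-a$. Your ``wait'' was a slip of the pen: $\dim \P(\ker(\trsp\Phi_{\bar C})) \geq 9-a$ reads $\cork(\Phi_{\bar C}) - 1 \geq 9-a$, which \emph{is} the desired bound. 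The Lvovski detour buys nothing: to manufacture the universal extension $X$ you must invoke Corollary~\ref{c:univ-ext} (again needing $\Cliff > 2$), and its dimension is $\cork(\Phi_{\bar C})+1$ by construction, so ``$\dim X \geq 11-a$'' is literally the same inequality.

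The genuine gap is the case $\Cliff(\bar C) \leq 2$, which the statement does not exclude. Both Remark~\ref{r:unicita} and Corollary~\ref{c:univ-ext} are unavailable there, so neither of your two routes runs. Your fallback ``extend a single generic $S_T$ repeatedly'' does not make sense: the $S_T$ are different surfaces in $\P^g$ through $C$, and nothing in that picture produces a threefold containing a fixed $S_{T_0}$. The Lvovski route also fails, since its side hypothesis $\alpha(\bar C) < g-1$ need not hold (for hyperelliptic $\bar C$ one has $\cork(\Phi_{\bar C}) = 3g-2 > g-1$), and \cite[Thm.~4.8]{wahl90} is about \emph{smooth} plane curves, not normalizations of singular ones. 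The paper closes this gap by a separate case analysis: for $g \geq 11$ a curve with $\Cliff \leq 2$ is hyperelliptic, trigonal, or tetragonal, and one quotes the known corank values $3g-2$, $g+5$, and (at least) $9$ respectively (see \ref{p:plane-curves}), together with the remark that a smooth ($a=0$) tetragonal plane curve would be a quintic, contradicting $g \geq 11$.
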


\vspace{\negvcorr}
\begin{proof}
We first prove \eqref{ineq-plane} under the assumption that
$\Cliff (C)>2$. 
By Proposition~\ref{pr:plane-curves}, there is a family of dimension
$9-a$ of mutually non-isomorphic extensions of the canonical
model of $\bar C$. By Remark~\ref{r:unicita}, these correspond to
mutually non-isomorphic ribbons $\tilde C$ over $\bar C \subset
\P^{g-1}$, and it follows that
$\P (\ker (\trsp\Phi_C))$ has dimension at least $9-a$, see
\ref{p:unobstr-ribbons}.

On the other hand, if $\Cliff (C) \leq 2$, then $\bar C$ is either
hyperelliptic, trigonal, or tetragonal, since $g \geq 11$. In all
these cases, it is known that $\cork (\Phi_{\bar C}) \geq 10 -a$ by
the results quoted in \ref{p:plane-curves}, except possibly if $a=0$
and $\bar C$ is tetragonal; in the latter case $C$ is necessarily a
smooth plane quintic, in contradiction with the assumption $g\geq
11$. 
\end{proof}

\begin{conjecture}[{\cite[p.~80]{wahl90}}]
\label{c:conjW}
Let $S$ be a regular surface. There should exist an integer $g_0$ such
that for every non-singular curve $C \subset S$ of genus $g \geq g_0$,
one has
\[
\cork (\Phi_C)
\geq \h^0 (S, \omega_S ^{-1}).
\]
\end{conjecture}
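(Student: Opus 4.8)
Since Conjecture~\ref{c:conjW} is open in general, the plan is to prove it for Del Pezzo surfaces; this is what Proposition~\ref{pr:wahl-conj} asserts, the case $S=\P^2$ being already known by \cite{wahl90}. So let $S$ be a Del Pezzo surface, set $d=K_S^2$, and recall that $\h^0(S,\omega_S^{-1})=d+1$ by Riemann--Roch and Kodaira vanishing (using $h^1(\O_S)=0$). We must show $\cork(\Phi_C)\geq d+1$ for every smooth $C\subset S$ of genus $g\geq g_0$, and I expect $g_0=11$ to work. First I would dispose of the case $\Cliff(C)\leq 2$: by the classification of curves of small Clifford index, $g\geq 11$ forces $C$ to be hyperelliptic, trigonal, or tetragonal, with $\cork(\Phi_C)$ equal to $3g-2$, to $g+5$, or at least $9$ respectively (the tetragonal bound following from \cite{brawner} and upper-semicontinuity of the corank along the irreducible tetragonal locus); since $d\leq 9$, each of these exceeds $d+1$, and moreover no curve of genus $\geq 11$ with $\Cliff\leq 2$ lies on $\P^2$, so this case is settled.

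For the main case $\Cliff(C)>2$, the plan is to adapt the construction of Proposition~\ref{pr:plane-curves}, replacing the linear system $|\O_{\P^2}(3)|$ by $|\omega_S^{-1}|$. For a general $T\in|\omega_S^{-1}|$---smooth elliptic and meeting $C$ transversally in $\delta:=(-K_S)\cdot C$ points away from any base locus---let $S'_T\to S$ be the blow-up at $C\cap T$. The proper transform $\hat T$ is anticanonical on $S'_T$ and disjoint from the proper transform $\hat C$ of $C$, which is isomorphic to $C$; hence $\omega_S^{-1}$ being anticanonical forces $|K_{S'_T}+\hat C+\hat T|=|\hat C|$, and this system restricts to the complete canonical series on $\hat C\cong C$ because $h^1(\O_{S'_T})=0$. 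Exactly as in the proof of Proposition~\ref{pr:plane-curves}, $|\hat C|$ then defines a birational map onto a surface $\Sigma_T\subset\P^{g-1}$ having the canonical model of $C$ as a hyperplane section, with $\hat T$ contracted to an elliptic Gorenstein singularity. Since $S$ is rational, $\Sigma_T$ is rational, hence is not a cone (a cone over a curve of genus $\geq 1$ is never rational).

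Next I would run the analogue of Lemma~\ref{l:fake-distinct} to see that distinct $T$ give genuinely distinct extensions. The surface $\Sigma_T$ carries $\delta$ lines, the images of the exceptional $(-1)$-curves of $S'_T$, and these meet the canonical curve in precisely the $\delta$ points $C\cap T$; since $-K_S-C$ is not effective (which holds for $g$ large, by the Hodge index theorem applied to the ample class $-K_S$), the map $T\mapsto C\cap T$ is injective, so any projectivity of $\P^{g-1}$ fixing $C$ pointwise and carrying $\Sigma_T$ to $\Sigma_{T'}$ forces $T=T'$. By the unicity statement of Remark~\ref{r:unicita}, the ribbons over the canonical model cut out by the $\Sigma_T$ are therefore pairwise non-proportional, so the induced morphism from the dense open $|\omega_S^{-1}|^{\circ}$ into $\P(\ker(\trsp\Phi_C))$---which by \ref{p:unobstr-ribbons}, Remark~\ref{r:unicita} and Corollary~\ref{c:univ-ext} parametrizes the isomorphism classes of non-cone surface extensions of the canonical model and has dimension $\cork(\Phi_C)-1$---is injective. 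Hence $\cork(\Phi_C)-1\geq \dim|\omega_S^{-1}|=d$, i.e. $\cork(\Phi_C)\geq d+1=\h^0(S,\omega_S^{-1})$, and the conjecture follows in the Del Pezzo case.

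The hard part will be the second half of the construction together with the line-counting argument: one must check that $|\hat C|$ really is birational onto its image with only $\hat T$ contracted (this needs a positivity input on $\hat C+\hat T$, supplied for $\P^2$ inside Proposition~\ref{pr:plane-curves} and requiring the analogous verification here for $g\geq 11$), and, more delicately, one must disentangle the $\delta$ exceptional lines of $\Sigma_T$ from the finitely many lines coming from $(-1)$-curves on $S$ itself---which are fixed, independent of $T$---so that the comparison of the divisors $C\cap T$ and $C\cap T'$ on the canonical curve is legitimate; one also has to treat separately the low-degree cases $d=1,2$, where $|\omega_S^{-1}|$ has a base point or fails to be very ample, but the blow-up construction only uses the pencil/net of anticanonical curves and should still go through. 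I expect the geometry of the lines on $\Sigma_T$ to be the genuine crux, just as Lemma~\ref{l:fake-distinct} is in the plane-curve case.
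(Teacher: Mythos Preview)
Your plan is sound and would work, but the paper takes a much shorter route that you may have overlooked: instead of redoing the whole construction on the Del~Pezzo surface $S$, it simply pushes the curve down to $\P^2$. If $\epsilon:S\to\P^2$ is the blow-up at $a\le 9$ points in general position, then a smooth $C\subset S$ of genus $g\ge 11$ is the normalization of the plane curve $\epsilon(C)$, which has at most $a$ singular points (all among the blown-up points, hence in general position) and is resolved by the single blow-up $\epsilon$. One then quotes Corollary~\ref{c:wahl-conj} directly to get $\cork(\Phi_C)\ge 10-a = h^0(S,\omega_S^{-1})$, and the proof is over in three lines.

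Your construction is in fact the \emph{same} construction, just viewed upstairs: anticanonical curves on $S$ are exactly proper transforms of cubics through the $a$ blown-up points, and your blow-up of $S$ along $C\cap T$ is isomorphic to the blow-up $\tilde\P_T$ of Proposition~\ref{pr:plane-curves}, so your $\Sigma_T$ coincides with the paper's $S_T$. This means the difficulties you anticipate in the last paragraph --- birationality of $|\hat C|$, and disentangling the new exceptional lines from the old $(-1)$-curves on $S$ --- evaporate once you project to $\P^2$: the old $(-1)$-curves are contracted by $\epsilon$, so on the plane model the only lines are the $h-a$ simple exceptional ones, and Lemma~\ref{l:fake-distinct} applies verbatim. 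What your intrinsic approach buys is a formulation that does not require writing $S$ as a blow-up of $\P^2$ (so it would also cover $\P^1\times\P^1$, which the paper's statement excludes), at the cost of having to reprove the analogue of Lemma~\ref{l:fake-distinct} in the presence of the extra lines.
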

\vspace{\mnegvcorr}

\noindent
In the same article, this conjecture is proved for $S=\P^2$
\cite[Thm.~4.8]{wahl90}.

% \par
% Corollary~\ref{c:wahl-conj} solves this conjecture when $S$ is the
% projective plane blown-up at $a \leq 9$ points in general
% position, see Proposition~\ref{pr:wahl-conj} below.
% Actually, the same argument works for 
% any blow-up of the plane having an anticanonical curve $\hat T$ 
% (which is easily seen to have 
% $h^ 0(\mathcal O_{\hat T})=1$), \eg for
% the plane blown-up at any number of points lying on a smooth cubic
% curve $T_0$.
% %and in general position on this cubic.
% We don't dwell on this here.

\begin{proposition}
\label{pr:wahl-conj}
The Wahl Conjecture \ref{c:conjW} holds for any blow-up of the
projective plane having an anticanonical curve, \eg
when $S$ is the projective plane blown-up at $a \leq 9$ points in
general position.
\end{proposition}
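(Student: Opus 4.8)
The plan is to reduce the general case to the case $S = \tilde{\P}$, a blow-up of $\P^2$ at $a \leq 9$ points carrying an anticanonical curve, and then to derive that special case from Corollary~\ref{c:wahl-conj}. First I would observe that for such a surface, $\h^0(S, \omega_S^{-1})$ is computed by Riemann--Roch: since $-K_S$ is the proper transform of a cubic through the $a$ blown-up points, one has $\h^0(S, \omega_S^{-1}) = 10 - a$ (the anticanonical system on $\P^2$ has dimension $9$, and each simple base point imposes one condition; the hypothesis that there is an anticanonical curve guarantees no unexpected drop). So the conclusion to be proved is exactly $\cork(\Phi_C) \geq 10 - a$ for every smooth curve $C$ of large enough genus on $S$.

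Next I would connect a smooth curve $C \subset S = \tilde{\P}$ to a plane curve with ordinary singularities, so that Corollary~\ref{c:wahl-conj} applies. The image $\bar{C}$ of $C$ under the blow-down $\sigma: \tilde{\P} \to \P^2$ is an integral plane curve whose only singularities are at (a subset of) the $a$ blown-up points, and these are resolved by the single blow-up $\sigma$; moreover $C$ is the normalization of $\bar{C}$. The $a$ points are in general position by hypothesis on $S$. Thus, provided $g(C) = g(\bar C)$ is at least $11$, Corollary~\ref{c:wahl-conj} gives $\cork(\Phi_C) = \cork(\Phi_{\bar{C}}) \geq 10 - a$, which is the desired inequality. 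Taking $g_0 = 11$ (or any value $\geq 11$) then settles this case.

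Finally, for a general blow-up $S$ of $\P^2$ carrying an anticanonical curve (with possibly more than $9$ points, or points not in general position), I would argue that $\h^0(S, \omega_S^{-1}) \leq 10 - a'$ where $a'$ is suitably interpreted, or more robustly: any such $S$ dominates, or is dominated by, a configuration to which the previous paragraph applies, and the number $\h^0(S,\omega_S^{-1})$ only goes down under further blow-ups while $\cork(\Phi_C)$ is controlled by the largest sub-blow-up of $\leq 9$ general points contained in $S$. Concretely, if $S$ is obtained from $\P^2$ by blowing up points $p_1, \dots, p_b$ (possibly infinitely near, not necessarily general), then $\h^0(S,\omega_S^{-1}) = \max(0, 10 - b')$ where $b'$ counts the conditions imposed by the $p_i$ on cubics; if this is $0$ the conjecture is vacuous, and otherwise the $p_i$ impose independent conditions, $b' = b \leq 9$, the points are forced to be in general position (anticanonical curve present and smooth generically), and we are back in the situation of the previous paragraph. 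I expect the main obstacle to be the bookkeeping in this last step: making precise that "an anticanonical curve exists" forces the relevant points to impose independent conditions on cubics and hence be in general position in the sense required by Proposition~\ref{pr:plane-curves}, and handling infinitely near points and the degenerate case $\h^0(S,\omega_S^{-1}) = 0$ cleanly so that the statement "blow-up of $\P^2$ having an anticanonical curve" is covered in full generality rather than only the clean Del Pezzo case.
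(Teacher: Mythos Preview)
Your argument for the clean case ($S$ the blow-up of $\P^2$ at $a\leq 9$ points in general position) is essentially the paper's: compute $h^0(S,\omega_S^{-1})=10-a$, push a smooth curve $C\subset S$ of genus $\geq 11$ down to a plane curve $\bar C$ with at most $a$ singular points resolved by a single blow-up, and apply Corollary~\ref{c:wahl-conj}. One small point worth making explicit: $\bar C$ may be singular at only $a'\leq a$ of the blown-up points, so Corollary~\ref{c:wahl-conj} literally gives $\cork(\Phi_C)\geq 10-a'\geq 10-a$; this is of course fine.

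Where you diverge from the paper is the general case (arbitrary blow-up of $\P^2$ admitting an anticanonical curve). You try to \emph{reduce} to the special case by arguing that the existence of an anticanonical curve forces the blown-up points into general position, and you rightly flag this bookkeeping as the weak spot: with infinitely near points, or points on a line, etc., ``general position'' in the sense needed by Proposition~\ref{pr:plane-curves} is not immediate, and your sketch does not really close this. The paper takes the opposite tack: rather than reducing to general position, it observes that the \emph{construction} of Proposition~\ref{pr:plane-curves} itself goes through with $T$ replaced by any anticanonical curve $\hat T$ on $S$ (the key being that $h^0(\O_{\hat T})=1$, so $\hat T$ still contracts to a single elliptic singularity). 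This directly yields an $(h^0(-K_S)-1)$-dimensional family of surface extensions of the canonical model of $C$, and the argument of Corollary~\ref{c:wahl-conj} then gives $\cork(\Phi_C)\geq h^0(-K_S)$ without ever invoking general position. That route avoids the bookkeeping you were worried about.
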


% \subparagraph{Proof}\medskip
% \label{pf:wahl-conj}
\begin{proof}
Let $\epsilon: S \to \P^2$ be the blow-up of $a \leq 9$ points in general
position, and set $g_0=11$. One has $\h^0 (S, \omega_S ^{-1})=10-a$.
Let $C$ be a smooth curve in $S$ of genus $g\geq g_0$;
it is the normalization of $\epsilon(C)$, and the latter has at most
$a$ singular points in general position resolved in one single
blow-up, namely $\restr \epsilon C$.
It thus follows from Corollary~\ref{c:wahl-conj} that
$\cork(\Phi_C) \geq 10-a$
as required.
\par The same argument works for 
any blow-up of the plane having an anticanonical curve $\hat T$ 
(which is easily seen to have $h^ 0(\mathcal O_{\hat T})=1$); we leave
this to the reader.
\end{proof}

% is The Conjecture for $\P^2$ blown-up at $a$ general
% points is proven when one knows that the Wahl map $\Phi_{\bar C}$ of
% the normalization $\bar C$ of a plane curve $C$ with $a$ singular
% points resolved in one single blow-up, and no further singularities,
% has corank greater or equal than $10 -a$. 
% \par Corollary~\ref{c:wahl-conj} proves this in case $\bar C$ has
% Clifford index greater than $2$. If not, we may assume that $\bar C$
% is tetragonal by choosing $g_0 = 11$. 
% We may moreover assume $a \geq 1$ by \cite[Thm.~4.8]{wahl90}.
% Then the result follows from \cite{brawner}, in which it is proven
% that if $\bar C$ is tetragonal then $\cork (\Phi_{\bar C}) \geq 9$.
% \endproof

\begin{remark}
Corollary~\ref{c:wahl-conj} 
and Conjecture~\ref{c:conjW}
contradict
\cite[Theorem~B, (ii)]{kang}, which asserts that the Wahl map of the
normalization of a plane curve of degree $d$ with one node and one
ordinary $(d-5)$-fold point, and no other singularity, has corank
$7$.
% (the latter result would, by the way, disprove Wahl's
% Conjecture~\ref{c:conjW} above).
We double-checked, using cohomological methods, that the corank is
indeed greater or equal than $8$ in this case.
% As we were able to obtain a second proof of the fact that the
% corank of the Wahl map in this case is $2$, using cohomological
% methods, we believe that there has been a computation mistake in 
% \cite{kang}.
% %\cite[Theorem~B, (ii)]{kang} is erroneous.
\end{remark}

\begin{example}
For curves $C$ as in Proposition~\ref{pr:plane-curves}, it is possible
to construct a $(9-a)$-extension containing as linear
sections all the surface extensions constructed in the proof of
Proposition~\ref{pr:plane-curves}. This is the universal extension of
$C$ whenever one has equality in \eqref{ineq-plane},
which happens 
when $C$ is smooth \cite[Thm.~4.8]{wahl90}, or has up
to two nodes \cite{kang}, or in various other cases \cite{kang,
  edoardo-DP1}. 
\par Assume for simplicity that $C$ is smooth, and consider the
product $\P^2 \times \P^9$. It contains 
$\mathcal C = C \times \P^9$ and the universal family of plane
cubics $\mathcal T$ over $\P^9 \cong |\O _{\P^2} (3)|$.
We let $\CT$ be the linear system of hypersurfaces of bidegree $(d,1)$
in $\P^2 \times \P^9$ containing the intersection scheme $\mathcal C
\cap \mathcal T$;
we claim that it defines a birational map, the image of which is the 
extension $X \subset \P^{g+9}$ of $C$ we are looking for.
\par
We first observe that the linear system $\CT$
restricts on the fibres of the second projection to the linear systems
\eqref{eq:ls-plane} defining the various extensions of the canonical
model of $C$.
It follows that it defines a birational map, and that its image has as
linear sections the various surfaces images of the linear systems
\eqref{eq:ls-plane}.
Moreover, it maps $\mathcal {T}$ to a $\P^9$; 
for each surface extension $S_T$ of the canonical model of $C$, this
$\P^9$ image of $\mathcal T$ intersects $\vect {S_T} \cong \P^g$ at one
point, which is the elliptic singularity of $S_T$.
\par
On the other hand, the members of $\CT$ restrict to hyperplanes on the
fibres of the first projection; over a point $p \in C \subset \P^2$,
they all restrict to the same hyperplane of $\P^9 \cong |\O _{\P^2}
(3)|$, namely the one parametrizing plane cubics passing through
$p$. It follows that the birational map defined by $\CT$ contracts
$\mathcal C = C \times \P^9$ to $C$.
We leave the remaining details to the reader.
\end{example}

\begin{closing}
% \bibliographystyle{mysmf-alpha}
% \bibliography{wahl}

\begin{thebibliography}{10}

\bibitem{ab}
E.~Arbarello {\normalfont \smfandname} A.~Bruno, \emph{Rank-two vector bundles
  on {H}alphen surfaces and the {G}auss-{W}ahl map for {D}u {V}al curves}, J.
  \'Ec. polytech. Math. \textbf{4} (2017), 257--285.

\bibitem{abfs}
E.~Arbarello, A.~Bruno, G.~Farkas {\normalfont \smfandname} G.~Sacc\`a,
  \emph{Explicit {B}rill-{N}oether-{P}etri general curves}, Comment. Math.
  Helv. \textbf{91} (2016), no.~3, 477--491.

\bibitem{abs1}
E.~Arbarello, A.~Bruno {\normalfont \smfandname} E.~Sernesi, \emph{On
  hyperplane sections of {K}3 surfaces}, Algebr. Geom. \textbf{4} (2017),
  no.~5, 562--596.

\bibitem{arbsern}
E.~Arbarello {\normalfont \smfandname} E.~Sernesi, \emph{Petri's approach to
  the study of the ideal associated to a special divisor}, Invent. Math.
  \textbf{49} (1978), no.~2, 99--119.

\bibitem{beauville-merindol}
A.~Beauville {\normalfont \smfandname} J.-Y. M\'erindol, \emph{Sections
  hyperplanes des surfaces {$K3$}}, Duke Math. J. \textbf{55} (1987), no.~4,
  873--878.

\bibitem{beauville-fano}
A.~Beauville, \emph{Fano threefolds and {$K3$} surfaces}, in \emph{The {F}ano
  {C}onference}, Univ. Torino, Turin, 2004, 175--184.

\bibitem{brawner}
J.~N. Brawner, \emph{The {G}aussian map {$\Phi_K$} for curves with special
  linear series}, ProQuest LLC, Ann Arbor, MI, 1992, Thesis (Ph.D.)--The
  University of North Carolina at Chapel Hill.

\bibitem{ciliberto83}
C.~Ciliberto, \emph{Sul grado dei generatori dell'anello canonico di una
  superficie di tipo generale}, Rend. Sem. Mat. Univ. Politec. Torino
  \textbf{41} (1983), no.~3, 83--111.

\bibitem{cfgk-adv}
C.~Ciliberto, F.~Flamini, C.~Galati {\normalfont \smfandname} A.~L. Knutsen,
  \emph{Moduli of nodal curves on {$K3$} surfaces}, Adv. Math. \textbf{309}
  (2017), 624--654.

\bibitem{chm88}
C.~Ciliberto, J.~Harris {\normalfont \smfandname} R.~Miranda, \emph{On the
  surjectivity of the {W}ahl map}, Duke Math. J. \textbf{57} (1988), no.~3,
  829--858.

\bibitem{ck14}
C.~Ciliberto {\normalfont \smfandname} A.~L. Knutsen, \emph{On {$k$}-gonal loci
  in {S}everi varieties on general {$K3$} surfaces and rational curves on
  hyperk\"ahler manifolds}, J. Math. Pures Appl. (9) \textbf{101} (2014),
  no.~4, 473--494, with a corrigendum, J. Math. Pures Appl. {\bf 107} (2017)
  no.~5, 665--666.

\bibitem{clm93}
C.~Ciliberto, A.~Lopez {\normalfont \smfandname} R.~Miranda, \emph{Projective
  degenerations of {$K3$} surfaces, {G}aussian maps, and {F}ano threefolds},
  Invent. Math. \textbf{114} (1993), no.~3, 641--667.

\bibitem{cm90}
C.~Ciliberto {\normalfont \smfandname} R.~Miranda, \emph{On the {G}aussian map
  for canonical curves of low genus}, Duke Math. J. \textbf{61} (1990), no.~2,
  417--443.

\bibitem{cm92}
\bysame , \emph{Gaussian maps for certain families of canonical curves}, in
  \emph{Complex projective geometry ({T}rieste, 1989/{B}ergen, 1989)}, London
  Math. Soc. Lecture Note Ser., vol.~179, Cambridge Univ. Press, Cambridge,
  1992, 106--127.

\bibitem{coanda}
I.~Coand{\u a}, \emph{A simple proof of {T}yurin's {B}abylonian tower theorem},
  Comm. Algebra \textbf{40} (2012), no.~12, 4668--4672.

\bibitem{conte-murre}
A.~Conte {\normalfont \smfandname} J.~P. Murre, \emph{On the definition and on
  the nature of the singularities of {F}ano threefolds}, Rend. Sem. Mat. Univ.
  Politec. Torino (1986), no.~Special Issue, 51--67 (1987), Conference on
  algebraic varieties of small dimension (Turin, 1985).

\bibitem{cukierman-ulmer}
F.~Cukierman {\normalfont \smfandname} D.~Ulmer, \emph{Curves of genus ten on
  {$K3$} surfaces}, Compositio Math. \textbf{89} (1993), no.~1, 81--90.

\bibitem{epema}
D.~H.~J. Epema, \emph{Surfaces with canonical hyperplane sections}, CWI Tract,
  vol.~1, Stichting Mathematisch Centrum, Centrum voor Wiskunde en Informatica,
  Amsterdam, 1984.

\bibitem{green84}
M.~Green, \emph{Koszul cohomology and the geometry of projective varieties}, J.
  Differential Geom. \textbf{19} (1984), no.~1, 125--171.

\bibitem{green-lazarsfeld}
M.~Green {\normalfont \smfandname} R.~Lazarsfeld, \emph{Special divisors on
  curves on a {$K3$} surface}, Invent. Math. \textbf{89} (1987), no.~2,
  357--370.

\bibitem{huybrechts}
D.~Huybrechts, \emph{Lectures on {K}3 surfaces}, Cambridge Studies in Advanced
  Mathematics, vol.~158, Cambridge University Press, Cambridge, 2016.

\bibitem{encyclopedia}
V.~A. Iskovskikh {\normalfont \smfandname} Y.~G. Prokhorov, \emph{Algebraic
  geometry. {V}}, Encyclopaedia of Mathematical Sciences, vol.~47,
  Springer-Verlag, Berlin, 1999, Fano varieties.

\bibitem{kang}
E.~Kang, \emph{On the corank of a {W}ahl map on smooth curves with a plane
  model}, Kyushu J. Math. \textbf{50} (1996), no.~2, 471--492.

\bibitem{kemeny}
M.~Kemeny, \emph{The moduli of singular curves on {K}3 surfaces}, J. Math.
  Pures Appl. (9) \textbf{104} (2015), no.~5, 882--920.

\bibitem{knutsen-lopez}
A.~L. Knutsen {\normalfont \smfandname} A.~F. Lopez, \emph{Surjectivity of
  {G}aussian maps for curves on {E}nriques surfaces}, Adv. Geom. \textbf{7}
  (2007), no.~2, 215--247.

\bibitem{lvovsky92}
S.~Lvovski, \emph{Extensions of projective varieties and deformations. {I},
  {II}}, Michigan Math. J. \textbf{39} (1992), no.~1, 41--51, 65--70.

\bibitem{MS78}
S.~Mori {\normalfont \smfandname} H.~Sumihiro, \emph{On {H}artshorne's
  conjecture}, J. Math. Kyoto Univ. \textbf{18} (1978), no.~3, 523--533.

\bibitem{mukai1}
S.~Mukai, \emph{Curves, {$K3$} surfaces and {F}ano {$3$}-folds of genus {$\leq
  10$}}, in \emph{Algebraic geometry and commutative algebra, {V}ol.\ {I}},
  Kinokuniya, Tokyo, 1988, 357--377.

\bibitem{mukai2}
\bysame , \emph{Fano {$3$}-folds}, in \emph{Complex projective geometry
  ({T}rieste, 1989/{B}ergen, 1989)}, London Math. Soc. Lecture Note Ser.,
  vol.~179, Cambridge Univ. Press, Cambridge, 1992, 255--263.

\bibitem{prokhorov}
Y.~G. Prokhorov, \emph{The degree of {F}ano threefolds with canonical
  {G}orenstein singularities}, Mat. Sb. \textbf{196} (2005), no.~1, 81--122.

\bibitem{saint-donat}
B.~Saint-Donat, \emph{Projective models of {$K-3$} surfaces}, Amer. J. Math.
  \textbf{96} (1974), 602--639.

\bibitem{schreyer91}
F.-O. Schreyer, \emph{A standard basis approach to syzygies of canonical
  curves}, J. Reine Angew. Math. \textbf{421} (1991), 83--123.

\bibitem{ilsernesi}
E.~Sernesi, \emph{Deformations of algebraic schemes}, Grundlehren der
  Mathematischen Wissenschaften, vol.~334, Springer-Verlag, Berlin, 2006.

\bibitem{edoardo-DP1}
\bysame , \emph{Gaussian maps of plane curves with nine singular points}, Ann.
  Univ. Ferrara Sez. VII Sci. Mat. \textbf{63} (2017), no.~1, 201--210.

\bibitem{stevens}
J.~Stevens, \emph{Deformations of singularities}, Lecture Notes in Mathematics,
  vol.~1811, Springer-Verlag, Berlin, 2003.

\bibitem{treibich}
A.~Treibich, \emph{Rev\^etements tangentiels et condition de
  {B}rill-{N}oether}, C. R. Acad. Sci. Paris S\'er. I Math. \textbf{316}
  (1993), no.~8, 815--817.

\bibitem{voisin88}
C.~Voisin, \emph{Courbes t\'etragonales et cohomologie de {K}oszul}, J. Reine
  Angew. Math. \textbf{387} (1988), 111--121.

\bibitem{voisin-acta}
\bysame , \emph{Sur l'application de {W}ahl des courbes satisfaisant la
  condition de {B}rill-{N}oether-{P}etri}, Acta Math. \textbf{168} (1992),
  no.~3-4, 249--272.

\bibitem{wahl83}
J.~Wahl, \emph{A cohomological characterization of {${\bf P}^{n}$}}, Invent.
  Math. \textbf{72} (1983), no.~2, 315--322.

\bibitem{wahl87}
\bysame , \emph{The {J}acobian algebra of a graded {G}orenstein singularity},
  Duke Math. J. \textbf{55} (1987), no.~4, 843--871.

\bibitem{wahl88}
\bysame , \emph{Deformations of quasihomogeneous surface singularities}, Math.
  Ann. \textbf{280} (1988), no.~1, 105--128.

\bibitem{wahl90}
\bysame , \emph{Gaussian maps on algebraic curves}, J. Differential Geom.
  \textbf{32} (1990), no.~1, 77--98.

\bibitem{wahl97}
\bysame , \emph{On cohomology of the square of an ideal sheaf}, J. Algebraic
  Geom. \textbf{6} (1997), no.~3, 481--511.

\bibitem{zak}
F.~L. Zak, \emph{Some properties of dual varieties and their applications in
  projective geometry}, in \emph{Algebraic geometry ({C}hicago, {IL}, 1989)},
  Lecture Notes in Math., vol.~1479, Springer, Berlin, 1991, 273--280.

\end{thebibliography}

% debut wahl.bbl
\providecommand{\bysame}{\leavevmode\hbox to3em{\hrulefill}\thinspace}
\providecommand{\og}{``}
\providecommand{\fg}{''}
\providecommand{\smfandname}{and}
\providecommand{\smfedsname}{eds.}
\providecommand{\smfedname}{ed.}
\providecommand{\smfmastersthesisname}{M\'emoire}
\providecommand{\smfphdthesisname}{Th\`ese}

% fin bbl

\medskip\noindent
Ciro Ciliberto.
Dipartimento di Matematica.
Università degli Studi di Roma Tor Vergata.
Via della Ricerca Scientifica,
00133 Roma, Italy.
\texttt{cilibert@mat.uniroma2.it}

\medskip\noindent
Thomas Dedieu.
Institut de Mathématiques de Toulouse~; UMR5219.
Université de Toulouse~; CNRS.
UPS IMT, F-31062 Toulouse Cedex 9, France.
\texttt{thomas.dedieu@math.univ-toulouse.fr}

\medskip\noindent
Edoardo Sernesi.
Dipartimento di Matematica e Fisica.
Università Roma Tre.
Largo S.L.\ Murialdo 1,
00146 Roma, Italy.
\texttt{sernesi@mat.uniroma3.it}

\renewcommand{\thefootnote}{}
\footnotetext
{CC and ThD were membres of project FOSICAV, which 
has received funding from the European Union's Horizon
2020 research and innovation programme under the Marie
Sk{\l}odowska-Curie grant agreement No~652782.}
\end{closing}

\end{document}